\author{Jean Barbet-Berthet}
\thanks{We would like to thank the IRMA of Strasbourg (France) for granting us access to their scientific library}
\title{Algebraic equivarieties over a commutative field}
\theoremstyle{plain}							
\newtheorem{thm}{\textbf{Theorem}}[section]
\newtheorem{prop}[thm]{Proposition}
\newtheorem{cor}[thm]{Corollary}
\newtheorem{lem}[thm]{Lemma}
\theoremstyle{definition}						
\newtheorem{defi}[thm]{Definition}
\newtheorem{rem}[thm]{Remark}
\newtheorem{ex}[thm]{Example}
\theoremstyle{remark}							
\renewcommand{\and}{\wedge}						
\newcommand{\onto}{\twoheadrightarrow}			
\newcommand{\into}{\hookrightarrow}				
\newcommand{\xx}{\times}
\renewcommand{\phi}{\varphi}					
\newcommand{\ms}{\mathscr}						
\newcommand{\mf}{\mathfrak}						
\newcommand{\mbb}{\mathbb}						
\newcommand{\ov}{\overline}						
\newcommand{\m}[1]{\mathsf{#1}} 				
\newcommand{\wt}{\widetilde}					
\newcommand{\subs}{\subseteq}
\renewcommand{\m}{\mf m}							
\renewcommand{\>}{\rangle}
\renewcommand{\o}{\otimes}								
\renewcommand{\O}{\ms O}								
\begin{document}
\renewcommand{\proofname}{Proof}

\maketitle

\begin{abstract}
We expand our previously founded basic theory of equiresidual algebraic geometry over an arbitrary commutative field, to a well-behaved theory of (equiresidual) algebraic varieties over a commutative field, thanks to the generalisation of affine algebraic geometry by the use of canonical localisations and $*$-algebras. We work here in an equivalent and more suggestive \og concrete" setting with structural sheaves of functions into the base field, which allows us to give a set-theoretic description of the products of equivarieties in general. Locally closed subvarieties are naturally equipped with an equivariety structure as in the particular case of algebraically closed fields, and this allows in particular to embed all quasi-projective (equi)varieties in general into the category of algebraic (equi)varieties.
\end{abstract}

\section{Introduction}
In \cite{EQAG}, we have laid the very first foundations for algebraic geometry over any commutative field, thereby generalising the situation over an algebraically closed field in an affine setting. In particular, we have proved an equiresidual generalisation of Hilbert's Nullstellensatz (the \og Äquinullstellensatz, Theorem 2.4), we have characterised the local sections of the sheaf of regular functions over any subvariety of an affine space (\cite{EQAG}, Theorem 3.17), and we have established a duality between affine algebraic equivarieties and a perfectly identifed category of \og affine $*$-algebras" (\cite{EQAG}, Theorem 4.15). In the present work we finish laying the first foundations of equiresidual algebraic geometry by extending the affine theory to a theory of \emph{algebraic equivarieties}, which generalise algebraic varieties (in the sense of locally ringed spaces locally isomorphic to \emph{maximal} spectra of affine algebras) in the \og absolute case" (i.e. over algebraically closed fields), and encompass all quasi-projective (equi)varieties over an arbitrary commutative field, i.e. isomorphic to locally closed subvarieties of projectives spaces. This establishes the sturdiness of the basic theory, especially manifest in the non-trivial use of canonical localisations in the structure theorem for projective spaces (Theorem \ref{PROJSP}). The connecting thread of the development is the use of products of equivarieties in order to characterise the \emph{separatedness} of an equivariety by the closedness of the diagonal in its square (Proposition \ref{MAG.4.24}), closely following the perspective of \cite{MAG}, Chapter 4.\\
In section \ref{OPAFF}, we define the product of two affine algebraic equivarieties, by using the duality between affine (algebraic) equivarieties and affine $*$-algebras of \cite{EQAG} and introducing the sum of affine $*$-algebras. We provide a concrete description of equivarieties in  general which allows us to give an explicit definition of the \emph{Zariski} topology on the set-theoretic product of two affine equivarieties, as well as an explicit description of a sheaf over this product, which make it a concrete product of equivarieties.
In section \ref{PRODEQ}, we use the product of affine algebraic equivarieties in order to define a concrete product of equivarieties in general, thanks to a glueing property of affine equivarieties. 
In section \ref{SUBVAR}, we introduce embeddings, immersions and subvarieties of concrete equivarieties in general, with a concrete description of the restriction of the structural sheaf to subvarieties. We consider the notion of a locally closed subvariety, which is naturally equipped with the structure of an equivariety. These appear ubiquitary as open or closed subvarieties and graphs of regular maps, and will be our basic concept for understanding quasi-projective equivarieties.
In section \ref{SEPALG}, we introduce the separation property for equivarieties, which is as in the absolute case a well-known analogue of the Hausdorff property for topological spaces, and the algebraic equivarieties. It is characterised, thanks to the product of equivarieties, by the closedness character of the diagonal subvariety of the square. We also give a local characterisation of separatedness which will be useful for the study of projective spaces.
In section \ref{LOCVAR}, we show that projectives spaces are algebraic equivarieties, thereby generalising the usual description over algebraically closed fields. We also show in general that locally closed subvarieties of separated (algebraic) equivarieties are separated (algebraic), which allows us to establich that all quasi-projective equivarieties are algebraic (Theorem \ref{LOCSEP}).\\

We adopt the conventions and notations of \cite{EQAG}. As already mentionned, the present theory closely follows Milne's synthesis on the subject over algebraically closed fields in \cite{MAG}, and heavily relies on generalisations of his Chapter 4.

\section{Products of affine algebraic equivarieties}\label{OPAFF}
\subsection*{Sums of affine $*$-algebras}
We want to define the product of two affine (algebraic) equivarieties. For this purpose, we may use the duality between affine equivarieties over $k$ and affine $*$-algebras (\cite{EQAG}, Theorem 4.15), if we can identify the sum of two affine $*$-algebras. In fact, we have the

\begin{prop}\label{MAG.4.15}
If $A$ and $B$ are two affine $*$-algebras over $k$, then $A\o_k B$ is special and so is $A*B=(A\o_k B)_M$.
\end{prop}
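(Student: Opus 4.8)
The plan is to bootstrap from the finitely generated case and from the algebraically closed base, where the corresponding statement is classical. Recall from \cite{EQAG} the notion of a special $k$-algebra and the facts that the canonical localisation $R\mapsto R_M$ is well behaved on, and preserves, special algebras, that it sends them to affine $*$-algebras and is idempotent, and that every affine $*$-algebra is in particular of finite type up to canonical localisation (equivalently, its residue fields at closed points are finite extensions of $k$). Thus $A$ and $B$, being affine $*$-algebras, are canonical localisations of finitely generated $k$-algebras $A_0$ and $B_0$.

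First I would note that $A\o_k B$ inherits the finiteness part of speciality: it is a localisation of the finitely generated $k$-algebra $A_0\o_k B_0$, namely $A\o_k B\cong(A_0\o_k B_0)_U$ with $U$ the multiplicative set generated by the images of the two canonical localising systems of $A$ and $B$. Since $k\into\ov k$ is faithfully flat and $\ov k$ is perfect, base change gives $(A\o_k B)\o_k\ov k\cong(A\o_k\ov k)\o_{\ov k}(B\o_k\ov k)$, which lets one control the nilradical of $A\o_k B$ through the $\ov k$-fibre ($f$ is nilpotent, resp.\ zero, in $A\o_k B$ iff it is so after base change).

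The substance of the proof is the remaining, Nullstellensatz-flavoured requirement in the definition of speciality — informally, that the nilradical of $A\o_k B$ be cut out by its closed points, so that after reducing the ring embeds into the product of its residue fields. Here I would descend along $k\into\ov k$: whether a given element of $A\o_k B$ vanishes at all closed points, and the finiteness of the residue fields involved, are detected over $\ov k$, where $(A\o_k\ov k)\o_{\ov k}(B\o_k\ov k)$ is a localisation of a finitely generated $\ov k$-algebra and the assertion is the classical one for a product of affine algebraic sets over $\ov k$ (Hilbert's Nullstellensatz together with the Jacobson property of finitely generated $\ov k$-algebras). I expect the main obstacle to lie exactly here: since $A$ and $B$ are genuine localisations one cannot simply invoke that a finitely generated $k$-algebra is a Jacobson ring (localisations of Jacobson rings need not be Jacobson), so one must carry the multiplicative set $U$ through the base change and use crucially that the localising systems of $A$ and $B$ are the \emph{canonical} ones — consisting of elements invertible at every closed point — in order to conclude that the $U$-maximal primes of $A_0\o_k B_0$ still collectively cut out its localised nilradical.

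Once $A\o_k B$ is known to be special, the second assertion is formal: by the recalled properties of the canonical localisation, $A*B=(A\o_k B)_M$ is then an affine $*$-algebra, in particular special, and by idempotence of $(-)_M$ it is genuinely of the stated form. This is what is needed to identify $A*B$, in the next step, as the sum of $A$ and $B$ in the category of affine $*$-algebras, and hence, via the duality of \cite{EQAG}, Theorem 4.15, as the affine $*$-algebra dual to the product of the two affine equivarieties.
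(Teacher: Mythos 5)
Your plan takes a genuinely different route from the paper's, and it does not reach the destination: the central step is both left unresolved (you yourself flag it as \og the main obstacle") and, more seriously, misdirected. The property \emph{special} in \cite{EQAG} is an equiresidual condition: it is characterised (Lemma 3.7 of \cite{EQAG}) by the vanishing of normic expressions $D^\#(\ov\beta,\alpha)$ forcing $\alpha=0$, and for affine $*$-algebras it amounts to the $k$-\emph{rational} points (the maximal ideals $\m$ with $A/\m\cong k$) separating elements. This is not the classical statement that a reduced finitely generated algebra embeds into the product of its residue fields, and it is not detected by base change to $\ov k$: an element of $A\o_k B$ can vanish at every $k$-point without vanishing at every $\ov k$-point of $(A\o_k\ov k)\o_{\ov k}(B\o_k\ov k)$, so the classical Nullstellensatz over $\ov k$ gives information in the wrong direction --- you need to pass from \og vanishes at all $k$-points" to \og equals $0$", whereas the $\ov k$-fibre only controls the larger set of $\ov k$-points. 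Moreover the normic forms that define speciality over $k$ trivialise over $\ov k$, so nothing about the condition survives the base change. The whole point of the equinullstellensatz of \cite{EQAG} is that this $k$-rational separation holds after canonical localisation, and faithfully flat descent along $k\into\ov k$ is not a substitute for it.

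The paper's proof is far more elementary and uses exactly the ingredient your plan never invokes: the equiresidual property of affine $*$-algebras, namely that the structural map $k\to A/\m$ is an isomorphism for \emph{every} maximal ideal $\m$ of $A$. One writes $\alpha=\sum_i a_i\o b_i$ with the $b_i$ linearly independent over $k$; each maximal ideal $\m$ of $A$ then gives a specialisation $A\o_k B\to k\o_k B\cong B$; speciality of $B$ forces $\sum_i[a_i]b_i=0$, hence $[a_i]=0$ for all $i$ by linear independence, so each $a_i$ lies in every maximal ideal of $A$, and speciality of $A$ (via Lemma 4.13 of \cite{EQAG}) gives $a_i=0$, whence $\alpha=0$. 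This is Milne's argument for tensor products of affine algebras over an algebraically closed field, transported verbatim because equiresiduality replaces the Nullstellensatz's identification $A/\m=k$. Your closing paragraph --- deducing speciality of $A*B=(A\o_k B)_M$ from that of $A\o_k B$ by the stability of speciality under canonical localisation (Proposition 3.9 of \cite{EQAG}) --- is correct and matches the paper; it is only the main step that needs to be redone along the lines above.
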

\begin{proof}
We adapt and generalise the proof of \cite{MAG}, Proposition 4.15. We want to show, by \cite{EQAG}, Lemma 3.7, that for all $D(\ov x)\in \ms D$ and $\ov\beta,\alpha\in A\o_k B$ such that $D^\#(\ov \beta,\alpha)=0$, we have $\alpha=0$. Write $\alpha=\sum_{i=1}^n a_i\o b_i\in A\o_k B$ : we may always rewrite this expression in such a way that the $b_i$'s are linearly independent over $k$ (see \cite{MAG}). If $\m$ is a maximal ideal of $A$, we consider the canonical map $A\to A/\m$, $a\mapsto [a]$ : as $A$ is an affine $*$-algebra, the structural morphism $k\to A/\m$ is an isomorphism, and identifying those two fields we get a map $A\o_k B\to k\o_k B\cong B$, $a\o b\mapsto [a]\o b\mapsto [a] b$. By hypothesis, we have $D^\#([\ov \beta],[\alpha])=0$ in $B$, and as $B$ is special, we must have $[\alpha]1=\sum_i [a_i]b_i=0$ by \cite{EQAG}, Lemma 3.7; as the $b_i$'s are linearly independent over $k$, we get $[a_i]=0$ for all $i$ : all the $a_i$'s are in every maximal ideal of $A$, so as $A$ is itself special, by Lemma 4.13 of \cite{EQAG} we get $a_i=0$ for all $i$, whence $\alpha=0$ : by \cite{EQAG}, Lemma 3.7, $A\o_k B$ is special. Now $A*B=(A\o_k B)_M$ is special by \cite{EQAG}, Proposition 3.9.
\end{proof}
\begin{rem}
Using similar ideas and the characterisation of irreducible affine algebraic equivarieties by their algebras of global sections being integral domains, it is easy to check that if $A$ and $B$ are integral affine $*$-algebras, then $A*B$ is integral as well.
\end{rem}

\noindent If $A$ and $B$ are two $*$-algebras over $k$, we will from now on write $A*B:=(A\o_k B)_M$. By the universal properties of the tensor product and the canonical localisation, it is obviously a sum in the category of $*$-algebras; by Proposition \ref{MAG.4.15} if $A$ and $B$ are affine $*$-algebras, then $A*B$ is affine as well, so that $A*B$ is the sum of $A$ and $B$ in the category $*Aff_k$. Now consider two affine equivarieties $(V,\O_V)$ and $(W,\O_W)$ over $k$ : by definition, $J(V)=\Gamma(V,\O_V)$ and $J(W)=\Gamma(W,\O_W)$ are affine $*$-algebras over $k$ and by what precedes, 
their sum $J(V)*J(W)=(J(V)\o_k J(W))_M$ gives us a direct construction of the product of $V$ and $W$ in $EVar^a_k$ : by duality, thanks to \cite{EQAG}, Proposition 4.9, $Spm\ J(V) * J(W)$, with its natural structural sheaf, is such a product.

\subsection*{Concrete equivarieties}
However, we want to give a more workable description of a product of $V$ and $W$, with underlying set the Cartesian product $V\xx W$. It will technically be most convenient to restrict ourselves to the following general kind of equivarieties.
\begin{defi}
i) Say that an equivariety $(V,\O_V)$ over $k$ is \emph{concrete}, if for every open subset $U\subs V$, the $k$-algebra $\O_V(U)$ is a subset of the $k$-algebra $k^U$ of all functions $U\to k$.\\
ii) If $V$ and $W$ are concrete equivarieties, say that a map $f:V\to W$ is \emph{regular}, if it is continuous and for every open subset $U\subs W$ and every $s\in \O_W(U)$, we have $s\circ f\in \O_V(f^{-1} U)$.
\end{defi}
\begin{rem}
i) By definition of an equivariety, for every open subset $U$ of a concrete equivariety $V$ the members of $\O_V(U)$ are \emph{continuous} functions.\\
ii) By extension, if $f:V\to W$ is a regular map of concrete equivarieties, we will implicitly consider $f$ as the morphism of equivarieties $(f,f^\#)$, where $f^\#:\O_V\to f_*\O_V$ is defined by $f^\#_U:s\in \O_W(U)\mapsto s\circ f$ for every open $U\subs W$.
\end{rem}

\begin{lem}\label{OPENBASIS}
If $V$ is an equivariety, then the affine open subsets of $V$ are a basis of open subsets of $V$.
\end{lem}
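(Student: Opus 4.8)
The plan is to reduce the statement to the affine case and then to invoke the fact, which in this generality rests on the theory of canonical localisations, that the principal open subsets of an affine equivariety are again affine.

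Concretely, let $W\subs V$ be open and $x\in W$; we must exhibit an affine open subset $U'$ of $V$ with $x\in U'\subs W$. By the definition of an equivariety, $x$ has an affine open neighbourhood $U$ in $V$; since $W\cap U$ is open in $U$ and, by transitivity of the restriction of the structural sheaf, an affine open subset of $U$ is also an affine open subset of $V$, it suffices to find an affine open subset of $U$ containing $x$ and contained in $W\cap U$. So we may assume from the outset that $V$ is affine, say $V=Spm\ A$ with $A=\Gamma(V,\O_V)$ an affine $*$-algebra, and that $x\in W\subs V$ is open.

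Next I would use that the principal open subsets $D(f)=\{\m\in Spm\ A:f\notin\m\}$, for $f$ ranging over $A$, form a basis of the Zariski topology on $Spm\ A$ (\cite{EQAG}); hence there is $f\in A$ with $x\in D(f)\subs W$. It then remains to observe that $D(f)$, equipped with the restriction of $\O_V$, is itself an affine equivariety: by \cite{EQAG} it is isomorphic to $Spm$ of a canonical localisation of $A$ (obtained from the ordinary localisation of $A$ at $f$), which is again an affine $*$-algebra. Thus $D(f)$ is an affine open subset of $V$ with $x\in D(f)\subs W$, which is what we wanted.

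The only step that is not purely formal is this last one: over an arbitrary commutative field one cannot simply invert $f$ in the ring of sections, and the assertion that $D(f)$ carries an affine equivariety structure genuinely depends on the properties of canonical localisations and $*$-algebras from \cite{EQAG}. Once that input (together with the routine verification that the $D(f)$ form a topology basis) is granted, the reduction to the affine case makes the lemma immediate.
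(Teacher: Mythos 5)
Your argument is correct and is essentially the same as the paper's: both reduce to an affine open neighbourhood of the point and then use that the basic open subsets $D(f)$ of an affine equivariety form a basis and are themselves affine (via canonical localisations, i.e.\ \cite{EQAG}, Lemma 3.15). The paper merely phrases this last step as the fact that every open subset of an affine algebraic equivariety is a finite union of affine open subsets, which is the same input.
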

\begin{proof}
Let $U\subs V$ be an open subset, and $P\in U$ : by definition of an equivariety, there exists an open neighbourhood $O$ of $P$ in $V$ such that $(O,\O_V|_O)$ is an affine algebraic equivariety. Now we have $P\in O\cap U$, an open subset of $O$ : as such $O\cap U$ is a finite union of open affine subsets of $O$ (this is true for subvarieties of affine spaces, so also for every affine algebraic equivariety), and thus of $V$, so there exists an affine open subset $W\subs V$ such that $P\in W\subs O\cap U\subs U$, and therefore the affine open subsets of $V$ are a basis of open subsets.
\end{proof}

\noindent As it is implicit in the definition of an equivariety, the restriction to concrete ones is not an essential one, thanks to the following

\begin{prop}\label{CONCEQ}
Every morphism of concrete equivarieties is a regular map (and reciprocally), and every equivariety over $k$ is naturally isomorphic to a concrete equivariety.
\end{prop}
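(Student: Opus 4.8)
The plan is to prove the statement in two parts, corresponding to its two clauses.

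For the first part — that morphisms of concrete equivarieties coincide with regular maps — I would argue as follows. Given a morphism $(f,f^\#):(V,\O_V)\to(W,\O_W)$ of concrete equivarieties, $f$ is by definition continuous, so it remains to check that $f^\#_U$ is precisely $s\mapsto s\circ f$ for every open $U\subs W$. The point is that a morphism of locally ringed spaces is compatible with the evaluation-at-a-point maps: for $P\in V$, the induced map on stalks $f^\#_P:\O_{W,f(P)}\to\O_{V,P}$ is local, hence descends to the residue fields, which in the concrete (and equiresidual) setting are canonically identified with $k$ via evaluation; compatibility of $f^\#$ with these evaluations forces $(f^\#_U(s))(P)=s(f(P))$ for all $P\in f^{-1}U$, i.e. $f^\#_U(s)=s\circ f$. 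Conversely, a regular map $f$ gives, by the second bullet of the preceding Remark, a well-defined morphism $(f,f^\#)$ with $f^\#_U(s)=s\circ f$; one must only check this is a morphism of \emph{locally} ringed spaces, which again follows because evaluation at $P$ generates the maximal ideal of each stalk, so $f^\#_P$ is automatically local.

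For the second part — that every equivariety is isomorphic to a concrete one — I would use Lemma \ref{OPENBASIS} to reduce to the affine case. An affine algebraic equivariety is by construction (a subvariety of an affine space, hence) concrete: its sections are honest $k$-valued functions, and this is exactly the content of \cite{EQAG}, Theorem 3.17 characterising local sections of the sheaf of regular functions. For a general equivariety $(V,\O_V)$, cover $V$ by affine opens $\{V_i\}$; each $\O_V|_{V_i}$ is already a sheaf of $k$-valued functions on $V_i$. I would then replace $\O_V$ by the subsheaf $\O_V'\subs k^V$ defined on an open $U$ by the sections $U\to k$ that restrict, on each $U\cap V_i$, to an element of $\O_V(U\cap V_i)$ under the concrete identification; the canonical map $\O_V\to\O_V'$ is then an isomorphism of sheaves by checking on the affine basis and using that both are sheaves (so sections glue), giving an isomorphism of ringed spaces $(V,\O_V)\cong(V,\O_V')$ with the latter concrete. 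One should check this identification is independent of the chosen affine cover, which follows from the affine case applied to refinements.

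**The main obstacle** I anticipate is making rigorous the identification of residue fields $\O_{V,P}/\m_P$ with $k$ via evaluation, and verifying that this identification is natural enough that morphisms respect it — this is where the \emph{equiresidual} hypothesis (the structural morphism $k\to\O_{V,P}/\m_P$ being an isomorphism, as used in the proof of Proposition \ref{MAG.4.15}) does the real work, and where care is needed to ensure that ``evaluation of a section at a point'' is intrinsic and not an artifact of a presentation. Once this naturality is in place, both halves of the proposition are essentially formal; the second clause is then a routine, if slightly tedious, gluing argument along the affine basis furnished by Lemma \ref{OPENBASIS}.
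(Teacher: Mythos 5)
Your first clause is handled exactly as in the paper: you pass to stalks, use locality of $f^\#_P$ together with the equiresidual identification of the residue fields with $k$, and deduce $f^\#_U(s)(P)=s(f(P))$ pointwise. This is the paper's argument verbatim in outline, and your converse direction (checking that $s\mapsto s\circ f$ yields a \emph{local} morphism of ringed spaces) is a reasonable supplement to what the paper relegates to a remark.

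In the second clause there is a slip that matters. You assert that an affine algebraic equivariety ``is by construction (a subvariety of an affine space, hence) concrete'' and that each $\O_V|_{V_i}$ ``is already a sheaf of $k$-valued functions''. That is not so: an affine algebraic equivariety is only \emph{isomorphic} to a subvariety of some $k^n$, and its sections are abstract ring elements until one transports them along a chosen chart $\phi:U\cong V_0\subs k^n$ by setting $\O'_V(U)=\{s\circ\phi : s\in\O_{V_0}(V_0)\}$. (If affine equivarieties were literally concrete, the second clause would be nearly vacuous.) The real content of this half of the proof, which the paper spells out, is that this algebra of functions does not depend on the chosen chart: for two charts $\phi:U\cong V_0$ and $\psi:U\cong W_0$ one uses the \emph{first} clause of the proposition to see that $\phi\circ\psi^{-1}$ is regular, whence $\{s\circ\phi\}=\{t\circ\psi\}$. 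Your closing remark about ``independence of the chosen affine cover\ldots by refinements'' addresses a different (and easier) issue; the independence you actually need is of the chart on a fixed affine open, and it is precisely here that the two clauses of the proposition interlock. Once that verification is inserted, your gluing of $\O'_V$ over the basis of affine opens from Lemma \ref{OPENBASIS} and the resulting isomorphism $\O_V\cong\O'_V$ proceed as in the paper.
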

\begin{proof}
As for the first claim, let $(f,f^\#):V\to W$ be a morphism of equivarieties, i.e. of locally ringed spaces in $k$-algebras, with $V$ and $W$ concrete equivarities over $k$. By hypothesis, $f$ is continuous, and we want to show that for every open $U\subs W$, $f^\#_U(s)=s\circ f$; as both are functions $U\to k$, it suffices to show that for every $P\in U$, we have $f^\#_U(s)(P)=s\circ f(P)$. For $P\in U$, write $Q=f(P)$ : we know that the induced $k$-morphism $f^\#_P:\O_{W,Q}\to \O_{V,P}$ is local, and by definition we have $f^\#_P([s,U]_Q)=[f^\#_U(s),f^{-1}U]_P$. Let $i_P:k\to\O_{V,P}$ and $i_Q:k\to \O_{W,Q}$ be the structural morphisms on the stalks, and $j_P:k\cong\ov{\O_{V,P}}$ and $j_Q:k\cong \ov{\O_{W,Q}}$ the associated residual structural isomorphisms, so that $f^\#_P\circ i_Q=i_P$ and $\ov{f^\#_P}\circ j_Q=j_P$, for $\ov{f^\#_P}:\ov{\O_{W,Q}}\to\ov{\O_{V,P}}$ the residual $k$-isomorphism. We have $j_P(f^\#_U(s)(P))=\ov{[f^\#_U(s),f^{-1}U]_P}=\ov{f^\#_P}(\ov{[s,U]_Q})$ (by definition of $\ov{f^\#_P}$) $=\ov{f^\#_P}\circ j_Q(s(f(P)))=j_P(s\circ f(P))$, whence $f^\#_U(s)(P)=s\circ f(P)$, so $f^\#_U(s)=s\circ f$, and therefore $(f,f^\#)$ is regular.
As for the second claim, let $(V,\O_V)$ be an equivariety over $k$. By definition, $V$ has a basis of \og affine" open subsets, so let $U\subs V$ be an affine open subset : there exists an isomorphism $\phi:U\cong V_0\subs k^n$ with an affine subvariety, and we let $\O_V'(U):=\{s\circ \phi : s\in\O_{V_0}(V_0)\}$, a $k$-algebra of functions $U\to k$. We first check that this is well defined, so let $\psi:U\cong W_0\subs k^m$ be another isomorphism with an affine subvariety, and let $s\circ \phi\in \O_V'(U)$ : we have $t:=s\circ \phi\circ \psi^{-1}:W_0\to U\to V_0\to k$, and as $\phi\circ \psi^{-1}$ is an isomorphism and $s\in \O_{V_0}(V_0)$, by the first part of the proof we have $t\in \O_{W_0}(W_0)$, because $\phi\circ \psi^{-1}$ is regular. It follows that $t\circ \psi=s\circ \phi$, so $s\circ\phi\in \{t\circ \psi : t\in \O_{W_0}(W_0)\}$ and by symmetry, we get $\{s\circ \phi : s\in \O_{V_0}(V_0)\}=\{t\circ \psi : t\in \O_{W_0}(W_0)\}$, so that $\O_V'(U)$ is well defined. Now if $U\subs V$ is any open subset, write $U=\bigcup_i U_i$ as the union of \emph{all} affine open subsets $U_i$ of $U$, and let $\O_V'(U):=\{f:U\to k :\forall i,f|_{U_i}\in \O_V'(U_i)\}$ : this is well defined, and $\O'_V$ is obviously a sheaf of continuous functions. Indeed, if $U'\subs U$ is an open subset, then for the cover $U'=\bigcup_j U'_j$ of $U'$ by all affine open subsets, each $U'_j$ is one of the $U_i$'s, so the restriction $f|_{U'}$ is in $\O'_V(U')$; and as the presheaf $\O'_V$ is defined locally by concrete maps, it is clearly a sheaf. It remains to prove that $\O_V'\cong \O_V$ and it suffices to describe an isomorphism on the affine open subsets of $V$, so let $U\subs V$ be any such with $\phi:U\cong V_0\subs k^n$ a local chart. To $s\in \O_V(U)$ we associate $(\phi^\#_{V_0})^{-1}(s)\circ \phi\in \O_V'(U)$, and to every $t\circ \phi\in \O_V'(U)$ we associate $\phi_{V_0}^\#(t)\in \O_V(U)$; as $\phi$ is an isomorphism, this is well defined, and both maps are obviously mutually inverse $k$-isomorphisms. 
\end{proof}

\subsection*{The Zariski topology on the product}
Let $V$ be a \emph{concrete} affine equivariety over $k$ : there exists an isomorphism $\phi:V\cong W$ with an affine algebraic subvariety $W$ of $k^n$ say, whence an isomorphism $J(W)=\Gamma(W,\O_W)\cong \Gamma (V,\O_V)=J(V)$. It follows that the closed subsets of $V$ are the zero sets of subsets of $J(V)$, because this is the case for $W$. More precisely, if $F\subs W$ is a closed subset, there exists a subset $S$ of $k[W]$, such that $F=\ms Z_W(S)$ and as $k[W]$ embeds into $J(W)$, we may assume that $S\subs J(W)$ and $Z=\{P\in W :\forall f\in S,f(P)=0\}$; conversely, if $S\subs J(W)$, the set $\ms Z_W(S)=\{P\in W : \forall f\in S,f(P)=0\}$ is closed in $W$, by continuity of the elements of $J(W)$ (or alternatively because we have a natural $k$-isomorphism $J(W)\cong k[W]_M$). Transposing through $\phi$, we see that the closed subsets of $V$ have the same description.\\
\noindent Now let $W$ be another concrete affine equivariety. We define on the set-theoretic product $V\xx W$ the \emph{Zariski topology} as follows. The closed subsets of $V\xx W$ are taken as the sets of zeros of maps $f:V\xx W\to k$ of the form $(P,Q)\mapsto \sum_i g_i(P)h_i(Q)$ with $g_i\in J(V),h_i\in J(W)$. Such sets are obviously closed under arbitrary intersections, whereas if $X,Y\subs V\xx W$ are defined by $X=\ms Z_{V\xx W}(S)=\{(P,Q)\in V\xx W :\forall f\in S,f(P,Q)=0\}$ and $Y=\ms Z_{V\xx W}(T)$, with $S$ and $T$ sets of functions of the preceding form, we clearly have $X\cup Y=\ms Z_{V\xx W}(ST)$, where $ST$ is the set of all products $fg$ of functions, for $f\in S$, $g\in T$ : we have indeed defined a topology.
In fact, by the already cited duality theorem 4.15 of \cite{EQAG}, the $k$-algebra of sections of the structural sheaf on any product of $V$ and $W$ is canonically isomorphic to $J(V)*J(W)$. Now every element $f$ of $J(V)*J(W)$ has the form 
$$\frac{\sum_j g_j\o h_j}{\sum_l u_l\o v_l}$$ with $\sum_l u_l\o v_l\in M_{J(V)\o J(W)}$, and it may therefore be conceived as a map $$(P,Q)\in V\xx W\mapsto \frac{\sum_j g_j(P)h_j(Q)}{\sum_l u_l(P)v_l(Q)},$$ which is well-defined by definition of $M_{J(V)\o J(W)}$ (and is in fact the $k$-morphism obtained as the sum of the evaluation morphisms $J(V)\to k$ and $J(W)\to k$ at $P$ and $Q$). As the denominator in this last expression is everywhere nonzero, any zero set of a set of such \og abstract functions" has the form $F=\ms Z_{V\xx W}(S)=\{(P,Q)\in V\xx W : \forall f\in S,f(P,Q)=0\}$, for $S\subs J(V)*J(W)$. It is clear that we have defined in two different ways the functions $V\xx W\to k$ which serve as a definition for the Zariski topology on $V\xx W$; the abstract description will sometimes be useful.

\begin{rem}
In general, i.e. if $V$ and $W$ are not supposed to be concrete, the elements of $J(V)*J(W)$ still define functions $V\xx W\to k$; this strengthens the relevance of working with concrete equivarieties. 
\end{rem}

\subsection*{The concrete structural sheaf on the product}
Next we define a concrete structural sheaf on $V\xx W$. If $U\subs V\xx W$ is open, we say that a function $f:U\to k$ is \emph{regular at $(P,Q)\in U$} if there exists an open $U'\subs U$ and $g,h\in J(V)*J(W)$ such that $(P,Q)\in U'$ and for all $(P',Q')\in U'$, we have $h(P',Q')\neq 0$ and $f(P',Q')=g(P',Q')/h(P',Q')$. This obviouly defines a sheaf, which we note $\O_{V\xx W}$.

\begin{lem}\label{OPENPROJ}
Each projection map $V\xx W\to V,W$ is an open regular map.
\end{lem}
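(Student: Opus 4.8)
The plan is to prove the statement for the first projection $\pi:=\pi_V:V\xx W\to V$; the case of $\pi_W$ is then identical, since the whole construction of $V\xx W$ — its Zariski topology and its structural sheaf — is symmetric in $V$ and $W$ (via the canonical isomorphism $J(V)*J(W)\cong J(W)*J(V)$). Three things must be checked: that $\pi$ is continuous and that $\pi$ pulls sections back to sections (together these say $\pi$ is regular), and that $\pi$ is open.

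For continuity, recall that a closed subset of the affine equivariety $V$ has the form $\ms Z_V(S)=\{P\in V:\forall g\in S,\ g(P)=0\}$ for some $S\subs J(V)$. Then $\pi^{-1}(\ms Z_V(S))=\{(P,Q)\in V\xx W:\forall g\in S,\ g(P)=0\}$ is exactly the common zero set of the functions $(P,Q)\mapsto g(P)\cdot 1(Q)$, $g\in S$, hence closed in $V\xx W$ for the Zariski topology. For the pullback of sections, let $U\subs V$ be open and $s\in\O_V(U)$; I would fix $(P,Q)\in\pi^{-1}(U)=U\xx W$ and, using the description of the structural sheaf of an affine equivariety (see \cite{EQAG}, Theorem 3.17), write $s=g/h$ on some open neighbourhood $P\in U'\subs U$, with $g,h\in J(V)$ and $h$ nowhere vanishing on $U'$. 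Then on $\pi^{-1}(U')$ one has $s\circ\pi=(g\o 1)/(h\o 1)$, where $g\o 1,h\o 1$ denote the images of $g,h$ in $J(V)*J(W)$ and $(h\o 1)(P',Q')=h(P')\neq 0$ throughout $\pi^{-1}(U')$; so $s\circ\pi$ is regular at $(P,Q)$. Since $(P,Q)$ was arbitrary, $s\circ\pi\in\O_{V\xx W}(\pi^{-1}U)$, and $\pi$ is regular.

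For openness, the key observation is that the sets $D(f):=\{(P,Q)\in V\xx W: f(P,Q)\neq 0\}$, where $f$ runs over functions of the form $(P,Q)\mapsto\sum_{i=1}^n g_i(P)h_i(Q)$ with $g_i\in J(V)$, $h_i\in J(W)$, form a basis of the Zariski topology on $V\xx W$ (by construction the closed subsets are the arbitrary intersections of zero sets of single such $f$'s). It therefore suffices to prove that $\pi(D(f))$ is open in $V$, and for this I would first collect terms so that the $h_i$ are linearly independent over $k$ — this changes only the presentation of $f$, not the set $D(f)$. For a point $P\in V$, the function $Q\mapsto f(P,Q)$ on $W$ is precisely the function underlying the algebra element $\sum_i g_i(P)h_i\in J(W)$, because $J(W)\subs k^W$ and the operations in $k^W$ are pointwise; since $W$ is concrete, this function is not identically zero if and only if $\sum_i g_i(P)h_i\neq 0$ in $J(W)$, which by linear independence of the $h_i$ happens if and only if $g_i(P)\neq 0$ for some $i$. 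Hence $\pi(D(f))=\{P\in V:\exists Q\in W,\ f(P,Q)\neq 0\}=\bigcup_{i=1}^n\{P\in V:g_i(P)\neq 0\}$, a finite union of open subsets of $V$; so $\pi(D(f))$ is open and $\pi$ is an open map.

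The only mildly delicate step is openness: it relies on the standard device of rewriting the bilinear expression defining a basic open set so that the $W$-factors become linearly independent, together with the fact — which is exactly the point of restricting to concrete equivarieties — that an element of $J(W)$ vanishes as a function on $W$ if and only if it is zero in the algebra. The remainder is a routine unwinding of the definitions of the Zariski topology and of the structural sheaf $\O_{V\xx W}$, so no further obstacle is expected.
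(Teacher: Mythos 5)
Your proof is correct. The continuity and regularity parts coincide with the paper's argument (pull back a closed set $\ms Z_V(S)$, resp.\ a local representation $s=g/h$, along $\pi$ via $g\mapsto g*1$), so only the openness step merits comment. There you reduce to a single basic open $D(f)$ with $f=\sum_{i=1}^n g_i*h_i$, normalise so that the $h_i$ are linearly independent over $k$, and use the concreteness of $W$ (an element of $J(W)\subs k^W$ vanishes as a function iff it is zero in the algebra) to get the closed-form identity $\pi(D(f))=\bigcup_{i=1}^n D(g_i)$. The paper instead evaluates partially in the \emph{other} variable: for each $Q\in W$ it forms $\sum_i h_i(Q)\,g_i\in J(V)$ and exhibits $\pi(U)$ as the union of the corresponding basic opens over all $Q$ and all generators of $U$; this avoids both the linear-independence normalisation and the faithfulness argument, at the cost of an index set running over all of $W$ rather than a finite explicit union. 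Both routes are sound; yours gives a sharper description of the image of a basic open set, while the paper's is slightly more economical in its prerequisites. Your reduction to single $f$'s is legitimate, since the complements of the defining zero sets are exactly the unions of the $D(f)$'s, so these do form a basis.
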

\begin{proof}
Write $\pi,\rho : V\xx W\to V,W$ the respective projection maps. If $U\subs V$ is open, there exists a subset $S$ of $J(V)$ such that $U=\{P\in V : \exists f\in S,f(P)\neq 0\}$, so $\pi^{-1}U=\{(P,Q)\in V\xx W : \exists f\in S,(f.1)(P,Q)\neq 0\}$ is open by definition of the Zariski topology on $V\xx W$, and $\pi$ is continuous. Furthermore, if $s\in \O_V(U)$ then each $P\in U$ has an open neighbourhood $U_P\subs U$ such that for every $P'\in U_P$, we have $s(P')=f(P')/g(P')$ for some $f,g\in J(V)$; as $U'_P=\pi^{-1}U_P$ is open and for every $(P',Q')\in U'_P$, we have $s\circ\pi(P',Q')=f(P')/g(P')$, we conclude that $s\circ \pi\in \O_{V\xx W}(\pi^{-1}U)$, and $\pi$ is regular; likewise, $\rho$ is regular. As for openness, it also suffices to prove it for $\pi$. Let $U\subs V\xx W$ be open : there exist a subset $S$ of $J(V)*J(W)$ such that $U=\{(P,Q)\in V\xx W : \exists f\in S,f(P,Q)\neq 0\}$; furthermore, we may assume that each member of $S$ has the form $\sum_i g_i *h_i$ (the denominators may be neglected for the definition of the topology). Let $X$ be the set of all elements of $J(V)$ of the form $\sum_i h_i(Q).g_i$, where $Q\in W$ and $\sum_i g_i*h_i\in S$ : if $P\in \pi(U)$, by definition there exists $Q\in W$ and $\sum_i g_i*h_i\in S$ such that $\sum_i g_i(P)h_i(Q)\neq 0$, so $P\in \{P'\in V : \exists f\in X,f(P')\neq 0\}$, an open subset of $V$. Conversely, let $P$ be a member of this set : by definition of $X$, there exists $Q\in W$ and $\sum_i g_i*h_i\in S$ such that $\sum_i g_i(P)*h_i(Q)=(\sum_i h_i(Q).g_i)(P)\neq 0$, so that $P\in \pi(U)$. It follows that $\pi(U)=\{P\in V : \exists f\in X,f(P)\neq 0\}$, and thus $\pi(U)$ is open, and $\pi$ is open. 
\end{proof}

\noindent We finally proceed to show that $(V\xx W,\O_{V\xx W})$ is a product in $EVar^a_k$ : it suffices to exhibit an isomorphism between $(V\xx W,\O_{V\xx W})$ and $X=Spm(J(V)*J(W))$. The underlying homeomorphism is $\mu : V\xx W \to X$, $(P,Q)\mapsto Ker(e_{P,Q})$, where $e_{P,Q}:J(V)*J(W) \to k$ is the \og evaluation at $(P,Q)$" obtained by the universal property of $J(V)*J(W)$, and which maps $f\in J(V)*J(W)$ to $f(P,Q)$ as defined above : as $J(V)*J(W)$ is an affine $*$-algebra, $X$ is naturally isomorphic as a set to $hom_k(J(V)*J(W),k)$, so $\mu$ is clearly a bijection, and by definition of the Zariski topology on $X$, it is clearly a homeomorphism. It remains to describe a sheaf isomorphism $\mu^\#:\O_X\cong \mu_* \O_{V\xx W}$ : if $U\subs X$ is open, $s\in \O_X(U)$ and $(P,Q)\in \mu^{-1}U$, there exists an open neighbourhood $U_{P,Q}\subs U$ of $\mu(P,Q)=\m_{P,Q}$ in $X$ and $f_{P,Q},g_{P,Q}\in J(V)*J(W)$ such that $s|_{U_{P,Q}}\equiv [f_{P,Q}]/[g_{P,Q}]$, and we let $t(P,Q):=f_{P,Q}(P,Q)/g_{P,Q}(P,Q)$ : it should be obvious by the definition of $\O_{V\xx W}$ that $t\in \O_{V\xx W}(\mu^{-1} U)$, and that $\mu^\#_U:s\mapsto t$ defines an isomorphism $\mu^\#:\O_X\cong \mu_*\O_{V\xx W}$, so we have the

\begin{prop}\label{AFFPROD}
The concrete equivariety $(V\xx W,\O_{V\xx W})$ is a product of $V$ and $W$ in the category $EVar^a_k$.
\end{prop}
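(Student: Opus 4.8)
Much of the work has in fact been carried out in the discussion immediately preceding the statement, so the plan is mostly to organise it. First I would record that, by the duality of \cite[Theorem 4.15]{EQAG} together with Proposition \ref{MAG.4.15}, the affine algebraic equivariety $X:=Spm\,(J(V)*J(W))$ equipped with its natural structural sheaf $\O_X$, and with the two morphisms $p:X\to V$ and $q:X\to W$ dual to the canonical injections $\iota_V:J(V)\into J(V)*J(W)$ and $\iota_W:J(W)\into J(V)*J(W)$, is a product of $V$ and $W$ in $EVar^a_k$ (this uses \cite[Proposition 4.9]{EQAG}). Since the construction preceding the statement already produces a morphism of equivarieties $(\mu,\mu^\#):(V\xx W,\O_{V\xx W})\to(X,\O_X)$ that is an isomorphism, it will be enough to prove that this isomorphism is compatible with the projections, namely that $p\circ(\mu,\mu^\#)=(\pi,\pi^\#)$ and $q\circ(\mu,\mu^\#)=(\rho,\rho^\#)$ for $\pi,\rho$ the projection maps of Lemma \ref{OPENPROJ}; the universal property will then transport along $\mu$ formally.

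Before that I would spell out why $(\mu,\mu^\#)$ is an isomorphism. On underlying sets, $\mu$ is bijective because $J(V)*J(W)$ is an affine $*$-algebra, so $X\cong hom_k(J(V)*J(W),k)$, and by the universal property of $*$ a $k$-morphism $J(V)*J(W)\to k$ amounts to a pair of $k$-morphisms $J(V)\to k$, $J(W)\to k$, i.e. to a point of $V\cong hom_k(J(V),k)$ and a point of $W\cong hom_k(J(W),k)$; it is a homeomorphism because the closed subsets of $V\xx W$ and those of $X$ are cut out by the same family of functions, namely the maps $(P,Q)\mapsto f(P,Q)$ attached to the elements $f\in J(V)*J(W)$ via the evaluations $e_{P,Q}$. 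For the sheaf part, a section of $\O_X$ over an open $U$ is locally a quotient $[f]/[g]$ with $f,g\in J(V)*J(W)$ and $[g]$ invertible on the relevant open set, and its transport along $\mu$ is, locally, the function $(P',Q')\mapsto f(P',Q')/g(P',Q')$ --- exactly a section of $\O_{V\xx W}$ by the definition of that sheaf --- and every section of $\O_{V\xx W}$ is locally of this form; so $\mu^\#_U$ is bijective, visibly a $k$-algebra map and compatible with restrictions, hence an isomorphism of sheaves.

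The substantive step is the compatibility with the projections. Under the identifications $X\cong hom_k(J(V)*J(W),k)$ and $V\cong hom_k(J(V),k)$, the morphism $p$ is precomposition with $\iota_V$, and $\mu$ carries $(P,Q)$ to the evaluation $e_{P,Q}$; but $e_{P,Q}$ is by construction the morphism obtained, via the universal property of $J(V)*J(W)$, from the evaluation morphisms $\mathrm{ev}_P:J(V)\to k$ at $P$ and $\mathrm{ev}_Q:J(W)\to k$ at $Q$, whence $e_{P,Q}\circ\iota_V=\mathrm{ev}_P$, that is $p(\mu(P,Q))=P=\pi(P,Q)$; symmetrically $q\circ\mu=\rho$. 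Now $p\circ(\mu,\mu^\#)$ and $(\pi,\pi^\#)$ are morphisms between the concrete equivarieties $V\xx W$ and $V$ with the same underlying continuous map, and by the first assertion of Proposition \ref{CONCEQ} a morphism between concrete equivarieties is determined by its underlying map (its sheaf component being precomposition with it); hence $p\circ(\mu,\mu^\#)=(\pi,\pi^\#)$, and likewise $q\circ(\mu,\mu^\#)=(\rho,\rho^\#)$.

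It then remains to transfer the universal property, which is purely formal: given an equivariety $Z$ with morphisms $a:Z\to V$ and $b:Z\to W$, there is by hypothesis a unique morphism $c:Z\to X$ with $p\circ c=a$, $q\circ c=b$; setting $c':=(\mu,\mu^\#)^{-1}\circ c$ we get $\pi\circ c'=a$ and $\rho\circ c'=b$, and if $d:Z\to V\xx W$ also satisfies $\pi\circ d=a$, $\rho\circ d=b$, then $(\mu,\mu^\#)\circ d$ coincides with $c$ by uniqueness in $X$, so $d=c'$; thus $(V\xx W,\O_{V\xx W})$, with $\pi$ and $\rho$, is a product of $V$ and $W$ in $EVar^a_k$. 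I expect the only place where something could really go wrong to be the penultimate step, i.e. the identification of the categorical projection $p$ of $Spm\,(J(V)*J(W))$ with the naive set-theoretic projection $\pi$ under $\mu$ --- everything hinges there on $e_{P,Q}$ restricting along $\iota_V$ to the evaluation at $P$ --- while the verification that $(\mu,\mu^\#)$ is an isomorphism was essentially done above and the transfer of the universal property is automatic.
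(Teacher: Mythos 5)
Your proof is correct and follows essentially the same route as the paper: the paper likewise proceeds by noting that $Spm(J(V)*J(W))$ is a product by the duality of \cite{EQAG} and then exhibiting the homeomorphism $\mu:(P,Q)\mapsto Ker(e_{P,Q})$ together with the sheaf isomorphism $\mu^\#$ identifying $(V\xx W,\O_{V\xx W})$ with it. The only difference is that you explicitly check that $\mu$ intertwines the dual projections of $Spm(J(V)*J(W))$ with the set-theoretic projections $\pi,\rho$ via $e_{P,Q}\circ\iota_V=\mathrm{ev}_P$ --- a point the paper leaves implicit --- which is a useful completion of the same argument rather than a divergence from it.
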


\noindent As an essential application, if $V$ is a concrete affine algebraic equivariety, consider the diagonal $\Delta_V=\{(P,P)\in V^2 : P\in V\}$. We have $\Delta_V=\{(P,Q)\in V^2 : \forall f\in J(V), (f*1-1*f)(P,Q)=0\}$ : indeed, for each $P\in V$ we have $(f*1-1*f)(P,P)=f(P)-f(P)=0$, and if $(f*1-1*f)(P,Q)=0$ for all $f\in J(V)$, for every $f\in J(V)$ we have $f(P)=f(Q)$, whence $P=Q$, because this is obviously true for $V$ an affine subvariety of an affine space (apply to a set of generators of $k[V]$ in this case). By what precedes, the diagonal $\Delta_V$ is therefore closed in $V^2$ for the Zariski topology; this \og separatedness property" is an analogue of the Hausdorff property, which we will use in order to generalise affine algebraic equivarieties to what we call \og algebraic equivarieties", in order to encompass all quasi-projective equivarieties (locally closed subvarieties of projective spaces). For this purpose we have to define the product of two equivarieties in general, and we will need the following

\begin{lem}\label{UNIPROD}
If $V$ and $W$ are concrete affine algebraic equivarieties over $k$, then there exists only one structure $\O$ of a concrete affine algebraic equivariety on $V\xx W$ with its Zariski topology, such that $(V\xx W,\O)$ is a product of $V$ and $W$ as affine algebraic equivarieties.
\end{lem}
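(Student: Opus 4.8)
The plan is to deduce uniqueness from the universal property of products together with the concreteness hypothesis. Existence is already Proposition \ref{AFFPROD}: the sheaf $\O_{V\xx W}$ constructed there makes $V\xx W$ (with its Zariski topology and the set-theoretic projections $\pi:V\xx W\to V$, $\rho:V\xx W\to W$, which are regular by Lemma \ref{OPENPROJ}) a product in $EVar^a_k$. So I would let $\O$ be any second concrete affine algebraic equivariety structure on the space $V\xx W$ for which $(V\xx W,\O)$, with the same projections $\pi,\rho$, is again a product of $V$ and $W$, and aim to show $\O=\O_{V\xx W}$ as sheaves of $k$-valued functions.

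First I would invoke the universal property twice to obtain morphisms of equivarieties $\theta:(V\xx W,\O)\to(V\xx W,\O_{V\xx W})$ and $\theta':(V\xx W,\O_{V\xx W})\to(V\xx W,\O)$, each compatible with $\pi$ and $\rho$. The elementary but decisive observation is that the relations $\pi\circ\theta=\pi$ and $\rho\circ\theta=\rho$ already pin down the underlying set map of $\theta$ to be $\mathrm{id}_{V\xx W}$, since a point of the Cartesian product $V\xx W$ is determined by its two coordinates; likewise $\theta'$ is the identity on points.

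Then I would use concreteness. Since $\theta$ is a morphism of equivarieties between the two \emph{concrete} equivarieties $(V\xx W,\O)$ and $(V\xx W,\O_{V\xx W})$, Proposition \ref{CONCEQ} says it is a regular map, so $\theta^\#_U(s)=s\circ\theta=s$ for every open $U\subs V\xx W$ and every $s\in\O_{V\xx W}(U)$; as $\theta^\#_U$ takes values in $(\theta_*\O)(U)=\O(\theta^{-1}U)=\O(U)$, this gives $\O_{V\xx W}(U)\subs\O(U)$ inside $k^U$. Running the same argument with $\theta'$ yields the reverse inclusion, whence $\O(U)=\O_{V\xx W}(U)$ for all open $U$, i.e.\ $\O=\O_{V\xx W}$.

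I do not expect a genuine obstacle; the only points that need care are logical. One is that ``product'' in the statement must be read with the canonical projections $\pi,\rho$ — otherwise precomposition with a nontrivial automorphism of $(V\xx W,\O_{V\xx W})$ would transport $\O_{V\xx W}$ to a different sheaf still making $V\xx W$ a product, and the lemma would be false. The other is that it is precisely the concreteness of both structures, via Proposition \ref{CONCEQ}, that upgrades the abstract isomorphism of products to an honest equality of sheaves of functions into $k$.
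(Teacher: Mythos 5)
Your proposal is correct and follows essentially the same route as the paper: invoke the universal property to produce a regular map $\theta$ between the two structures, observe that compatibility with the two projections forces $\theta=\mathrm{Id}$ on points, and then use concreteness (regularity of the identity, so $s=s\circ\mathrm{Id}$) to get one inclusion of sheaves, with symmetry giving the other. Your explicit remark that the projections must be the canonical set-theoretic ones is a point the paper leaves implicit, but it does not change the argument.
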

\begin{proof}
Let $\O$ and $\O'$ be two concrete sheaves on $V\xx W$ such that $(V\xx W,\O)$ and $(V\xx W,\O')$ are both products of $V$ and $W$ as affine algebraic equivarieties. In particular, the projections $\pi_V,\pi_W:V\xx W\to V,W$ are regular for both structures, so by universal property of $(V\xx W,\O)$, there exists a unique regular map $\theta:V\xx W\to V\xx W$ such that $\pi_V\circ \theta_V=\theta_V$ and $\pi_W\circ \theta_W=\theta_W$, and this entails that $\theta=Id$, the identity map $Id:V\xx W\to V\xx W$, which is therefore regular. Now let $U\subs V\xx W$ be open and $s\in \O'(U)$ : we get $s=s\circ Id|_U\in \O(Id^{-1}U)=\O(U)$, so $\O'(U)\subs \O(U)$; by symmetry, we have $\O(U)=\O'(U)$, so $\O=\O'$ and the concrete product structure on $V\xx W$ is unique.
\end{proof}

\section{Products of equivarieties}\label{PRODEQ}
\subsection*{Glueing concrete equivarieties}
We are going to define the product of two (concrete) equivarieties by using the products of the members of an open affine cover of each one, so we need a means to \og glue" together (concrete) equivarieties in a unique way.

\begin{lem}\label{MAG.4.13}
Let $V=\bigcup_i V_i$ be a union of concrete equivarieties over $k$, such that for all $i,j$, $V_i\cap V_j$ is open in $V_i$ and $V_j$, and $\O_{V_i}|_{V_i\cap V_j}=\O_{V_j}|_{V_i\cap V_j}$. There exists a unique structure of a concrete equivariety $\O_V$ over $k$ on $V$ such that :\\
i) every $V_i$ is open in $V$\\
ii) for each $i$, $\O_{V}|_{V_i}=\O_{V_i}$.\\
Furthermore, a regular map from $V$ into a concrete equivariety $W$ is essentially a family $\phi_i:V_i\to W$ of regular maps such that $\phi_i|_{V_i\cap V_j}=\phi_j|_{V_i\cap V_j}$ for all $i,j$.
\end{lem}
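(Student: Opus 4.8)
The plan is to run the standard glueing construction for ringed spaces, exploiting the fact that here all sheaves are sheaves of $k$-valued functions, which makes the sheaf axioms and compatibility conditions trivial. First I would topologise $V$ by declaring $U\subs V$ to be open exactly when $U\cap V_i$ is open in $V_i$ for every $i$; the topology axioms are immediate, each $V_i$ is open because $V_i\cap V_j$ is open in $V_i$ by hypothesis, and the subspace topology that $V$ induces on $V_i$ is readily seen to coincide with the original one on $V_i$. Then for $U\subs V$ open I would set
\[
\O_V(U):=\{\,f:U\to k\ :\ \forall i,\ f|_{U\cap V_i}\in\O_{V_i}(U\cap V_i)\,\},
\]
a $k$-subalgebra of $k^U$. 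Since this is a condition local on $U$ bearing on honest functions, $\O_V$ is automatically a concrete sheaf: the restriction maps are function restrictions, and if $f:U\to k$ agrees on an open cover of $U$ with sections of $\O_V$, then each $f|_{U\cap V_i}$ agrees locally with sections of $\O_{V_i}$, hence lies in $\O_{V_i}(U\cap V_i)$ because $\O_{V_i}$ is a sheaf.

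The key verification is that $\O_V|_{V_i}=\O_{V_i}$ for each $i$. For $U\subs V_i$ open, if $f\in\O_V(U)$ then the $i$-th condition gives $f=f|_{U\cap V_i}\in\O_{V_i}(U)$; conversely, if $f\in\O_{V_i}(U)$ and $j$ is arbitrary, then $U\cap V_j=U\cap(V_i\cap V_j)$ is open in $V_i$, so $f|_{U\cap V_j}\in\O_{V_i}(U\cap V_j)$, and by the hypothesis $\O_{V_i}|_{V_i\cap V_j}=\O_{V_j}|_{V_i\cap V_j}$ this set equals $\O_{V_j}(U\cap V_j)$, whence $f\in\O_V(U)$. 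This is precisely where the compatibility assumption is used. Granting it, $(V,\O_V)$ is a locally ringed space in $k$-algebras with the residual structural isomorphisms on its stalks, since the stalk of $\O_V$ at a point $P\in V_i$ is computed inside the open set $V_i$ and hence equals the stalk of $\O_{V_i}$; and since every point of $V$ lies in some $V_i$, which is an equivariety, it has an open neighbourhood in $V_i$ — hence in $V$ by (i), with the same sheaf — isomorphic to an affine algebraic equivariety (Lemma \ref{OPENBASIS} even yields a basis of such). Thus $(V,\O_V)$ is a concrete equivariety satisfying (i) and (ii).

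For uniqueness, suppose $\O$ is another concrete equivariety structure on $V$ satisfying (i) and (ii). Its topology must render each $V_i$ open with induced topology the original one, which forces it to be exactly the topology described above; and for $U\subs V$ open, applying the sheaf axiom for $\O$ to the cover $U=\bigcup_i(U\cap V_i)$ — the gluing condition being automatic for functions — forces $\O(U)=\{f:U\to k:\forall i,\ f|_{U\cap V_i}\in\O(U\cap V_i)\}=\O_V(U)$, so $\O=\O_V$. Finally, for the statement on regular maps: a regular map $\phi:V\to W$ restricts to regular maps $\phi_i:=\phi|_{V_i}:V_i\to W$ (a regular map restricted to an open subvariety is regular, by (ii)), and visibly $\phi_i|_{V_i\cap V_j}=\phi_j|_{V_i\cap V_j}$; conversely a family $(\phi_i)$ of regular maps agreeing on the overlaps glues to a unique set map $\phi:V\to W$ with $\phi|_{V_i}=\phi_i$, which is continuous since $\phi^{-1}(O)\cap V_i=\phi_i^{-1}(O)$ is open in $V_i$ for every open $O\subs W$, and regular since for $s\in\O_W(O)$ one has $(s\circ\phi)|_{\phi^{-1}(O)\cap V_i}=s\circ\big(\phi_i|_{\phi_i^{-1}(O)}\big)\in\O_{V_i}(\phi_i^{-1}(O))=\O_{V_i}(\phi^{-1}(O)\cap V_i)$, whence $s\circ\phi\in\O_V(\phi^{-1}(O))$.

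The step that will demand the most care is the identity $\O_V|_{V_i}=\O_{V_i}$, together with the attendant bookkeeping that every auxiliary open set occurring ($U\cap V_j$, $\phi_i^{-1}(O)$, and the like) is simultaneously open in the ambient spaces where it is needed; once that is in place, the verification that $(V,\O_V)$ is genuinely an equivariety — not merely a locally ringed space — is the only other point that truly requires the localness of the defining axioms, and the remainder is the routine "glueing is local" argument, here rendered essentially formal because all the sheaves involved are sheaves of functions.
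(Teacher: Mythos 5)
Your proof is correct and follows essentially the same route as the paper's: the same topology (unions of opens of the $V_i$'s, equivalently your "final topology" description), the same definition of $\O_V(U)$ by the condition $f|_{U\cap V_i}\in\O_{V_i}(U\cap V_i)$ for all $i$, and the same glueing argument for regular maps. If anything, you spell out the two points the paper leaves implicit (the use of the compatibility hypothesis in checking $\O_V|_{V_i}=\O_{V_i}$, and the uniqueness via the sheaf axiom), so there is nothing to add.
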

\begin{proof}
We follow \cite{MAG}, Proposition 4.13, and consider on $V$ the topology which open sets are all unions of open subsets of the $V_i$'s. For $U\subs V$ open, let $U_i=U\cap V_i$ for each $i$ and for all $i,j$, let $U_{ij}=U_i\cap U_j$ : we have $\O_{V_i}(U_{ij})=\O_{V_j}(U_{ij})$ by hypothesis, so we let $\O_V(U)$ be the set of all functions $f:U\to k$ such that for each $i$, $f|_{U_i}\in \O_{V_i}(U_i)$; it is obvious that $\O_V$ thus defined is a sheaf on $V$ for the aforementioned topology. If $U\subs V_i$ is open, it is open in $V$, and by definition of $\O_V$, we have $\O_V(U)=\O_{V_i}(U)$, so $\O_V|_{V_i}=\O_{V_i}$ and this is the only way of defining $\O_V$ with this property. Now if $P\in V$, there exists $i$ such that $P\in V_i$, and as $V_i$ is a concrete equivariety over $k$, there exists an open subset $U\subs V_i$, such that $\O_{V_i}|_{U}$ is an affine algebraic equivariety over $k$; as $U$ is open in $V$ by definition, this shows that $(V,\O_V)$ is a (concrete) equivariety over $k$. 
As for the last assertion, if $\phi_i:V_i\to W$ is a family of regular maps such that $\phi_i|_{V_i\cap V_j}=\phi_j|_{V_i\cap V_j}$ for all $i,j$ it is clear that $\phi:V\to W$, $v\in V\mapsto\phi_i(v)$ for every $i$ such that $v\in V_i$ is well defined and continuous by definition of the topology on $V$, so let us show that $\phi$ is regular : if $U\subs W$ is open and $f\in \O_W(U)$, we want to show that $f\circ \phi|_{\phi^{-1}U}\in \O_V(\phi^{-1}U)$; by definition of $\O_V$, it suffices to show that $(f\circ \phi|_{\phi^{-1}U})|_{\phi_i^{-1} U}\in \O_{V_i}(\phi_i^{-1}U)$ for all $i$. Now we have $(f\circ\phi|_{\phi^{-1}U})|_{\phi_i^{-1}U}=f\circ \phi|_{\phi_i^{-1}U}=f\circ \phi_i|_{\phi_i^{-1} U}\in \O_{V_i}(\phi_i^{-1} U)$, because $\phi_i$ is regular; it follows that $(f\circ\phi|_{\phi^{-1}U})|_{\phi_i^{-1} U}\in \O_V(\phi_i^{-1}U)$, so $\phi$ is indeed regular. By definition, $\phi$ is the unique regular map extending all the $\phi_i$'s.
\end{proof}

\begin{lem}\label{UNIGLU}
With the notations and hypotheses of Lemma \ref{MAG.4.13}, the structure on $V$ is unique, in the sense that if $(V,\O_V)$ is a concrete equivariety, $V=\bigcup_i V_i$ for equivarieties $(V_i,\O_{V_i})$ with $V_i$ open in $V$ and $\O_V|_{V_i}=\O_{V_i}$ for each $i$, then the structure induced by the $V_i$'s is the original structure on $V$.
\end{lem}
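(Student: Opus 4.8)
The plan is to show that the glueing construction of Lemma \ref{MAG.4.13}, when applied to the data $(V_i,\O_{V_i})$ coming from an \emph{already given} concrete equivariety $(V,\O_V)$ for which each $V_i$ is open and $\O_V|_{V_i}=\O_{V_i}$, returns exactly $(V,\O_V)$. There are two things to check: that the glued topology coincides with the original topology on $V$, and that the glued sheaf coincides with $\O_V$. Since Lemma \ref{MAG.4.13} already produces a \emph{concrete} structure with properties (i) and (ii), the statement is really that these two properties pin down the structure among concrete equivarieties.

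First I would handle the topology. The glued topology declares a subset $U\subs V$ open iff it is a union of open subsets of the various $V_i$; since each $V_i$ is open in $(V,\O_V)$, every such union is open in $V$, so the glued topology is coarser than (contained in) the original one. Conversely, if $U\subs V$ is open in the original topology, then $U=\bigcup_i (U\cap V_i)$ and each $U\cap V_i$ is open in $V_i$ (as $V_i$ carries the subspace topology, $V_i$ being open in $V$), so $U$ is open in the glued topology. Hence the two topologies agree.

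Next, the sheaves. Write $\O_V'$ for the glued sheaf. For $U$ open, $\O_V'(U)$ is by construction the set of functions $f:U\to k$ with $f|_{U\cap V_i}\in\O_{V_i}(U\cap V_i)=\O_V(U\cap V_i)$ for every $i$. If $f\in\O_V(U)$, then since $\O_V$ is a sheaf and restriction to an open subset lands in $\O_V$, each $f|_{U\cap V_i}$ lies in $\O_V(U\cap V_i)=\O_{V_i}(U\cap V_i)$, so $f\in\O_V'(U)$; thus $\O_V(U)\subs\O_V'(U)$. Conversely, if $f\in\O_V'(U)$, then $\{U\cap V_i\}_i$ is an open cover of $U$ (because $V=\bigcup_i V_i$), the restrictions $f|_{U\cap V_i}\in\O_V(U\cap V_i)$ agree on overlaps (they are all restrictions of the single function $f$), and $\O_V$ is a sheaf, so they glue to a section of $\O_V(U)$; since $\O_V$ is a sheaf of functions, that section must be $f$ itself, whence $f\in\O_V(U)$. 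Therefore $\O_V'(U)=\O_V(U)$ for every open $U$, and since the restriction maps are the obvious ones on both sides, $\O_V'=\O_V$.

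The only point requiring a little care — and the closest thing to an obstacle — is the last sheaf-gluing argument: one must invoke that $\O_V$ is genuinely a sheaf (not merely a presheaf) on the \emph{original} topology and that its sections are honest $k$-valued functions, so that the abstractly glued section is identified with $f$. Both are part of the hypothesis that $(V,\O_V)$ is a concrete equivariety, so no new input is needed; everything else is a direct unwinding of the definition of the glued structure in Lemma \ref{MAG.4.13}.
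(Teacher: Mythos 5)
Your proof is correct, but it takes a more hands-on route than the paper. The paper's own proof is a three-line deduction: it first observes that the hypotheses of Lemma \ref{MAG.4.13} are satisfied (namely $\O_{V_i}|_{V_i\cap V_j}=\O_V|_{V_i\cap V_j}=\O_{V_j}|_{V_i\cap V_j}$, since both restrict from the ambient $\O_V$), then invokes the \emph{uniqueness} clause already built into the statement of Lemma \ref{MAG.4.13} -- there is only one concrete structure for which each $V_i$ is open and $\O|_{V_i}=\O_{V_i}$ -- and notes that $\O_V$ is such a structure. You instead unwind the glueing construction and verify directly that it returns the original topology (both topologies have $\{U\cap V_i\}$ as a generating family) and the original sheaf (via the sheaf axiom for $\O_V$ and the fact that sections are honest $k$-valued functions). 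In effect you re-prove, in this special instance, the uniqueness assertion that the paper cites as a black box; this makes your argument self-contained and arguably more transparent, at the cost of repeating work, whereas the paper's version is shorter but leans on the (somewhat tersely justified) uniqueness part of Lemma \ref{MAG.4.13}. One small omission: you should note, as the paper does, that the compatibility condition $\O_{V_i}|_{V_i\cap V_j}=\O_{V_j}|_{V_i\cap V_j}$ holds (both equal $\O_V|_{V_i\cap V_j}$), so that "the structure induced by the $V_i$'s" is even defined; this is immediate but worth a sentence.
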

\begin{proof}
By hypothesis, for all $(i,j)$ the set $V_i\cap V_j$ is open in $V_i$, and $\O_{V_i}|_{V_i\cap V_j}=\O_V|_{V_i\cap V_j}=\O_{V_j}|_{V_i\cap V_j}$. By Lemma \ref{MAG.4.13}, there exists a unique structure $\O_W$ of a concrete equivariety over $V$ such that each $V_i$ is open in $V$ and for each $i$, $\O_W|_{V_i}=\O_{V_i}$. As by definition, $\O_V$ is such a structure, we have $\O_V=\O_W$.
\end{proof}

\subsection*{The product sheaf} Let $V,W$ be concrete equivarieties over $k$ : we may find covers $V=\bigcup_i V_i$ and $W=\bigcup_j W_j$ by open affine algebraic subvarieties. First by the end of the preceding section 
we have a concrete and quite simple description of the structural sheaves $\O_{V_i\xx W_j}$. Now for all $a,b$, $V_i\cap V_a$ is open in $V_i$ and $V_a$ and $W_j\cap W_b$ is open in $W_j$ and $W_b$ by definition of $V,W$. As the (Zariski) topology on $V_i\xx W_j$ and $V_a\xx W_b$ is finer than the product topology, it follows that $(V_i\xx W_j)\cap (V_a\xx W_b)=(V_i\cap V_a)\xx (W_j\cap W_b)$ is open in $V_i\xx W_j$ and $V_a\xx W_b$, so we may consider the application of Lemma \ref{MAG.4.13} to the definition of a structural sheaf on $V\xx W=\bigcup_{i,j} V_i\xx W_j$ (a true set-theoretic equality). Writing $X_{ij}=V_i\xx W_j$ and $X_{ab}=V_a\xx W_b$ we only need to check that $\O_{X_{ij}}|_{X_{ij}\cap X_{ab}}=\O_{X_{ab}}|_{X_{ij}\cap X_{ab}}$. For this purpose, let $O\subs X_{ij}\cap X_{ab}=(V_i\cap V_a)\xx (W_j\cap W_b)$ be an open subset and $s\in \O_{X_{ij}}(O)$ : by definition of $\O_{X_{ij}}$ each $(P,Q)\in O$ has an open neighbourhood $O'\subs O$ over which $s$ is represented as $$(P',Q')\in O'\mapsto \frac{\sum_u f_u(P')g_u(Q')}{\sum_v l_v(P')m_v(Q')}$$ with $f_u,l_v\in J(V_i)$ and $g_u,m_v\in J(W_j)$ say. We have to refine the description of the affine product sheaf on the product of two affine open subsets :

\begin{lem}\label{IPROAFF}
If $V$ and $W$ are concrete affine equivarieties and $O\subs V$, $U\subs W$ are affine open subsets, then :\\
i) the Zariski topology on $O\xx U$ is the topology induced on $O\xx U$ by the Zariski topology on $V\xx W$\\
ii) $\O_{O\xx U}=\O_{V\xx W}|_{O\xx U}$.
\end{lem}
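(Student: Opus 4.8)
The plan is to reduce both claims, via two elementary reformulations, to clearing denominators. Since the Zariski topology on a product of affine equivarieties refines the product topology (as noted just above), $O\xx U$ is an open subset of $V\xx W$, so both the subspace topology on $O\xx U$ and the restriction $\O_{V\xx W}|_{O\xx U}$ are unambiguous. I would first record two observations. \emph{(A)} For any open $O'\subs V\xx W$, a function $s:O'\to k$ lies in $\O_{V\xx W}(O')$ if and only if every point of $O'$ has a neighbourhood on which $s$ equals a quotient $g/h$ with $g,h\in J(V)\o_k J(W)$ and $h$ nowhere zero on that neighbourhood; indeed, by the very definition of $\O_{V\xx W}$ this amounts to the same statement with $g,h$ allowed to lie in $J(V)*J(W)=(J(V)\o_k J(W))_M$, and any such element is, as a function on $V\xx W$, a quotient of an element of $J(V)\o_k J(W)$ by an element of $M_{J(V)\o J(W)}$, the latter being nowhere zero on $V\xx W$. \emph{(B)} Since $O$ is an affine open subvariety of the affine equivariety $V$, every $s\in J(O)=\O_V(O)$ is, in a suitable neighbourhood of any given point $P_0\in O$, a quotient $a/b$ with $a,b\in J(V)$ and $b(P_0)\neq 0$ --- this is the defining local description of the structural sheaf of an affine subvariety, cf. \cite{EQAG}, Theorem 3.17. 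The analogues of (A) and (B) with $O,U$ in place of $V,W$ will be used freely.

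For (i), the inclusion of the subspace topology into the Zariski topology of $O\xx U$ is formal: restricting a defining function $(P,Q)\mapsto\sum_i g_i(P)h_i(Q)$ (with $g_i\in J(V)$, $h_i\in J(W)$) of the Zariski topology of $V\xx W$ to $O\xx U$ gives the function $(P,Q)\mapsto\sum_i(g_i|_O)(P)(h_i|_U)(Q)$, which is a defining function of the Zariski topology of $O\xx U$ (with $g_i|_O\in J(O)$, $h_i|_U\in J(U)$), so each basic closed set of $V\xx W$ cuts out a closed set of $O\xx U$. For the reverse inclusion, let $\Omega=\{(P,Q)\in O\xx U:\sum_i g_i(P)h_i(Q)\neq 0\}$ with $g_i\in J(O)$, $h_i\in J(U)$, and fix $(P_0,Q_0)\in\Omega$. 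By (B), write $g_i=a_i/b_i$ near $P_0$ and $h_i=c_i/d_i$ near $Q_0$ with $a_i,b_i\in J(V)$, $c_i,d_i\in J(W)$, $b_i(P_0)\neq 0$, $d_i(Q_0)\neq 0$. Multiplying through by the globally defined factor $e:=\big(\prod_i b_i\big)\o\big(\prod_i d_i\big)\in J(V)\o_k J(W)$ yields an element $\wt f\in J(V)\o_k J(W)$ with $\wt f=e\cdot\sum_i g_ih_i$ on a product neighbourhood $N=N_1\xx N_2\subs O\xx U$ of $(P_0,Q_0)$ on which $e$ is nowhere zero; hence $\Omega\cap N=\{\wt f\neq 0\}\cap N$, the intersection of the Zariski-open set $\{\wt f\neq 0\}$ of $V\xx W$ with the (product-open, hence Zariski-open) set $N$. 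Covering $\Omega$ by such pieces shows it is open in the subspace topology, which would prove (i).

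For (ii), by (i) both $\O_{O\xx U}$ and $\O_{V\xx W}|_{O\xx U}$ are sheaves of $k$-valued functions on the same space, each characterised by a pointwise-local membership condition, so it is enough to compare these conditions on a function defined on an open $O'\subs O\xx U$. If $s\in\O_{V\xx W}(O')$, applying the restriction homomorphism $J(V)*J(W)\to J(O)*J(U)$ (which is compatible with evaluation) to a local expression $s=g/h$ exhibits $s$ locally as a quotient of elements of $J(O)*J(U)$ with nonvanishing denominator, so $s\in\O_{O\xx U}(O')$. Conversely, if $s\in\O_{O\xx U}(O')$, then by the analogue of (A) for $O\xx U$, near any point $s$ is a quotient of two elements of $J(O)\o_k J(U)$ with nowhere-zero denominator; expanding each $J(O)$- and $J(U)$-factor via (B) and clearing denominators exactly as in (i) rewrites $s$, near that point, as a quotient of two elements of $J(V)\o_k J(W)$ whose denominator is nowhere zero on a neighbourhood, whence $s\in\O_{V\xx W}(O')$ by (A). Therefore $\O_{O\xx U}=\O_{V\xx W}|_{O\xx U}$. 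The only delicate point is reformulation (A): it is precisely what lets the cleared denominators --- which are nowhere zero only \emph{locally}, and so need not belong to the multiplicative set $M_{J(V)\o J(W)}$ --- still furnish legitimate local representations of sections of $\O_{V\xx W}$; everything else is routine bookkeeping.
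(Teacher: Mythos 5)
Your proof is correct. For part (i) you do essentially what the paper does: restriction of defining functions in one direction, and in the other direction a pointwise reduction to a common denominator using the local description of sections over the affine opens $O$ and $U$ (the paper's $m_P n_Q$ is your $e$), together with the fact that the Zariski topology refines the product topology. For part (ii), however, you take a genuinely different route. The paper does not compare the two sheaves section by section; it reduces to $V,W$ affine subvarieties and shows that $(O\xx U,\O_{V\xx W}|_{O\xx U})$ satisfies the universal property of the product of $O$ and $U$, then concludes by the uniqueness of the concrete product structure (Lemma \ref{UNIPROD}). That argument reuses the categorical machinery already in place and yields, as a by-product, the conceptually useful statement that the restricted sheaf makes $O\xx U$ a product of $O$ and $U$. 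Your argument instead identifies the two sheaves of functions directly, and its real content is your reformulation (A): a section of $\O_{V\xx W}$ is locally a quotient of elements of $J(V)\o_k J(W)$ with \emph{locally} nowhere-vanishing denominator, the passage through the canonical localisation being harmless because elements of $M_{J(V)\o J(W)}$ vanish nowhere. With (A) in hand, (ii) becomes the same denominator-clearing computation as (i), which is arguably more elementary and self-contained. One small point: in the easy inclusion of (ii) you invoke a restriction homomorphism $J(V)*J(W)\to J(O)*J(U)$, whose existence requires that restriction carry $M_{J(V)\o J(W)}$ into $M_{J(O)\o J(U)}$; this is true (for affine $*$-algebras $M$ consists exactly of the nowhere-vanishing elements), but your own observation (A) lets you work entirely at the level of tensor products and avoid the question, and it would be cleaner to say so explicitly.
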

\begin{proof}
i) Let $X\subs V\xx W$ be open for the induced topology : by definition there exists a subset $S\subs J(V)*J(W)$ such that $X=\{(P,Q)\in V\xx W : \exists f\in S,f(P,Q)\neq 0\}$, and we may assume that every $f\in S$ has the form $\sum_i g_i *h_i$, so $X\cap (O\xx U)=\{(P,Q)\in O\xx U : \exists f=\sum_i g_i*h_i\in S,\sum_i g_i|_O(P)h_i|_U(Q)\neq 0\}$, which is clearly an open subset of $O\xx U$ for the Zariski topology. Conversely, if $X\subs O\xx U$ is open, there exists a subset $S\subs J(O)*J(U)$ (for the structures induced on $O$ and $U$ by $V$ and $W$ respectively) such that $X=\{(P,Q)\in O\xx U : \exists f\in S,f(P,Q)\neq 0\}$. Let $(P,Q)\in O\xx U$ : for each $f=\sum_i g_i*h_i\in S$ (again we may assume that the members of $S$ have this form), there exist open subsets $O_{f,P}\subs O$ and $U_{f,Q}\subs U$ and $u_{i,P},m_{i,P}\in J(V)$, $v_{i,Q},n_{i,Q}\in J(W)$ such that $(P,Q)\in O_{f,P}\xx U_{f,Q}$ and for each $(P',Q')\in O_{f,P}\xx U_{f,Q}$, we have $f(P',Q')=\sum_i (u_{i,P}(P')/m_{i,P}(P')) (v_{i,Q}(Q')/n_{i,Q}(Q'))$. Reducing to the same denominator, we may rewrite this as $f(P',Q')=\sum_i (u_{i,P}(P')v_{i,Q}(Q'))/(m_{i,P}(P')n_{i,Q}(Q'))=(\sum_i \wt{u_{i,P}}(P')\wt{v_{i,Q}}(Q'))/m_P(P')n_Q(Q')$, with $m_P(P')n_Q(Q')\neq 0$. Write $g_{f,P,Q}=\sum_i \wt{u_{i,P}}*\wt{v_{i,Q}}$ the element of $J(V)*J(W)$ arising from such a choice of $O_{f,P}$, $U_{f,Q}$ and a local representation of $f$, and $X_{(f,P,Q)}=\{(P',Q')\in (O_{f,P}\cap D_O(m_P))\xx (U_{f,Q}\cap D_U(n_Q)) : g_{f,P,Q}(P',Q')\neq 0\}$ : it is now easy to see that as $X=\bigcup_{(f,P,Q)\in S\xx O\xx U} X_{f,P,Q}$, and as each $X_{f,P,Q}$ is open in $V\xx W$ for the Zariski topology, $X$ itself is open for the induced topology.\\
ii) It suffices to do the proof for $V,W$ affine subvarieties and for this, to show that $(O\xx U,\O_{V\xx W}|_{O\xx U})$ has the universal property of $(O\xx U,\O_{O\xx U})$ by Lemma \ref{UNIPROD}. First we show that the two projections $\pi_O,\pi_U:O\xx U\to O,U$ are regular for the equivariety structure $(O\xx U,\O_{V\xx W}|_{O\xx U})$, and it suffices to do it for $\pi_O$. We already know that $\pi_O$ is continuous, so let $X\subs O$ be an open subset, $s\in \O_O(X)=\O_V(X)$ and $P\in X$ : there exists an open neighbourhood $X_P\subs X$ of $P$ and $f_P,g_P\in J(V)$ such that for each $Q\in X_P$, we have $s(Q)=f_P(Q)/g_P(Q)$, and we have $X=\bigcup_{P\in X} X_P$. It follows that $\pi_O^{-1} X=\bigcup_{P\in X} \pi_O^{-1} X_P$, and for each $P\in X$, if $(P',Q')\in \pi_O^{-1} X_P$ we have $s\circ \pi_O(P',Q')=f_P(P')/g_P(P')=(f_P*1)(P',Q')/(g_P*1)(P',Q')$, which shows that $s\circ \pi_O|_{\pi_O^{-1} X}\in \O_{V\xx W}(\pi_O^{-1}X)=\O_{V\xx W}|_{O\xx U}(\pi_O^{-1} X)$, so that $\pi_O$ is indeed regular. Now let $\phi,\psi:Z\to O,U$ be two regular morphisms with $Z$ a concrete affine equivariety over $k$, say. If $O'\subs V$ is open and $s\in \O_V^\#(O')$, then $s\circ \phi|_{\phi^{-1} O'}=s|_{O\cap O'}\circ \phi|_{\phi^{-1} O'}\in \O_Z(\phi^{-1} O')$, so $\phi$ is regular as a map $Z\to V$ and likewise $\psi:Z\to W$ is regular. By universal property of $(V\xx W,\O_{V\xx W})$, there exists a unique regular map $\theta:Z\to V\xx W$ such that $\pi_V\circ \theta=\phi$ and $\pi_W\circ \theta=\psi$, with $\pi_V,\pi_W:V\xx W\to V,W$ the canonical projections. Now for each $z\in Z$, we have $\theta(z)\in O\xx U$, and $\theta:Z\to O\xx U$ is continuous (if $X\subs O\xx U$ is open, as $O\xx U$ itself is open in $V\xx W$, by (i) $X$ is open in $V\xx W$, so $\theta^{-1}(X)$ is open). If $X\subs O\xx U$ is open, and $s\in \O_{V\xx W}|_{O\xx U}(X)=\O_{V\xx W}(X)$, by regularity of $\theta$ we have $s\circ\theta|_{\theta^{-1}X}\in \O_Z(\theta^{-1} X)$, so $\theta:(Z,\O_Z)\to (O\xx U,\O_{V\xx W}|_{O\xx U})$ factorises $\phi$ and $\psi$ through $\pi_O,\pi_U:O\xx U\to O,U$, therefore $(O\xx U,\O_{V\xx W}|_{O\xx U})$ is indeed a product of $O$ and $U$ as concrete affine algebraic equivarieties. 
\end{proof}

\noindent Now $O'$ is an open subset of $V_a\cap W_b$, and we show that $s|_{O'}\in \O_{X_{ab}}(O')$. By Lemma \ref{OPENPROJ} the canonical projections $\pi_a,\pi_b:X_{ab}\to V_a,W_b$ are open, so $O'_a:=\pi_a(O')\subs V_a$ and $O'_b:=\pi_b(O')\subs W_b$ are open. Let $U_a\subs O'_a$ and $U_b\subs O'_b$ be two affine open subsets such that $(P,Q)\in U_a\xx U_b$. For every $(P',Q')\in U=O'\cap (U_a\xx U_b)$, we have $$s(P',Q')=\frac{\sum_u f_u|_{U_a}(P')g_u|_{U_b}(Q')}{\sum_v l_v|_{U_a}(P') m_v|_{U_b}(Q')},$$ the restriction to $U$ of an element of $\O_{U_a\xx U_b}(U_a\xx U_b)=\O_{V_a\xx W_b}(U_a\xx U_b)$ by Lemma \ref{IPROAFF}, and therefore every $(P,Q)\in O'$ has an open neighbourhood in $O'$ over which the restriction of $s$ is a function of the sheaf $\O_{V_a\xx W_b}$, so that $s|_{O'}\in \O_{X_{ab}}(O')$, whence by the local characterisation of sections of a sheaf, we get $s\in \O_{X_{ab}}(O)$, so $\O_{X_{ij}}(O)\subs\O_{X_{ab}}(O)$ and thus $\O_{X_{ij}}(O)=\O_{X_{ab}}(O)$ by symmetry, and therefore $\O_{X_{ij}}|_{X_{ij}\cap X_{ab}}=\O_{X_{ab}}|_{X_{ij}\cap X_{ab}}$. The conditions of Lemma \ref{MAG.4.13} apply and there exists a unique structure $\O_{V\xx W}$ of a concrete equivariety over $k$ on $V\xx W$, which restricts to the canonical structure on $V_i\xx W_j$ for all $i,j$. We now have to check that 

\begin{lem}
The product structure $\O_{V\xx W}$ does not depend on the choice of the covers $V=\bigcup_i V_i$ and $W=\bigcup_j W_j$. In other words, if $V=\bigcup_x V'_x$ and $W=\bigcup_y W'_y$ are other open covers by affine subvarieties which induce the same structures on $V$ and $W$, then the product structure defined by these covers is $\O_{V\xx W}$.
\end{lem}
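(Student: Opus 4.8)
The plan is to reduce everything to a common affine refinement of the two pairs of covers and then invoke the uniqueness of the glued structure (Lemma \ref{UNIGLU}). Write $\O$ for the product structure built from $V=\bigcup_i V_i$, $W=\bigcup_j W_j$ and $\O'$ for the one built from $V=\bigcup_x V'_x$, $W=\bigcup_y W'_y$; by hypothesis $\{V_i\}$ and $\{V'_x\}$ are both open covers of the equivariety $(V,\O_V)$ with $\O_V$ restricting to the respective induced structures, and similarly for $W$. First I would use Lemma \ref{OPENBASIS}: since the affine open subsets form a basis, I can cover each (open) intersection $V_i\cap V'_x$ by affine open subsets of $V$, and collecting all of these over all pairs $(i,x)$ yields an affine open cover $\{V''_\alpha\}$ of $V$ with the property that each $V''_\alpha$ lies in some $V_i$ and in some $V'_x$. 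Doing the same for $W$ gives $\{W''_\beta\}$. The family $\{V''_\alpha\xx W''_\beta\}$ is then a set-theoretic cover of $V\xx W$ whose members carry canonical affine-product structures $\O_{V''_\alpha\xx W''_\beta}$ (affine algebraic equivarieties by Proposition \ref{AFFPROD}), with pairwise intersections $(V''_\alpha\cap V''_{\alpha'})\xx(W''_\beta\cap W''_{\beta'})$ open for the Zariski topology (which is finer than the product topology), and these structures agree on overlaps by exactly the same computation --- via Lemma \ref{IPROAFF} on affine sub-opens --- that was carried out just above for the cover $\{V_i\xx W_j\}$. Hence Lemma \ref{MAG.4.13} produces a unique concrete equivariety structure $\O''$ on $V\xx W$ restricting to $\O_{V''_\alpha\xx W''_\beta}$ on each piece.

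Next I would check that $\O=\O''$. Fixing $\alpha,\beta$ and choosing $i,j$ with $V''_\alpha\subs V_i$, $W''_\beta\subs W_j$, the set $V''_\alpha$ is an affine open subset of the concrete affine equivariety $V_i$ and $W''_\beta$ one of $W_j$, so Lemma \ref{IPROAFF} gives $\O_{V''_\alpha\xx W''_\beta}=\O_{V_i\xx W_j}|_{V''_\alpha\xx W''_\beta}$; since $\O|_{V_i\xx W_j}=\O_{V_i\xx W_j}$ by construction of $\O$, restricting once more yields $\O|_{V''_\alpha\xx W''_\beta}=\O_{V''_\alpha\xx W''_\beta}$. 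Thus $(V\xx W,\O)$ is a concrete equivariety covered by the opens $V''_\alpha\xx W''_\beta$ (each is Zariski-open in some $V_i\xx W_j$, itself open in $V\xx W$) on which $\O$ restricts to the canonical structures, so by the uniqueness clause of Lemma \ref{UNIGLU} the structure induced by this cover is $\O$ itself, i.e. $\O=\O''$. The identical argument with $V''_\alpha\subs V'_x$ and $W''_\beta\subs W'_y$ gives $\O'=\O''$, whence $\O=\O'$, as desired.

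I do not expect any genuine obstacle here: the statement is essentially a coherence/well-definedness check. The one point that needs a little care is the existence and adequacy of the common affine refinement --- that each $V''_\alpha\cap V''_{\alpha'}$ and $W''_\beta\cap W''_{\beta'}$ is open (immediate) and, more importantly, that the canonical product structures on the refined pieces are mutually compatible on overlaps, which is \emph{not} literally an instance of Lemma \ref{IPROAFF} (the overlaps need not be affine) but follows verbatim from the patching argument already given just before this lemma. Everything else is bookkeeping with restrictions of sheaves and the two uniqueness lemmas.
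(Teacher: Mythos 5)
Your argument is correct. It uses the same essential ingredients as the paper --- Lemma \ref{OPENBASIS} to produce small affine opens, Lemma \ref{IPROAFF} to compare affine product structures on affine sub-opens, the overlap-compatibility computation preceding the lemma, and the uniqueness of glueing (Lemma \ref{UNIGLU}) --- but organizes them differently. The paper applies Lemma \ref{UNIGLU} once, directly to the primed cover: it shows each $V'_x\xx W'_y$ is open for $\O_{V\xx W}$ and that $\O_{V\xx W}|_{V'_x\xx W'_y}=\O_{V'_x\xx W'_y}$, the latter by a local shrinking argument that passes down to an affine open $O^1_{P,Q}\xx O^2_{P,Q}$ inside $(V'_x\cap V_i)\xx(W'_y\cap W_j)$ and then back up to $V'_x\xx W'_y$, using Lemma \ref{IPROAFF} in both directions. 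You instead globalize that local shrinking into an explicit common affine refinement $\{V''_\alpha\}$, $\{W''_\beta\}$, run the glueing construction a third time to get an intermediate structure $\O''$, and apply Lemma \ref{UNIGLU} twice to identify both $\O$ and $\O'$ with $\O''$; this lets you use Lemma \ref{IPROAFF} only in the ``downward'' direction, at the cost of having to reverify the overlap compatibility for the refined cover --- which, as you correctly note, is not an instance of Lemma \ref{IPROAFF} but is word-for-word the patching computation carried out before the lemma, since that computation uses nothing about the covers beyond their members being open affine subvarieties. Both routes are sound; yours is slightly more modular (the refinement trick is reusable), the paper's is slightly shorter in that it avoids constructing the third structure.
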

\begin{proof}
By Lemma \ref{UNIGLU}, it suffices to show that for each pair $(x,y)$ :\\
i) $V'_x\xx W'_y$ is open in $V\xx W$ for the topology on $V\xx W$ induced by the original covers\\
ii) $\O_{V\xx W}|_{V'_x\xx W'_y}=\O_{V'_x\xx W'_y}$.\\
i) For all $(x,y)$ and $(i,j)$, we have $(V'_x\xx W'_y)\cap (V_i\xx W_j)=(V'_x\cap V_i)\xx (W'_y\cap W_j)$, and as $V'_x\cap V_i$ is open in $V_i$ and $W'_y\cap W_j$ is open in $W_j$, this intersection is open in $V\xx W$ by definition of the Zariski topology, so $V'_x\xx W'_y$ is open in $V\xx W$ as a union of open subsets.\\
ii) Write $Y_{xy}=V'_x\cap W'_y$, and let $U\subs Y_{xy}$ be an open subset and $s\in \O_{V\xx W}(U)$. For each pair $(i,j)$, we have $s|_{U\cap X_{ij}}\in \O_{V\xx W}(U\cap X_{ij})=\O_{V_i\xx W_j}(U\cap X_{ij})$ so by definition, for each $(P,Q)\in U\cap X_{ij}$ there is an open neighbourhood $U_{P,Q}\subs U\cap X_{ij}$ of $(P,Q)$ and some $f_a,l_b\in J(V_i)$, $g_a,m_b\in J(W_j)$, with $$s|_{U_{P,Q}}\equiv \dfrac{\sum_a f_a * g_a}{\sum_b l_b * m_b}.$$ Shrinking $U_{P,Q}$ if necessary, we may assume by Lemma \ref{OPENBASIS} that there exist affine open subsets $O^1_{P,Q}\subs V'_x\cap V_i$ and $O^2_{P,Q}\subs W'_y\cap W_j$ of $V$ and $W$ respectively, and such that $$s|_{U_{P,Q}}\equiv \dfrac{\sum_a f_a|_{O^1_{P,Q}} * g_a|_{O^2_{P,Q}}}{\sum_b l_b|_{O^1_{P,Q}} * m_b|_{O^2_{P,Q}}}$$ and by Lemma \ref{IPROAFF}, we have $s|_{U_{P,Q}}\in \O_{O^1_{P,Q}\xx O^2_{P,Q}}(U_{P,Q})$. As $V'_x$ and $W'_y$ are affine open subvarieties of $V$ and $W$ themselves, by the same lemma we have $s|_{U_{P,Q}}\in \O_{Y_{xy}}(U_{P,Q})$ for all $(P,Q)\in U\cap X_{ij}$, and as the $U_{P,Q}$'s cover $U\cap X_{ij}$, it follows that $s|_{U\cap X_{ij}}\in \O_{Y_{xy}}(U\cap X_{ij})$ for all $(i,j)$, whence finally $s\in \O_{Y_{xy}}(U)$. The proof of the converse (i.e. that $\O_{Y_{xy}}(U)\subs \O_{V\xx W}(U)$) follows the same lines, so (ii) is valid and the lemma is proved.
\end{proof}

\noindent It remains to show that with this structure, $(V\xx W,\O_{V\xx W})$ is a product of $V$ and $W$ in the category of equivarieties over $k$.

\subsection*{Projections and the product property}
We need the following two slightly different generalisations of \cite{MAG}, Corollary 2.19.

\begin{lem}\label{MAG.4.19}
If $U,V$ and $W$ are concrete affine algebraic equivarieties over $k$ and $\phi:U\to V\xx W$ is a map, then $\phi$ is regular if and only if $p\circ\phi$ and $q\circ \phi$ are regular, for $p:V\xx W\to V$ and $q:V\xx W\to W$ the canonical projection maps.
\end{lem}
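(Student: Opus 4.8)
The plan is to prove the equivalence by the standard universal-property argument, using the fact that $(V\xx W,\O_{V\xx W})$ has already been shown to be a product in $EVar^a_k$ (Proposition \ref{AFFPROD}), together with Lemma \ref{OPENPROJ}.

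First, the easy direction: if $\phi$ is regular, then since $p$ and $q$ are regular by Lemma \ref{OPENPROJ} and the composition of regular maps of concrete equivarieties is regular (this is immediate from the definition, or from Proposition \ref{CONCEQ} together with the fact that morphisms of equivarieties compose), both $p\circ\phi$ and $q\circ\phi$ are regular.

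For the converse, suppose $p\circ\phi:U\to V$ and $q\circ\phi:U\to W$ are regular. By Proposition \ref{AFFPROD}, $(V\xx W,\O_{V\xx W})$ is a product of $V$ and $W$ in $EVar^a_k$, so the pair of regular maps $(p\circ\phi,q\circ\phi):U\to V,W$ factors through a \emph{unique} regular map $\theta:U\to V\xx W$ with $p\circ\theta=p\circ\phi$ and $q\circ\theta=q\circ\phi$. It then suffices to show $\theta=\phi$ \emph{as maps of sets}, since then $\phi=\theta$ is regular. But for every $u\in U$, writing $\theta(u)=(P,Q)$ and $\phi(u)=(P',Q')$, we have $P=p(\theta(u))=p(\phi(u))=P'$ and likewise $Q=Q'$, because $p$ and $q$ are literally the set-theoretic first and second projections on $V\xx W$; hence $\theta=\phi$ and $\phi$ is regular.

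The main obstacle — and it is a mild one — is the interface between the categorical statement of Proposition \ref{AFFPROD} (product in $EVar^a_k$, phrased in terms of morphisms of locally ringed spaces) and the \emph{concrete} formulation in terms of regular maps used here: one must invoke Proposition \ref{CONCEQ} to identify morphisms of concrete equivarieties with regular maps, so that the universal property yields a \emph{regular} $\theta$ and not merely a morphism of locally ringed spaces. Once that identification is in place, the argument is purely formal, the key point being that the projections are, by construction of $\O_{V\xx W}$, the honest Cartesian projections, so the abstractly produced map $\theta$ coincides on underlying sets with the given $\phi$.
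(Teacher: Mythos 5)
Your proposal is correct and follows exactly the paper's own argument: the forward direction by composition with the regular projections of Lemma \ref{OPENPROJ}, and the converse by invoking the universal property of Proposition \ref{AFFPROD} to get a unique regular $\theta$ and then observing that $\theta=\phi$ set-theoretically because the underlying set of $V\xx W$ is the Cartesian product. Your extra remark about using Proposition \ref{CONCEQ} to identify morphisms with regular maps is a sensible clarification but does not change the substance.
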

\begin{proof}
As $p$ and $q$ are regular by Lemma \ref{OPENPROJ}, if $\phi$ is regular then $p\circ \phi$ and $q\circ\phi$ are regular by composition. Conversely, if $p\circ\phi:U\to V$ and $q\circ\phi:U\to W$ are regular, as $(V\xx W,\O_{V\xx W})$ is a product by Proposition \ref{AFFPROD}, there exists a unique regular map $\theta:U\to V\xx W$ such that $p\circ\theta=p\circ\phi$ and $q\circ\theta=q\circ\phi$. Now the underlying set of $V\xx W$ is the set-theoretic cartesian product of $V$ and $W$, so $\theta=\phi$, and therefore $\phi$ is regular.
\end{proof}

\noindent For all $i,j$, consider the projection maps $p_i,q_j:V_i\xx W_j\to V,W$ : as $V_i$ is open in $V$ and $W_j$ is open in $W$, they are obviously regular, so by Lemma \ref{MAG.4.13} again, the projection maps $p,q:V\xx W\to V,W$, obtained by glueing, are regular as well. It is therefore possible to generalise the preceding lemma to

\begin{lem}\label{REGPROD}
If $U,V$ and $W$ are concrete equivarieties and $\phi:U\to V\xx W$ is a map, then $\phi$ is regular if and only if $p\circ \phi$ and $q\circ\phi$ are regular.
\end{lem}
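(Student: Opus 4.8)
The plan is to reduce the nontrivial implication to the affine case of Lemma~\ref{MAG.4.19} by a double localisation, first on the target $V\xx W$ and then on the source $U$. The forward implication is immediate: if $\phi$ is regular, then since the projections $p,q:V\xx W\to V,W$ are regular (as just noted before the statement), the composites $p\circ\phi$ and $q\circ\phi$ are regular.

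For the converse, suppose $f:=p\circ\phi$ and $g:=q\circ\phi$ are regular. I would first fix open covers $V=\bigcup_i V_i$ and $W=\bigcup_j W_j$ by affine algebraic subvarieties, so that $V\xx W=\bigcup_{i,j}V_i\xx W_j$ with each $V_i\xx W_j$ open in $V\xx W$ and, by the very construction of the product sheaf, $\O_{V\xx W}|_{V_i\xx W_j}=\O_{V_i\xx W_j}$. Since $f$ and $g$ are continuous, each $\phi^{-1}(V_i\xx W_j)=f^{-1}(V_i)\cap g^{-1}(W_j)$ is open in $U$, and these sets cover $U$; by Lemma~\ref{OPENBASIS} I would refine this to a cover $U=\bigcup_l U_l$ by affine open subsets of $U$, each $U_l$ contained in some $\phi^{-1}(V_i\xx W_j)$.

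Fix $l$ and such $i,j$. Then $\phi$ restricts to a set map $\phi_l:U_l\to V_i\xx W_j$, and composing with the two projections of $V_i\xx W_j$ yields $f|_{U_l}$ and $g|_{U_l}$, viewed as maps into the open subsets $V_i$ and $W_j$; these are regular, being restrictions of the regular maps $f$ and $g$ to an open subset of the source with image in an open subset of the target. As $U_l$, $V_i$ and $W_j$ are concrete affine algebraic equivarieties, Lemma~\ref{MAG.4.19} shows that $\phi_l$ is regular; and since $\O_{V\xx W}$ restricts to $\O_{V_i\xx W_j}$ on $V_i\xx W_j$, the map $\phi_l$ is also regular as a map $U_l\to V\xx W$. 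Finally the $\phi_l$ are all restrictions of the single set map $\phi$, hence agree on overlaps, so by the glueing clause of Lemma~\ref{MAG.4.13} — applicable because, by Lemma~\ref{UNIGLU}, the equivariety structure on $U$ coincides with the one glued from the $U_l$ — they patch into a regular map $U\to V\xx W$, which can only be $\phi$ itself.

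The step requiring attention is the choice of localisations: Lemma~\ref{MAG.4.19} only applies to \emph{affine} pieces, so one cannot merely restrict $\phi$ to the (in general non-affine) opens $\phi^{-1}(V_i\xx W_j)$; it is precisely the identification $\O_{V\xx W}|_{V_i\xx W_j}=\O_{V_i\xx W_j}$, built into the definition of the product sheaf, together with the passage to an affine open cover of $U$, that makes the reduction to the affine lemma legitimate. Everything else is a routine use of the sheaf property and of the fact that regularity is local on the source (so that continuity of $\phi$ and the pullback condition need only be checked on the $U_l$).
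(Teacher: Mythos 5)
Your proposal is correct and follows essentially the same route as the paper: cover $V\xx W$ by the affine products $V_i\xx W_j$, pull back to an affine open cover of $U$ via continuity of $p\circ\phi$ and $q\circ\phi$, apply the affine case (Lemma \ref{MAG.4.19}) on each piece, and glue via Lemma \ref{MAG.4.13}. The only cosmetic difference is that the paper glues in two stages (first over each $\phi^{-1}(V_i\xx W_j)$, then over $U$) whereas you glue the whole affine cover of $U$ at once, which changes nothing of substance.
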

\begin{proof}
It suffices to show, as in Lemma \ref{MAG.4.19}, that if $p\circ\phi$ and $q\circ\phi$ are regular, then $\phi$ is regular. In this case, write $V\xx W=\bigcup_{ij} (V_i\xx W_j)$ as before, and let $U_{ij}:=\phi^{-1} (V_i\xx W_j)$ for all $(i,j)$ : as $U_{ij}=(p\phi)^{-1}(V_i)\cap (q\phi)^{-1}(W_j)$ and $p\phi$ and $q\phi$ are continuous by hypothesis, $U_{ij}$ is open, so $U=\bigcup_{ij} U_{ij}$ is an open cover of $U$. Choose a pair $(i,j)$ : as $U_{ij}$ is open, it is a union $U_{ij}=\bigcup_l U_l$ of affine open subsets by Lemma \ref{OPENBASIS}, and we let $\phi_l:=\phi|_{U_l}=\phi_{ij}|_{U_l}$ for each $l$, where $\phi_{ij}=\phi|_{U_{ij}}$. By hypothesis, for each $l$ the maps $p_{ij}\circ \phi_l$ and $q_{ij}\circ\phi_l$ are regular as restrictions of $p\circ \phi$ and $q\circ\phi$ to $U_l$, respectively, so by Lemma \ref{MAG.4.19} (the affine case), $\phi_l$ is regular. Now the restriction of $\phi_{ij}$ to each $U_l$ is regular, so $\phi_{ij}$ is regular by the last part of Lemma \ref{MAG.4.13}, and for the same reason, $\phi$ is regular.
\end{proof}

\begin{prop}\label{MAG.4.21}
If $V$ and $W$ are concrete equivarieties over $k$, then $(V\xx W,\O_{V\xx W})$ is a product of $V$ and $W$ in the category of equivarieties over $k$.
\end{prop}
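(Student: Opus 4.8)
The plan is to verify directly the universal property of the product, using Lemma \ref{REGPROD} to produce the factorising map and Proposition \ref{CONCEQ} to pass from the concrete setting back to the full category of equivarieties. First I would recall that the two projections $p,q:V\xx W\to V,W$ are regular --- this was already noted just after Lemma \ref{MAG.4.19}, since the projections $p_i,q_j:V_i\xx W_j\to V,W$ are regular and $p,q$ arise from them by the glueing of Lemma \ref{MAG.4.13} --- so that $(V\xx W,\O_{V\xx W})$ equipped with $p,q$ is at least a cone over the discrete diagram $\{V,W\}$ in $EVar_k$.

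Next, since by Proposition \ref{CONCEQ} every equivariety over $k$ is isomorphic to a concrete one and, between concrete equivarieties, the morphisms of equivarieties are precisely the regular maps (with the sheaf component $f^\#$ determined by the underlying continuous map, as recorded in the remark following the definition of a regular map), it suffices to check the universal property against an arbitrary \emph{concrete} equivariety $U$; the general case then follows by conjugating with an isomorphism $U\cong U'$ onto a concrete model and transporting the unique factorisation back. So given regular maps $f:U\to V$ and $g:U\to W$, I would use that the underlying set of $V\xx W$ is the genuine Cartesian product of the underlying sets of $V$ and $W$ (this is how $\O_{V\xx W}$ was built) to obtain the one and only set-theoretic map $\phi:U\to V\xx W$, $u\mapsto(f(u),g(u))$, satisfying $p\circ\phi=f$ and $q\circ\phi=g$. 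Then Lemma \ref{REGPROD}, applied to this $\phi$ --- whose composites $p\circ\phi=f$ and $q\circ\phi=g$ are regular by hypothesis --- immediately gives that $\phi$ itself is regular. Uniqueness is automatic: any regular $\phi'$ with $p\circ\phi'=f$ and $q\circ\phi'=g$ has the same underlying map as $\phi$, hence coincides with it as a regular map and therefore as a morphism of equivarieties.

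I do not expect a genuine obstacle here: the real content --- the construction of $\O_{V\xx W}$ by glueing, its independence of the chosen affine covers, the regularity of the projections, and above all Lemma \ref{REGPROD} --- has all been carried out already. The one point that needs a little care is the legitimacy of the reduction to a concrete $U$, i.e. that a product in the subcategory of concrete equivarieties is a product in $EVar_k$; but this is exactly what Proposition \ref{CONCEQ} supplies, since it exhibits that subcategory as meeting every isomorphism class and computes its Hom-sets correctly.
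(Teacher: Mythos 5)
Your proposal is correct and follows essentially the same route as the paper: define the set-theoretic product map $\theta(u)=(\phi(u),\psi(u))$, note $p\circ\theta$ and $q\circ\theta$ are the given regular maps, invoke Lemma \ref{REGPROD} to conclude $\theta$ is regular, and get uniqueness from the fact that the underlying set of $V\xx W$ is the Cartesian product. Your extra care about reducing to a concrete test object $U$ via Proposition \ref{CONCEQ} is a point the paper leaves implicit, but it is the same argument.
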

\begin{proof}
Let $U$ be a concrete equivariety over $k$, and $\phi:U\to V$, $\psi:U\to W$ two regular maps. Consider the product map $\theta:U\to V\xx W$, and the projection maps $p:V\xx W\to V$, $q:V\xx W\to W$, which are regular by what precedes : by definition of $\theta$, we have $p\circ\theta=\phi$ and $q\circ\theta=\psi$, so by Lemma \ref{REGPROD}, the map $\theta$ is regular. It is the only possible map such that $p\circ\theta=\phi$ and $q\circ\theta=\psi$, so $(V\xx W,\O_{V\xx W})$ is indeed a product of $V$ and $W$ in the category of equivarieties over $k$.
\end{proof}

\begin{cor}\label{REGPROD'}
If $\phi:V\to W$ and $\psi:X\to Y$ are regular maps of concrete equivarities, then the product map $\phi\xx \psi:V\xx X\to W\xx Y$ is regular. 
\end{cor}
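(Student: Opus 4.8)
The plan is to deduce this directly from the universal characterisation of regular maps into a product, namely Lemma~\ref{REGPROD}, applied with target the product $W\xx Y$ (which is indeed a product of $W$ and $Y$ by Proposition~\ref{MAG.4.21}). Write $\pi_V:V\xx X\to V$ and $\pi_X:V\xx X\to X$ for the canonical projections of the source product, and $p:W\xx Y\to W$, $q:W\xx Y\to Y$ for those of the target. All four of these are regular: $\pi_V,\pi_X$ by the discussion preceding Proposition~\ref{MAG.4.21}, which reduces their regularity (via glueing and the last part of Lemma~\ref{MAG.4.13}) to the affine case treated in Lemma~\ref{OPENPROJ}, and likewise $p,q$.

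Next I would record the set-theoretic identities $p\circ(\phi\xx\psi)=\phi\circ\pi_V$ and $q\circ(\phi\xx\psi)=\psi\circ\pi_X$, as maps $V\xx X\to W$ and $V\xx X\to Y$ respectively: these hold on the nose because $\phi\xx\psi$ is by definition $(v,x)\mapsto(\phi(v),\psi(x))$, the underlying set of each product in Sections~\ref{OPAFF}--\ref{PRODEQ} is literally the Cartesian product of the underlying sets, and the projections are the literal set projections. The right-hand sides are composites of regular maps ($\pi_V$ followed by $\phi$, resp.\ $\pi_X$ followed by $\psi$), hence regular; therefore $p\circ(\phi\xx\psi)$ and $q\circ(\phi\xx\psi)$ are both regular.

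Applying Lemma~\ref{REGPROD} with the map $\phi\xx\psi:V\xx X\to W\xx Y$ then yields that $\phi\xx\psi$ is regular, which is the claim. There is essentially no obstacle here: the only point requiring any care is the verification of the two commutativity identities above, which is immediate from the explicit set-theoretic description of the products. Equivalently, one could bypass even this and simply note that, by Proposition~\ref{MAG.4.21}, $\phi\xx\psi$ is the unique map furnished by the universal property of $W\xx Y$ applied to the regular maps $\phi\circ\pi_V$ and $\psi\circ\pi_X$, and is therefore automatically a morphism of equivarieties.
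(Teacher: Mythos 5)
Your proof is correct and follows essentially the same route as the paper: the paper applies the universal property of $W\xx Y$ (Proposition \ref{MAG.4.21}) to the regular composites with the source projections and then identifies the resulting unique map with $\phi\xx\psi$ set-theoretically, which is exactly the alternative you note at the end and is interchangeable with your primary invocation of Lemma \ref{REGPROD}. No gaps.
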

\begin{proof}
Let $p,q:V\xx X\to V,X$ and $\pi,\rho:W\xx Y\to W,Y$ be the canonical projection maps of the two products. By composition, the maps $\phi\circ p:V\xx X\to V\to W$ and $\psi\circ q:V\xx X\to X\to Y$ are regular, and by Proposition \ref{MAG.4.21} there exists a unique regular map $\theta:V\xx X\to W\xx Y$ such that $\pi\circ \theta=\phi\circ p$ and $\rho\circ\theta=\psi\circ q$. In particular, for every $(P,Q)\in V\xx X$ we have $\theta(P,Q)=(\phi(P),\psi(Q))$, so $\theta=\phi\xx\psi$, which is therefore regular. 
\end{proof}

\section{Subvarieties of concrete equivarieties}\label{SUBVAR}
\subsection*{Immersions and subvarieties}
If $(V,\O_V)$ is a concrete equivariety, we may generalise the restriction of $\O_V$ to open subsets of $V$, as follows. If $S\subs V$ is any subset, define the \emph{restriction of $\O_V$ to $S$}, noted $\O_V|_S=\O_S$, as the following sheaf : if $U\subs S$ is an open subset for the induced topology, we let $\O_S(U)=\{f:U\to k : \forall P\in U, \exists O\ni P, O$ open in $V, \exists g\in \O_V(O),\ f|_{O\cap U}=g|_{O\cap U}\}$ (in other words, $\O_S(U)$ is the set of functions $U\to k$ which are \emph{locally} the restriction of a function of $\O_V$). It is obviously a subpresheaf of the sheaf of $k$-valued functions, and if $U=\bigcup_I U_i$ is an open cover in $S$ and $(f_i\in \O_S(U_i) : i\in I)$ is a family of compatible sections, they together define a function $f:U\to k$; now if $P\in U$, say $P\in U_i$, there exist an ambiant open $O\ni P$ and $g\in \O_V(O)$ such that $f|_{O\cap U_i}=f_i|_{O\cap U_i}=g|_{O\cap U_i}$, and if we write $U_i=O_i\cap U$ for some open $O_i\subs V$, $W=O\cap O_i$ and $h=g|_{W}\in \O_V(W)$, we have $P\in W$ and we get $h|_{U\cap W}=g|_{O\cap U_i}=f|_{O\cap U_i}=f|_{U\cap W}$, whence $f\in \O_S(U)$ and $\O_S$ is indeed a sheaf.
\begin{rem}
This is the usual notion of the restriction of a sheaf (see for instance \cite{HAG}, II.1 for the general definition); working with concrete equivarieties simplifies its description, which will be useful for the study of open, closed, and locally closed subvarieties.
\end{rem}

\noindent Recall that a subset $S$ of a topological space $X$ is \emph{locally closed}, if it is the intersection of an open and a closed subset of $X$, or equivalently if every point $P\in S$ has an open neighbourhood $O$ in $X$ such that $O\cap S$ is (relatively) closed in $O$.

\begin{defi}
Let $V$ and $W$ be equivarieties.\\
i) A \emph{subvariety of $V$} is a subset $S$ together with the restricted sheaf $\O_S:=\O_V|_S$, and such that $(S,\O_S)$ is an equivariety.\\
ii) An \emph{immersion} of $V$ into $W$ is a regular map $i:V\into W$, such that $i(V)$ is a subvariety and $i$ is an isomorphism between $V$ and $(i(V),\O_W|_{i(V)})$. We say that $i$ is \emph{open} if $i(V)$ is, and that $i$ is \emph{closed}, if $i(V)$ is.
\end{defi}

\begin{ex}\label{AFFCLO}
Here is a fundamental example of a closed immersion. If $A$ is an affine $*$-algebra and $I$ is an ideal of $A$, we have the map $\phi:Y:=K(A/I) \to X:=K(A)$ (for $K$ the functor $Spm$ as in \cite{EQAG}, Section 4) defined as $\phi:=K(\pi)$, for $\pi:A\onto A/I$ the canonical projection. The corestriction $\psi$ of $\phi$ to its image is clearly a homeomorphism and using the standard definition of the structural sheaves on $X$ and $Y$, it is clear that $(\psi,\psi^\#)$ is an isomorphism, for $\psi^\#:\O_{\phi(Y)}\to \psi_*\O_Y$ defined as in \cite{EQAG}.
\end{ex}

\noindent Whereas an open subset of an equivariety obviously inherits the structure of an equivariety (and so is a subvariety in our sense), it is not obvious from the definitions that every closed subset inherits such a structure.

\subsection*{Locally closed subvarieties}
In this section we will show that every locally closed subset of an equivariety is itself an equivariety, i.e. a subvariety.

\begin{lem}\label{CLSVAR}
If $(V,\O_V)$ is a concrete affine equivariety and $X\subs V$ is a closed subset, then $(X,\O_V|_X)$ is an affine equivariety and the restriction $\phi|_X$ of every chart $\phi:V\cong W\subs k^n$ to $(X,\O_V|_X)$ is itself a regular isomorphism.
\end{lem}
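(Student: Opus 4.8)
The plan is to reduce the whole statement to a single core fact about affine subvarieties of $k^n$: if $W\subs k^n$ is an affine algebraic subvariety and $X'\subs W$ is closed, then $X'$ is an affine algebraic subvariety of $k^n$ and $\O_W|_{X'}=\O_{X'}$ as concrete sheaves on $X'$. Granting this, I would fix an \emph{arbitrary} chart $\phi:V\cong W\subs k^n$; since $\phi$ is a homeomorphism and $X$ is closed in $V$, the image $X':=\phi(X)$ is closed in $W$, hence closed in $k^n$ because $W$ is, so $X'$ is an affine algebraic subvariety and $(X',\O_{X'})$ an affine algebraic equivariety. The restriction $\phi|_X:X\to X'$ is then a homeomorphism, and since the restriction of a regular map of concrete equivarieties to subvarieties of source and target is again regular — an immediate unwinding of the definitions of the restricted sheaf and of a regular map — both $\phi|_X:(X,\O_V|_X)\to(X',\O_W|_{X'})$ and its inverse $\phi^{-1}|_{X'}$ are regular, i.e. $\phi|_X$ is a regular isomorphism. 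By the core fact $(X',\O_W|_{X'})=(X',\O_{X'})$, so $(X,\O_V|_X)$ is isomorphic to the affine algebraic equivariety $(X',\O_{X'})$ — hence is an affine equivariety — and $\phi|_X$ is the asserted regular isomorphism; as $\phi$ was an arbitrary chart, both assertions of the lemma follow.

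It remains to establish the core fact $\O_W|_{X'}=\O_{X'}$, and here I would invoke the concrete description of the structural sheaf of an affine subvariety of $k^n$ from \cite{EQAG}, Theorem 3.17: for $Y\subs k^n$ an affine algebraic subvariety and $U\subs Y$ open, a function $f:U\to k$ belongs to $\O_Y(U)$ iff every $P\in U$ has an open neighbourhood $U_P\subs U$ in $Y$ together with polynomial functions $g,h$ on $k^n$ such that $h$ is nowhere zero on $U_P$ and $f|_{U_P}=(g/h)|_{U_P}$. The inclusion $\O_W|_{X'}\subs\O_{X'}$ is then immediate: a section of $\O_W|_{X'}$ is near each point the restriction to $X'$ of a section of $\O_W$, which by Theorem 3.17 (applied to $W$) is locally a quotient $g/h$ of polynomials with $h$ nonvanishing, so its restriction to $X'$ is too, and Theorem 3.17 applied to $X'$ concludes. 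For the reverse inclusion, given $f\in\O_{X'}(U)$ and $P\in U$, I would pick $U_P$, $g$, $h$ as above, choose an open $O''\subs W$ with $O''\cap X'=U_P$, and put $O:=O''\cap D_W(h)$ with $D_W(h):=\{Q\in W:h(Q)\neq0\}$; then $O$ is open in $W$, contains $P$, carries the section $s:=(g/h)|_O\in\O_W(O)$, and $O\cap X'=O''\cap D_W(h)\cap X'=U_P$ since $h$ is nonvanishing on $U_P$ forces $U_P\subs D_W(h)$, so $s|_{O\cap X'}=(g/h)|_{U_P}=f|_{U_P}$. Thus $f$ is, near each of its points, the restriction to $X'$ of a section of $\O_W$, i.e. $f\in(\O_W|_{X'})(U)$.

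The only point requiring real care is this core fact, and even there the work is routine bookkeeping of local quotient representations once Theorem 3.17 of \cite{EQAG} is available; the single substantive observation is that a denominator $h$ witnessing regularity of $f$ on a neighbourhood in $X'$ is automatically nonvanishing on the ambient distinguished open $D_W(h)\subs W$, so intersecting an arbitrary ambient extension $O''$ of that neighbourhood with $D_W(h)$ yields a genuine ambient witness without changing the values taken on $X'$. The reduction in the first paragraph relies only on the fact that charts of a concrete equivariety are regular isomorphisms (Proposition \ref{CONCEQ}) and that restrictions of regular maps to subvarieties are again regular, both of which are essentially formal.
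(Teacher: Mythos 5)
Your proposal is correct and follows essentially the same route as the paper: both reduce to the core fact that for a closed subset of an affine subvariety of $k^n$ the restricted sheaf coincides with the intrinsic structural sheaf (using the local quotient description of sections from Theorem 3.17 of \cite{EQAG}), and then transport the statement through an arbitrary chart. The only cosmetic difference is that you package the chart-transport step as a general principle (restrictions of regular maps to subvarieties of source and target are again regular), where the paper verifies the two directions of regularity of $\phi|_X$ by hand; the underlying computation is the same.
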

\begin{proof}
First we show that if $V\subs k^n$ is an affine subvariety, then $(X,\O_V|_X)$ is an affine equivariety, by proving that $\O_V|_X$ is the structural sheaf of $X$ as considered itself as a subvariety of $k^n$. Let $U\subs X$ be an open subset, and $\O_X$ the structural sheaf of $X$ as considered as a subvariety of $k^n$. If $f\in \O_X(U)$, for each $P\in U$ there exist an open neighbourhood $U_P\subs U$ of $P$ in $X$ and $G,H\in k[\ov X]$ such that $U_P\subs D_X(H)$ and $f|_{U_P}\equiv G/H$. Let $O=D_V(H)$, an open subset of $V$ : on $O$, $G/H$ defines a regular function $g\in \O_V(O)$ say, and we have $g|_{O\cap U_P}:Q\in O\cap U_P\mapsto G(Q)/H(Q)=f(Q)$, so $g|_{O\cap U_P}=f|_{O\cap U_P}$, whence $f|_{U_P}\in \O_V|_X(U_P)$ and by the local character of a sheaf, $f\in \O_V|_X(U)$ : we have $\O_X(U)\subs \O_V|_X(U)$. The other way round, if $f\in \O_V|_X(U)$, for each $P\in U$ let $O_P\subs V$ be an open neighbourhood of $P$ in $V$ and $g\in \O_V(O_P)$ such that $g|_{O_P\cap U}=f|_{O_P\cap U}$ : shrinking $O_P$ if necessary, there exist $G,H\in k[\ov X]$ such that $g|_{O_P}\equiv G/H$; now $O_P\cap U$ is open in $X$, and for each $Q\in O_P\cap U$ we have $f(Q)=g(Q)=G(Q)/h(Q)$, so that $f|_{O_P\cap U}\equiv G/H$, which shows that $f\in \O_X(U)$, and thus $\O_X(U)=\O_V|_X(U)$ and the two sheaves coincide.
Now let $\phi:V\cong W\subs k^n$ be a regular isomorphism with a subvariety, and write $\O_X=\O_V|_X$ as before, and $\phi(X):=Y\subs W$, a closed subset : by what precedes we have $\O_Y=\O_W|_Y$. Let $U\subs Y$ be an open subset : if $s\in \O_Y(U)$ and $P\in \phi^{-1}U$ then $\phi(P)=Q\in U$ and there exists an open neighbourhood $U_Q\subs U$ of $Q$ in $Y$ and $G,H\in k[\ov X]$ such that $s|_{U_Q}\equiv G/H\equiv g/h$, where $g=G+\ms I(W)$ and $h=H+\ms I(W)$. For each $P'\in \phi^{-1} U_Q$, we have $s\circ \phi(P')=s(\phi(P'))=g(\phi(P'))/h(\phi(P'))=(g/h)\circ \phi|_{\phi^{-1}U_Q} (P')$, so $s\circ \phi|_{\phi^{-1}U_Q}=(g/h\circ \phi|_{\phi^{-1}U_Q})$, and as $\phi$ is regular, so that $(g/h)\circ \phi|_{\phi^{-1}U_Q}\in \O_V(\phi^{-1} U_Q)$, this means that $s\circ \phi|_{\phi^{-1} U}\in \O_X(\phi^{-1} U)$ by definition of $\O_V|_X$ and because the $\phi^{-1}(U_Q)$'s cover $\phi^{-1}U$, and all this shows that $\phi|_X:X\cong Y$ is a regular map. Conversely, if $t\in \O_X(U)$, where $U\subs X$ is open, let $Q\in \phi(U)$ and $P=\phi^{-1}Q$ : by definition of the restriction $\O_X$, there exist an open neighbourhood $O$ of $P$ in $V$ and $f\in \O_V(O)$ such that $f|_{U\cap O}=t|_{U\cap O}$; let $O'=\phi(O)$, an open neighbourhood of $Q$ in $W$ : we have $f\circ \phi^{-1}|_{O'}\in \O_V(O')$, because $\phi^{-1}$ is regular, and $(f\circ \phi^{-1}|_{O'})|_{\phi(U)\cap O'}=(t\circ \phi^{-1}|_{O'})|_{\phi(U)\cap O'}$. By definition, this means that $t\circ \phi^{-1}|_{\phi(U)}\in \O_W|_Y(\phi(U))=\O_Y(\phi(U))$, whence $(\phi|_X)^{-1}$ is regular as well, and $\phi|_X:X\cong Y$ is an isomorphism. In particular, $(X,\O_X)$ is an affine algebraic equivariety.
\end{proof}

\begin{lem}\label{DBRES}
If $(V,\O_V)$ is a concrete equivariety and $S\subs T\subs V$ are subsets, then we have $(\O_V|_T)|_S=\O_V|_S$.
\end{lem}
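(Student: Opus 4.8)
The plan is to simply unwind the two iterated restriction sheaves and prove the equality section by section; the equivariety hypothesis is used only to the extent that it makes $\O_V$ a sheaf of $k$-valued functions, so that the explicit ``locally a restriction'' description of $\O_V|_T$ and $\O_V|_S$ is available. Throughout I write $\O_T:=\O_V|_T$, consistently with the paper's notation $\O_V|_S=\O_S$. First I would dispose of the topological preliminaries: since $S\subs T\subs V$, the topology induced on $S$ by $T$ agrees with the topology induced on $S$ by $V$ (transitivity of the subspace topology), so ``open in $S$'' is unambiguous; and for every open $U\subs S$ and every subset $O\subs V$ one has $O\cap U=(O\cap T)\cap U$, simply because $U\subs T$. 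This harmless identity is exactly what lets ambient neighbourhoods from the three spaces be matched up. Fix an open $U\subs S$; it then suffices to show $(\O_V|_T)|_S(U)=\O_V|_S(U)$ as subsets of the $k$-algebra of functions $U\to k$, since both restriction sheaves are subsheaves of the sheaf of $k$-valued functions and are therefore determined by their section sets.

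For the inclusion $\O_V|_S(U)\subs(\O_V|_T)|_S(U)$, I would take $f\in\O_V|_S(U)$ and $P\in U$, pick an open $O\ni P$ in $V$ and $g\in\O_V(O)$ with $f|_{O\cap U}=g|_{O\cap U}$, and then set $O':=O\cap T$, an open neighbourhood of $P$ in $T$. One checks that $g|_{O'}\in\O_V|_T(O')=\O_T(O')$ — it is globally, hence locally, a restriction of the $\O_V$-section $g$ (take the ambient open to be $O$ itself at every point of $O'$) — and that $f|_{O'\cap U}=(g|_{O'})|_{O'\cap U}$, because $O'\cap U=O\cap U$. This exhibits $f$ as locally a restriction of $\O_T$-sections along $S\into T$, i.e. $f\in(\O_V|_T)|_S(U)$.

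For the reverse inclusion, take $f\in(\O_V|_T)|_S(U)$ and $P\in U$: there are an open $O'\ni P$ in $T$ and $h\in\O_T(O')$ with $f|_{O'\cap U}=h|_{O'\cap U}$, and, applying the definition of $\O_T=\O_V|_T$ at the point $P$, there are an open $O\ni P$ in $V$ and $g\in\O_V(O)$ with $h|_{O\cap O'}=g|_{O\cap O'}$. Write $O'=\wt O\cap T$ for some open $\wt O\subs V$ (openness of $O'$ in $T$), put $O'':=O\cap\wt O$ (an open neighbourhood of $P$ in $V$) and $g'':=g|_{O''}\in\O_V(O'')$. Then $O''\cap U=O\cap O'\cap U$, again because $U\subs T$, and restricting the two known agreements to this set gives $g''|_{O''\cap U}=g|_{O\cap O'\cap U}=h|_{O\cap O'\cap U}=f|_{O\cap O'\cap U}=f|_{O''\cap U}$. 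Hence $f\in\O_V|_S(U)$, and combining the two inclusions over all open $U\subs S$ yields $(\O_V|_T)|_S=\O_V|_S$.

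The only real point of care — the ``main obstacle'', though it is a light one — is the bookkeeping of intersections across the three ambient spaces $S\subs T\subs V$: one must systematically use $U\subs T$ to collapse $O\cap T\cap U$ to $O\cap U$, and one must remember that a $T$-open set $O'$ is only of the form $\wt O\cap T$ for some $V$-open $\wt O$, so in the second inclusion the $V$-ambient neighbourhood produced by the inner use of the definition has to be intersected with $\wt O$ before the section can be pulled back to the level of $V$.
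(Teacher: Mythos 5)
Your proof is correct and follows essentially the same route as the paper's: both directions unwind the ``locally a restriction'' definition, the first by intersecting the ambient $V$-open with $T$, the second by intersecting the inner $V$-open from the definition of $\O_V|_T$ with a $V$-open witnessing the openness of $O'$ in $T$, and both rely on the identity $O\cap T\cap U=O\cap U$ for $U\subs T$. No discrepancies to report.
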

\begin{proof}
Let $U\subs S$ be open, and let $s\in \O_V|_S(U)$ : by definition, for each $P\in U$ there exists $O_P\subs V$ open and $t_P\in \O_V(O_P)$ such that $t_P|_{O_P\cap U}=s|_{O_P\cap U}$. Let $O'_P=O_P\cap T$ for each $P$ : $O'_P$ is open in $T$ and by definition of $\O_V|_T$, we have $t'_P:=t_P|_{O'_P}\in \O_V|_T(O'_P)$ : as $t'_P|_{O'_P\cap U}=s|_{O'_P\cap U}$ for each $P$, this show that $s\in (\O_V|_T)|_S(U)$. Conversely, if $s\in (\O_V|_T)|_S(U)$, for each $P\in U$ there exists an open $O'_P=O_P\cap T$ of $T$ with $O_P$ open in $V$, and $t'_P\in \O_V|_T(O'_P)$ such that $t'_P|_{O'_P\cap U}=s|_{O'_P\cap U}$. By definition of $\O_V|_T$, there exists an open $U_P\subs V$ and $t_P\in \O_V(U_P)$ such that $P\in U_P$ and $t_P|_{U_P\cap O'_P}=t'_P|_{U_P\cap O'_P}$ : we have $t_P|_{(U_P\cap O_P)\cap U}=t_P|_{U_P\cap O'_P\cap U}=t'_P|_{U_P\cap (O'_P\cap U)}=s|_{U_P\cap (O'_P\cap U)}=s|_{(U_P\cap O_P)\cap U}$, so that $s\in \O_V|_S(U)$, $(\O_V|_T)|_S(U)=\O_V|_S(U)$, and therefore $(\O_V|_T)|_S=\O_V|_S$, and the proof is complete.
\end{proof}

\begin{prop}\label{LOCLOS}
Every locally closed subset $S$ of a (concrete) equivariety $V$, equipped with the restriction of the structure sheaf, is an equivariety itself, and the inclusion $i:S\into V$ is a regular map (and therefore an immersion).
\end{prop}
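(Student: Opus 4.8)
The plan is to reduce the statement to the already-proved affine case, Lemma \ref{CLSVAR}, by exploiting that being an equivariety is a local property and that the affine open subsets of $V$ form a basis (Lemma \ref{OPENBASIS}). Throughout, $\O_S:=\O_V|_S$ is automatically a sheaf of $k$-valued functions (this was checked when the restricted sheaf was defined), so the only real task is to produce, around each point of $S$, an affine algebraic equivariety structure agreeing with $\O_S$.

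First I would fix $P\in S$. Since $S$ is locally closed, there is an open $O\subs V$ with $P\in O$ and $O\cap S$ closed in $O$. By Lemma \ref{OPENBASIS} I may choose an affine open subset $U$ of $V$ with $P\in U\subs O$; then $U\cap S=U\cap(O\cap S)$ is closed in $U$, because $U$ is a subspace of $O$ and $O\cap S$ is closed in $O$. Now $(U,\O_V|_U)=(U,\O_U)$ is a concrete affine algebraic equivariety and $U\cap S$ is a closed subset of it, so Lemma \ref{CLSVAR} applies and $(U\cap S,\O_U|_{U\cap S})$ is an affine equivariety.

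Next I would match this structure with the one induced from $S$. The set $U\cap S$ is open in $S$ (as $U$ is open in $V$), and two applications of Lemma \ref{DBRES}, with $U\cap S\subs U\subs V$ and with $U\cap S\subs S\subs V$ respectively, give $(\O_V|_S)|_{U\cap S}=\O_V|_{U\cap S}=(\O_V|_U)|_{U\cap S}=\O_U|_{U\cap S}$. Hence the open subset $U\cap S$ of $S$, equipped with the restriction of $\O_S$, is an affine algebraic equivariety containing $P$; in particular its stalks are local $k$-algebras with residue field $k$, and since these stalks coincide with those of $\O_S$ at the same points, $(S,\O_S)$ is a locally ringed space in $k$-algebras. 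As $P$ was arbitrary, $S$ is covered by such affine open subsets, so $(S,\O_S)$ is a concrete equivariety over $k$, i.e. $S$ is a subvariety of $V$.

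Finally, for the inclusion $i:S\into V$: it is continuous for the subspace topology, and for every open $U'\subs V$ and every $s\in\O_V(U')$ the composite $s\circ i=s|_{U'\cap S}$ lies in $\O_V|_S(U'\cap S)=\O_S(i^{-1}U')$ — take $O=U'$ and $g=s$ in the definition of the restricted sheaf. Thus $i$ is regular. Since $i(S)=S$ is a subvariety and $i$ realises $S$ as the identity on $(i(S),\O_V|_{i(S)})=(S,\O_S)$, it is an isomorphism onto its image, hence an immersion. The step requiring the most care is the second paragraph: one must choose the affine open $U$ \emph{inside} $O$ so that $U\cap S$ stays closed in $U$, and then identify $\O_U|_{U\cap S}$ with the restriction of $\O_S$ through Lemma \ref{DBRES} rather than re-deriving it by hand; once this is in place, Lemma \ref{CLSVAR} carries the substantive content.
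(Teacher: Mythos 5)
Your proof is correct and follows essentially the same route as the paper: both reduce to Lemma \ref{CLSVAR} via Lemma \ref{OPENBASIS} and the transitivity of restriction in Lemma \ref{DBRES}, and verify regularity of the inclusion by taking $O=U'$, $g=s$ in the definition of the restricted sheaf. The only (harmless) organisational difference is that you localise at once using the pointwise characterisation of locally closed sets, whereas the paper first treats open and closed subsets separately and then writes $S=O\cap X$.
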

\begin{proof}
Let $(V,\O_V)$ be a concrete equivariety. First, if $O\subs V$ is an open subvariety, then by Lemma \ref{OPENBASIS} $O$ is the union of affine open subsets of $V$, so $(O,\O_O)$ itself is an equivariety. Secondly, if $X\subs V$ is a closed subvariety, write $V=\bigcup_i V_i$ with each $V_i$ an affine open subvariety of $V$ : we have $X=\bigcup_i X_i$, with $X_i=X\cap V_i$ for each $i$. Now $X_i$ is closed in $V_i$ because $X$ is closed in $V$, so by Lemma \ref{CLSVAR}, $(X_i,\O_{V_i}|_{X_i})$ is an affine equivariety. Furthermore, as now $V_i$ is open in $V$ for each $i$, $X_i$ is open in $X$ for each $i$, and as $(X_i,\O_{V_i}|_{X_i})=(X_i,(\O_V|_X)|_{X_i})$ for each $i$ by Lemma \ref{DBRES}, $X$ is a union of affine algebraic equivarieties, each open in $X$, i.e. $X$ is an equivariety.
In the general case, if $S\subs V$ is locally closed, then it is the intersection $S=O\cap X$ of an open subset $O$ and a closed subset $X$ of $V$. By Lemma \ref{DBRES} again, we have $(S,\O_V|_S)=(S,((\O_V)|_O)|_S)$ and by what precedes, it is an equivariety, because $S$ is closed in $O$.
Let $i:S\into V$ be the inclusion : it is continuous for the induced topology on $S$, so let $U\subs V$ be an open subset and $f\in \O_V(U)$. For each $P\in U\cap S$, we have $P\in U$ and $(f\circ i|_ {U\cap S})|_{(U\cap S)\cap U}=f\circ i|_{U\cap S}=f|_{U\cap S}=f|_{(U\cap S)\cap U}$, which shows that $f\circ i|_{U\cap S}\in \O_S(U\cap S)$, and therefore $i$ is regular.
\end{proof}

\section{Separated and algebraic equivarieties}\label{SEPALG}
\subsection*{Separatedness and the diagonal} We introduce the notion of separatedness for equivarieties in general. Following the approach of \cite{MAG}, Chapter 4, as before, we are going to usefully characterise the separatedness property by the closed character of the diagonal as in the algebraically closed case, thanks to the results of the preceding sections.
\begin{defi}
Say that an equivariety $V$ over $k$ is :\\
i) \emph{separated}, if for every pair of morphisms $\phi_1,\phi_2:U\to V$ with $U$ an affine algebraic equivariety over $k$, the set $\{P\in U : \phi_1(P)=\phi_2(P)\}$ is closed in $U$\\
ii) \emph{algebraic}, if it is \emph{separated} and \emph{compact}.
\end{defi}

\noindent Recall from Section \ref{OPAFF} that if $V$ is a concrete affine equivariety, we have the open and regular projections (for the product structure on $V\xx V$) $p,q:V\xx V\to V$ on each component and the \emph{diagonal} $\Delta_V=\{(P,P)\in V\xx V\}=\{(P,Q)\in V\xx V : P=Q\}=\{(P,Q)\in V\xx V : \forall f\in J(V),\ f(P)=f(Q)\}$, a closed subset of $V\xx V$.
In general, if $V$ is a concrete equivariety over $k$, say $V=\bigcup_i V_i$ with each $V_i$ open and affine, for each $i$ we have the diagonal $\Delta_i=\Delta_{V_i}$ of $V_i$ defined as above, a closed subset of $V_i\xx V_i$ for the induced topology (the Zariski topology on $V_i\xx V_i$ is \emph{by definition} the induced topology). For the next proposition, recall that if $X$ is any topological space and $S$ is any subset of $X$, for any open cover $X=\bigcup_i U_i$ the subset $S$ is closed if and only if $S\cap U_i$ is closed in $U_i$ for each $i$.

\begin{prop}\label{MAG.4.24}
An equivariety $(V,\O_V)$ is separated if and only if the diagonal $\Delta=\{(P,P)\in V\xx V : P\in V\}$ of $V$ is closed in $V\xx V$ for the topology associated to the product structure on $V\xx V$.
\end{prop}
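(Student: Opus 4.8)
The statement is the standard characterisation of separatedness by closedness of the diagonal, in the style of \cite{MAG}, Proposition 4.24. The plan is to prove the two implications separately, reducing everything to the affine case via an affine open cover $V=\bigcup_i V_i$ and the local-closedness criterion recalled just before the statement.

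For the direction ``$\Delta$ closed $\Rightarrow$ $V$ separated'': let $\phi_1,\phi_2:U\to V$ be two regular maps with $U$ an affine algebraic equivariety over $k$. First I would form the product map $(\phi_1,\phi_2):U\to V\xx V$, which is regular by Proposition \ref{MAG.4.21} (the universal property of the product, since $p\circ(\phi_1,\phi_2)=\phi_1$ and $q\circ(\phi_1,\phi_2)=\phi_2$ are regular). Then $\{P\in U:\phi_1(P)=\phi_2(P)\}=(\phi_1,\phi_2)^{-1}(\Delta)$, which is closed in $U$ by continuity of $(\phi_1,\phi_2)$ and closedness of $\Delta$ in $V\xx V$. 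This direction is essentially immediate.

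For the converse ``$V$ separated $\Rightarrow$ $\Delta$ closed'': write $V=\bigcup_i V_i$ with each $V_i$ open and affine, so that $V\xx V=\bigcup_{i,j}(V_i\xx V_j)$ is an open cover (each $V_i\xx V_j$ being open in $V\xx V$ by the glueing construction of the product sheaf). By the local-closedness criterion it suffices to show $\Delta\cap(V_i\xx V_j)$ is closed in $V_i\xx V_j$ for every pair $(i,j)$. When $i=j$, $\Delta\cap(V_i\xx V_i)=\Delta_{V_i}$ is exactly the diagonal of the concrete affine equivariety $V_i$, which is closed by the explicit computation $\Delta_{V_i}=\{(P,Q):\forall f\in J(V_i),\,f(P)=f(Q)\}$ recalled in Section \ref{OPAFF}. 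When $i\neq j$, the key point is that $\Delta\cap(V_i\xx V_j)=\{(P,Q)\in V_i\xx V_j:P=Q\in V_i\cap V_j\}$, the diagonal of the open subset $V_i\cap V_j$, which I would need to show is closed in $V_i\xx V_j$; this is where separatedness is used. Explicitly, I would apply separatedness to the two restricted inclusion maps $\phi_1,\phi_2:V_i\cap V_j\to V$; but $V_i\cap V_j$ need not be affine, so I first cover it by affine opens $V_i\cap V_j=\bigcup_k O_k$, and for each $k$ apply the separatedness hypothesis to the pair of inclusions $O_k\into V_i$ and $O_k\into V_j$ composed into $V$ — equivalently, consider $(\iota_1,\iota_2):O_k\to V_i\xx V_j$, regular by the product property, with $(\iota_1,\iota_2)^{-1}(\Delta\cap(V_i\xx V_j))=\{P\in O_k:P\in V_i\cap V_j\}=O_k$; one then checks that $\Delta\cap(V_i\xx V_j)$ meets the affine chart $O_k\xx O_l$ (for $O_k,O_l$ affine opens of $V_i,V_j$) in the graph of the identity $O_k\cap O_l\to O_k\cap O_l$, which is closed because the relevant equality locus is closed by separatedness applied to the affine domain $O_k\cap O_l$. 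Since the $O_k\xx O_l$ form an open cover of $V_i\xx V_j$, closedness of $\Delta\cap(V_i\xx V_j)$ follows again by the local-closedness criterion.

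The main obstacle I anticipate is purely bookkeeping rather than conceptual: the definition of separatedness only tests pairs of maps out of an \emph{affine} equivariety, so one cannot directly feed in $V_i\cap V_j$, and the argument must descend one further level to an affine open cover of each $V_i\cap V_j$ and then patch back up using the local-closedness criterion twice (once for the cover $\{V_i\xx V_j\}$ of $V\xx V$, once for an affine cover of each $V_i\xx V_j$). I would also need the small observation that the graph of a regular map into a separated target is closed, which itself follows from the definition applied to the projection-and-map pair; checking that all the relevant product maps are regular is handled by Proposition \ref{MAG.4.21} and Lemma \ref{REGPROD}.
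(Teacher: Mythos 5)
Your first direction (closed diagonal implies separated) is correct and is exactly the paper's argument: form $(\phi_1,\phi_2):U\to V\xx V$, regular by Proposition \ref{MAG.4.21}, and pull back $\Delta$. The converse, however, has a genuine gap, and it lies precisely at the point you flag as ``where separatedness is used''. You never identify a correct pair of test maps to feed into the definition. Your first attempt, the two inclusions $O_k\into V_i\into V$ and $O_k\into V_j\into V$, are literally the same map $O_k\into V$, so their equality locus is all of $O_k$ and the hypothesis yields nothing (your own computation $(\iota_1,\iota_2)^{-1}(\Delta\cap(V_i\xx V_j))=O_k$ shows this). Your second attempt invokes ``separatedness applied to the affine domain $O_k\cap O_l$'', which fails twice over: $O_k\cap O_l$ is an intersection of two affine opens and is \emph{not} known to be affine (that such intersections are affine is itself a consequence of separatedness, cf.\ Proposition \ref{MAG.4.27}(ii), so assuming it here is close to circular), and in any case the definition applied to maps out of $O_k\cap O_l$ produces a closed subset of $O_k\cap O_l$, not of $O_k\xx O_l$, which is what you need.

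The missing idea is to take as test maps the two \emph{projections} $p,q:V_i\xx V_j\to V$ (restrictions of the projections of $V^2$, regular by Lemma \ref{OPENPROJ} and composition with the open immersion $V_i\xx V_j\into V^2$), whose domain $V_i\xx V_j$ is already an \emph{affine} algebraic equivariety by Proposition \ref{AFFPROD} --- this is the fact your bookkeeping overlooks, and it removes the need to descend to the $O_k$'s at all. Their equality locus is $\{(P,Q)\in V_i\xx V_j : P=Q\}=\Delta\cap(V_i\xx V_j)$, closed in $V_i\xx V_j$ by the separatedness hypothesis, and the local-closedness criterion over the open cover $V^2=\bigcup_{i,j}V_i\xx V_j$ then finishes the proof, exactly as the paper does. (The case distinction $i=j$ versus $i\neq j$ is also unnecessary once this is in place.)
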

\begin{proof}
As before, we may assume by Lemma \ref{CONCEQ} that all equivarieties are concrete. As in \cite{MAG}, Proposition 4.24, assume $\Delta$ is closed, and let $\phi_1,\phi_2:U\to V$ be two regular morphisms defined on a concrete affine algebraic equivariety $U$. The product map $(\phi_1,\phi_2):U\to V\xx V$ is regular by Proposition \ref{MAG.4.21}, so in particular it is continuous and thus $\{P\in U :\phi_1(P)=\phi_2(P)\}=(\phi_1,\phi_2)^{-1}\Delta$ is closed, so $V$ is separated. Conversely, assume that $V$ is separated, and let $\phi_1,\phi_2:V\xx V\to V$ be the two projection maps, and write $V=\bigcup_i V_i$ as a union of open affine algebraic subvarieties. For each pair $(i,j)$, the restrictions $\phi_{1,ij}$ and $\phi_{2,ij}$ of $\phi_1$ and $\phi_2$ to the open subset $V_i\xx V_j$ of $V^2$ are regular by composition of $\phi_1$ and $\phi_2$ with the open immersion $V_i\xx V_j\into V^2$, and as $V$ is separated and $V_i\xx V_j$ is affine, the set $\Delta_{ij}:= \Delta\cap (V_i\xx V_j)=\{(P,Q)\in V_i\xx V_j : P=\phi_1(P,Q)=\phi_2(P,Q)=Q\}$ is closed in $V_i\xx V_j$. Now as $\Delta=\bigcup_{(i,j)} \Delta_{ij}$ and $V^2=\bigcup_{(i,j)} V_i\xx V_j$ is an open cover, it follows that $\Delta$ is closed in $V^2$.
\end{proof}

\subsection*{A local characterisation of separatedness}
\begin{lem}\label{MAG.3.22}
Let $\phi:V\to W$ be a regular morphism of concrete affine equivarieties over $k$ and $f=\phi^*:J(W)\to J(V)$, $g\mapsto g\circ \phi$ the induced $k$-morphism. The morphism $\phi$ is a closed immersion (i.e is injective with a closed image) if and only if $f$ is surjective.
\end{lem}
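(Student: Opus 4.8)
The plan is to prove both directions by transporting everything through the $Spm$--$J$ duality from \cite{EQAG} and the standard correspondence between closed immersions of affine equivarieties and surjections of $*$-algebras (Example \ref{AFFCLO}). Recall that for a concrete affine equivariety $V$ we identify $V\cong Spm\,J(V)$, with points corresponding to the maximal ideals (equivalently, to the evaluation morphisms $J(V)\to k$), and that a regular morphism $\phi:V\to W$ corresponds to the ring map $f=\phi^*:J(W)\to J(V)$ via $\phi(P)=f^{-1}(\m_P)$, where $\m_P=\{s\in J(V):s(P)=0\}$.

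For the ``if'' direction, suppose $f:J(W)\to J(V)$ is surjective. Let $I=\ker f\subs J(W)$, an ideal of the affine $*$-algebra $J(W)$, so that $f$ factors as $J(W)\onto J(W)/I\xrightarrow{\ \sim\ }J(V)$. By Example \ref{AFFCLO}, the canonical projection $\pi:J(W)\onto J(W)/I$ induces a closed immersion $K(\pi):Spm(J(W)/I)\into Spm\,J(W)=W$, and the isomorphism $J(W)/I\cong J(V)$ induces an isomorphism $V\cong Spm(J(W)/I)$ of concrete affine equivarieties. Composing, we see that $\phi$ is (up to the identifications) exactly $K(\pi)$ followed by an isomorphism, hence is injective with closed image, i.e. a closed immersion. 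Concretely, the image is $\{\m\in W:\m\sups I\}=\ms Z_W(I)$, which is closed, and injectivity follows because distinct maximal ideals of $J(W)/I$ have distinct preimages in $J(W)$.

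For the ``only if'' direction, suppose $\phi:V\to W$ is a closed immersion; so $\phi$ is injective, $\phi(V)=:X$ is closed in $W$, and $\phi:V\cong(X,\O_W|_X)$ is an isomorphism. By Lemma \ref{CLSVAR}, $(X,\O_W|_X)$ is a concrete affine equivariety and the inclusion $X\into W$ is induced (again by Example \ref{AFFCLO}, applied with $I=\ms I_W(X)$ the ideal of functions vanishing on $X$, recalling from the discussion preceding Lemma \ref{UNIPROD} that closed subsets of $W$ are zero sets of ideals of $J(W)$) by the surjection $J(W)\onto J(W)/\ms I_W(X)\cong J(X)=\Gamma(X,\O_W|_X)$. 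Since $\phi$ factors as this closed inclusion composed with the isomorphism $\phi:V\cong X$, the induced map $f=\phi^*:J(W)\to J(V)$ factors as the surjection $J(W)\onto J(X)$ followed by the isomorphism $J(X)\cong J(V)$, hence $f$ is surjective.

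The main obstacle is the bookkeeping in the ``only if'' direction: one must know that the restricted-sheaf structure $\O_W|_X$ on the closed set $X$ agrees with the ``intrinsic'' affine equivariety structure $Spm(J(W)/\ms I_W(X))$ with its natural sheaf, so that the abstract duality of \cite{EQAG} applies and identifies $\phi^*$ with the projection $J(W)\onto J(W)/\ms I_W(X)$; this is precisely what Lemma \ref{CLSVAR} (together with Example \ref{AFFCLO} and the $Spm$--$J$ duality) provides, so once those are invoked the argument is routine. A minor point to check is that $\ms I_W(X)$ is a genuine ideal of $J(W)$ (not merely of $k[\ov W]$) and that $J(W)/\ms I_W(X)$ is again an affine $*$-algebra so that the duality is available — this follows from the structure results of \cite{EQAG} on affine $*$-algebras and their quotients.
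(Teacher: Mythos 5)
Your proof is correct and follows essentially the same route as the paper: the ``if'' direction factors $f$ through $J(W)/\ker f$ and invokes Example \ref{AFFCLO} plus the $Spm$--$J$ duality exactly as the paper does, and the ``only if'' direction factors $f$ as the restriction $J(W)\onto J(\phi(V))$ followed by the isomorphism $J(\phi(V))\cong J(V)$, which is the paper's decomposition as well. The only cosmetic difference is that the paper obtains the surjectivity of $J(W)\to J(\phi(V))$ directly from Lemma \ref{CLSVAR} via a chart for $W$, whereas you route it through the identification $J(\phi(V))\cong J(W)/\ms I_W(\phi(V))$ --- the point you rightly flag as needing justification, and which the chart argument supplies.
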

\begin{proof}
Suppose that $\phi$ is a closed immersion : $\phi(V)$ is closed and $\phi$ is an isomorphism onto $\phi(V)$ considered as a closed subvariety of $W$; the corestriction $\psi$ of $\phi$ to $\phi(V)$ induces an isomorphism $\psi^\#_{\phi(V)}:\O_{\phi(V)}(\phi(V))\cong \O_V(V)$, $g\mapsto g\circ \phi$. Now consider the inclusion $j:i(V)\into W$ : it induces the restriction $j_W^\#:g\in J(W)\mapsto g|_{\phi(V)}\in J(\phi(V))=\O_{\phi(V)}(\phi(V))$, and by Lemma \ref{CLSVAR}, using a chart for $W$ we see that $j_W^\#$ is surjective. As $f=\psi^\#_{\phi(V)}\circ j_W^\#$, $f$ itself is surjective. 
Conversely, suppose $f$ is surjective and factor it out through the quotient isomorphism $f_I:J(W)/I\cong J(V)$, $g+I\mapsto g\circ \phi$ for $I=Ker(f)=\{g\in J(W) : g\circ \phi\equiv 0\}$. Write $A=J(W)$, $B=J(V)$ : by Example \ref{AFFCLO}, we have a closed immersion $K(\pi):K(A/I)\into K(A)$, and by what precedes an isomorphism $K(f_I):K(B)\cong K(A/I)$, so that $K(\pi)\circ K(f_I)=K(f)$. It follows that $K(f)$ is a closed immersion, as well as $\phi$, because of the following commutative diagram :
$$\begin{CD}
K(B)=KJ(V) @>K(f)=KJ(\phi)>> KJ(W)=K(A)\\
@A\cong_V AA @AA\cong_W A\\
V @>>\phi> W
\end{CD}$$
where $\cong_V$ and $\cong_W$ come from the natural isomorphism of \cite{EQAG}, Proposition 4.10.
\end{proof}

\noindent We will need the following generalisation of Lemma \ref{IPROAFF}.
\begin{lem}\label{ZARIND}
If $V$ and $W$ are concrete equivarieties over $k$ and $O\subs V$, $U\subs W$ are affine open subsets, then the Zariski topology on $O\xx U$ is the topology induced by the Zariski topology on $V\xx W$.
\end{lem}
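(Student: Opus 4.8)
The plan is to reduce the statement to the affine product situation handled in Lemma \ref{IPROAFF}. Write $V=\bigcup_i V_i$ and $W=\bigcup_j W_j$ as unions of affine open subvarieties; by the very construction of $\O_{V\xx W}$ (and of the Zariski topology on $V\xx W$), the topology on $V\xx W$ is the one glued from the Zariski topologies on the $V_i\xx W_j$, so that each $V_i\xx W_j$ is open in $V\xx W$ and carries the induced topology. Similarly, since $O$ is an affine open subset of $V$ and $U$ is an affine open subset of $W$, by Lemma \ref{OPENBASIS} we may cover $O$ by affine open subsets of the $V_i$'s (each $O\cap V_i$ being open in the affine equivariety $V_i$, hence a finite union of affine opens of $V_i$), and likewise cover $U$ by affine opens of the $W_j$'s. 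Thus $O\xx U$ is covered by sets of the form $O'\xx U'$ with $O'\subs V_i$, $U'\subs W_j$ affine opens, each of which is open in $V_i\xx W_j$ hence in $V\xx W$.

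Next I would invoke Lemma \ref{IPROAFF}(i): for the affine equivarieties $V_i$ and $W_j$, and the affine open subsets $O'\subs V_i$, $U'\subs W_j$, the Zariski topology on $O'\xx U'$ coincides with the topology induced by the Zariski topology on $V_i\xx W_j$, which in turn is induced from $V\xx W$. So on each piece $O'\xx U'$ of the cover of $O\xx U$, the Zariski topology agrees with the topology induced from $V\xx W$. Since the Zariski topology on $O\xx U$ restricted to $O'\xx U'$ is the Zariski topology on $O'\xx U'$ (again by Lemma \ref{IPROAFF}(i), applied with $V=O$, $W=U$ and the affine opens $O'\subs O$, $U'\subs U$, using that $O'$, $U'$ are affine opens of $O$, $U$ as well), and a set is open in $O\xx U$ if and only if its trace on each $O'\xx U'$ is open there, the two topologies on $O\xx U$ have the same open sets: a subset of $O\xx U$ is Zariski-open iff its intersection with every $O'\xx U'$ is Zariski-open there iff (by the affine case) that intersection is open for the induced topology from $V\xx W$ iff the subset is open for the induced topology from $V\xx W$.

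The main obstacle is bookkeeping: one must be careful that "Zariski topology'' is being used for three nested products ($O'\xx U'$, $V_i\xx W_j$, $V\xx W$, and also $O\xx U$) and that the gluing construction really does make all the relevant inclusions open embeddings of topological spaces, so that the "trace on each piece'' criterion for openness is legitimate at every level. Once the affine case (Lemma \ref{IPROAFF}(i)) and the gluing lemma (Lemma \ref{MAG.4.13}, which underlies the definition of the topology on $V\xx W$) are in hand, no new computation is needed; the content is purely the transitivity of "induced topology'' along the chain $O'\xx U'\subs V_i\xx W_j\subs V\xx W$ combined with the local (on an open cover) characterisation of openness.
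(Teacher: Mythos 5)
Your proposal is correct and follows essentially the same route as the paper's proof: cover $O$ and $U$ by affine open subsets of the affine pieces $V_i$, $W_j$, then apply Lemma \ref{IPROAFF}(i) twice along the chain $O'\xx U'\subs V_i\xx W_j\subs V\xx W$ and once with $V=O$, $W=U$, and conclude by the open-cover criterion for openness. No discrepancy worth noting.
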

\begin{proof}
Write $V=\bigcup_i V_i$ and $W=\bigcup_j W_j$ as unions of open affine subsets, $O_i=O\cap V_i$ and $U_j=U\cap W_j$ for all $i,j$. Each $O_i$ and each $U_j$ is an open subset of an affine, so for all $i,j$ we may write $O_i=\bigcup_{a\in A_i} O_{i,a}$ and $U_j=\bigcup_{b\in B_j} U_{j,b}$ as finite unions of affine open subsets of $V_i$ and $W_j$ respectively. If $X\subs O\xx U$ is Zariski open, then for all $i,j,a,b$, $X\cap (O_{i,a}\xx U_{j,b})$ is Zariski open in $O_{i,a}\xx U_{j,b}$ by Lemma \ref{IPROAFF}, and therefore open in $V_i\xx W_j$ by the same Lemma, hence $X$ is open in $O\xx U$ for the topology induced by $V\xx W$. Conversely, if $X=Y\cap (O\xx U)$ is open for the topology induced by $V\xx W$, where $Y\subs V\xx W$ is open, then for all $i,j,a,b$, as $O_{i,a}\xx U_{j,b}$ is open in $V\xx W$, the set $X\cap (O_{i,a}\xx U_{j,b})=Y\cap (O_{i,a}\xx U_{j,b})$ is Zariski open in $V_i\xx W_j$; by Lemma \ref{IPROAFF}, this set is Zariski open in $O_{i,a}\xx U_{j,b}$, and by the same lemma again it is Zariski open in $O\xx U$. It follows that the two topologies on $O\xx U$ coincide.
\end{proof}

\begin{lem}\label{MAG.4.25}
If $V$ is a concrete equivariety over $k$, then the diagonal $\Delta_V$ is locally closed in $V\xx V$.
\end{lem}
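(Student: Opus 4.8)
The plan is to cover $V\xx V$ by the open affine pieces $V_i\xx V_j$ coming from a fixed open affine cover $V=\bigcup_i V_i$, and to show that for each pair $(i,j)$ the trace $\Delta_V\cap(V_i\xx V_j)$ is closed in an open neighbourhood of each of its points. Since $V_i$ and $V_j$ are affine, the open subsets $V_i\cap V_j$ can be covered by basic affine opens, so I will reduce to the following local situation: if $P\in V_i\cap V_j$, pick an affine open $O\subs V_i\cap V_j$ with $P\in O$; then $O\xx O$ is an open subset of $V_i\xx V_j$ (by Lemma \ref{ZARIND} its Zariski topology is the induced one), and I claim $\Delta_V\cap(O\xx O)=\Delta_O$ is closed in $O\xx O$. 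This last point is exactly the statement, proved in Section \ref{OPAFF}, that the diagonal of a concrete affine equivariety is closed in its square: $\Delta_O=\{(P,Q)\in O\xx O:\forall f\in J(O),\ f(P)=f(Q)\}$.

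The key steps, in order, are: (1) fix the cover $V=\bigcup_i V_i$ by open affine subvarieties, so that $V\xx V=\bigcup_{i,j}V_i\xx V_j$ is an open cover for the product topology; (2) observe that a point of $\Delta_V$ lies in some $V_i\xx V_i$ (it is of the form $(P,P)$ with $P\in V_i$ for some $i$), but more usefully that an arbitrary point $(P,Q)\in\Delta_V\cap(V_i\xx V_j)$ has $P=Q\in V_i\cap V_j$; (3) choose an affine open $O$ of $V$ with $P\in O\subs V_i\cap V_j$ (possible by Lemma \ref{OPENBASIS}), note $O\xx O$ is open in $V_i\xx V_j$ hence in $V\xx V$, and that its Zariski topology agrees with the induced one by Lemma \ref{ZARIND}; (4) identify $\Delta_V\cap(O\xx O)$ with the affine diagonal $\Delta_O$, which is closed in $O\xx O$ by the computation in Section \ref{OPAFF}; (5) conclude that every point of $\Delta_V$ has an open neighbourhood in $V\xx V$ in which $\Delta_V$ is relatively closed, i.e. $\Delta_V$ is locally closed.

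The main obstacle is step (3)--(4): one has to make sure the set-theoretic identity $\Delta_V\cap(O\xx O)=\Delta_O$ is genuinely an identity of subspaces, i.e. that the subspace topology $O\xx O$ inherits from $V\xx V$ coincides with the intrinsic Zariski topology of the affine product $O\xx O$, so that closedness computed affinely transfers to closedness in $V\xx V$. This is precisely what Lemma \ref{ZARIND} provides (applied with $O\subs V$ and $O\subs V$), so the argument goes through; the rest is routine bookkeeping with open covers and the characterisation of closedness via an open cover recalled before Proposition \ref{MAG.4.24}.

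\begin{proof}
We may assume $V$ is concrete. Write $V=\bigcup_i V_i$ as a union of open affine algebraic subvarieties, so that $V\xx V=\bigcup_{i,j}(V_i\xx V_j)$ is an open cover of $V\xx V$ for the product structure. Let $(P,P)\in\Delta_V$; then $P\in V_i$ for some $i$, and in particular $P\in V_i\cap V_i$. By Lemma \ref{OPENBASIS} there is an affine open subset $O$ of $V$ with $P\in O\subs V_i$. Then $O\xx O$ is an open subset of $V_i\xx V_i$, and hence of $V\xx V$, and by Lemma \ref{ZARIND} the Zariski topology of the affine product $O\xx O$ coincides with the topology induced on $O\xx O$ by $V\xx W$ (here $W=V$). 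Now
$$\Delta_V\cap(O\xx O)=\{(P',Q')\in O\xx O : P'=Q'\}=\{(P',Q')\in O\xx O : \forall f\in J(O),\ f(P')=f(Q')\}=\Delta_O,$$
the last equality being the description of the diagonal of the concrete affine equivariety $O$ recalled in Section \ref{OPAFF}; by the same discussion $\Delta_O$ is closed in $O\xx O$ for the Zariski topology, hence closed in $O\xx O$ for the topology induced by $V\xx V$. Thus $(P,P)$ has an open neighbourhood $O\xx O$ in $V\xx V$ such that $\Delta_V\cap(O\xx O)$ is relatively closed in $O\xx O$. Since this holds for every point of $\Delta_V$, the diagonal $\Delta_V$ is locally closed in $V\xx V$.
\end{proof}
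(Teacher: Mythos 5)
Your proof is correct and follows essentially the same route as the paper: pick an affine open neighbourhood of each point of the diagonal, observe that its square is open in $V\xx V$, identify the trace of $\Delta_V$ there with the affine diagonal (closed by the Section \ref{OPAFF} computation), and transfer closedness through Lemma \ref{ZARIND}. The detour through the fixed cover $V=\bigcup_i V_i$ before choosing $O$ is harmless but unnecessary; the paper simply takes $U$ to be any affine open neighbourhood of $P$ directly.
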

\begin{proof}
The proof is essentially the same as in \cite{MAG}, Corollary 4.25. If $P\in V$, let $U$ be an affine open neighbourhood of $P$ in $V$ : $U\xx U$ is an open neighbourhood of $(P,P)$ in $V\xx V$ because the Zariski topology on $V\xx V$ is finer than the product topology. Let $\Delta_U=\Delta_V\cap (U\xx U)$ be the diagonal of $U$; as $U$ is affine, $\Delta_U$ is Zariski closed in $U\xx U$; by Lemma \ref{ZARIND}, it is closed in $U^2$ for the induced topology, so every $(P,P)\in \Delta_V$ has an open neighbourhood $U^2$ in $V^2$ such that $\Delta_V\cap U^2$ is closed in $U^2$ : by characterisation, $\Delta_V$ is locally closed.
\end{proof}

\begin{lem}\label{MAG.4.26}
If $\phi:V\to W$ is a regular map of concrete equivarieties over $k$, then the graph $\Gamma_\phi$ of $\phi$ is a locally closed subvariety of $V\xx W$, and $\Gamma_\phi$ is closed if $W$ is separated. Furthermore, the map $i:V\to \Gamma_\phi$, $P\mapsto (P,\phi(P))$ is a regular isomorphism.
\end{lem}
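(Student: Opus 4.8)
The plan is to realise the graph as the preimage of a diagonal under a regular map. Consider $h:=\phi\xx Id_W:V\xx W\to W\xx W$, $(P,Q)\mapsto(\phi(P),Q)$, which is regular by Corollary \ref{REGPROD'}. Since the pair $(\phi(P),Q)$ lies in $\Delta_W$ precisely when $\phi(P)=Q$, we have the set-theoretic equality $\Gamma_\phi=h^{-1}(\Delta_W)$.

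By Lemma \ref{MAG.4.25}, $\Delta_W$ is locally closed in $W\xx W$, say $\Delta_W=O\cap X$ with $O$ open and $X$ closed; then $\Gamma_\phi=h^{-1}(O)\cap h^{-1}(X)$ is locally closed in $V\xx W$ by continuity of $h$. Hence, equipped with the restricted sheaf $\O_{V\xx W}|_{\Gamma_\phi}$, it is a subvariety of $V\xx W$ by Proposition \ref{LOCLOS}. If moreover $W$ is separated, then $\Delta_W$ is closed in $W\xx W$ by Proposition \ref{MAG.4.24}, so $\Gamma_\phi=h^{-1}(\Delta_W)$ is closed in $V\xx W$.

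It remains to check that $i:V\to\Gamma_\phi$, $P\mapsto(P,\phi(P))$, is a regular isomorphism. Regarded as a map into $V\xx W$, $i=(Id_V,\phi)$ is regular by the universal property of the product (Proposition \ref{MAG.4.21}); its corestriction to the subvariety $\Gamma_\phi$ stays regular, since for open $U\subs\Gamma_\phi$ and $s\in\O_{\Gamma_\phi}(U)$ the function $s$ is locally the restriction of some $g\in\O_{V\xx W}$, so $s\circ i$ is locally $g\circ i\in\O_V$ and thus lies in $\O_V(i^{-1}U)$ by the sheaf property, while continuity is automatic for the subspace topology. In the other direction, the inclusion $j:\Gamma_\phi\into V\xx W$ is regular by Proposition \ref{LOCLOS}, and the first projection $p:V\xx W\to V$ is regular (Lemma \ref{OPENPROJ} in the affine case, and by the glueing construction of $\O_{V\xx W}$ in general, as noted after Lemma \ref{MAG.4.19}), so $p\circ j:\Gamma_\phi\to V$ is regular. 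As $i$ and $p\circ j$ are mutually inverse bijections, $i$ is a regular isomorphism.

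The only point requiring care is the observation that corestricting a regular map to a subvariety containing its image remains regular (used both for $i$ and for the identification $\Gamma_\phi\cong V$); beyond that, the statement is an immediate combination of Corollary \ref{REGPROD'}, Lemma \ref{MAG.4.25}, Proposition \ref{LOCLOS} and Proposition \ref{MAG.4.24}, the key idea being simply to express $\Gamma_\phi$ as $h^{-1}(\Delta_W)$.
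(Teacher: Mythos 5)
Your proof is correct and follows essentially the same route as the paper's: both express $\Gamma_\phi$ as $(\phi\xx Id_W)^{-1}\Delta_W$ and invoke Corollary \ref{REGPROD'}, Lemma \ref{MAG.4.25}, Proposition \ref{LOCLOS} and Proposition \ref{MAG.4.24}, and both obtain the isomorphism from the product map $(Id_V,\phi)$ with the first projection as inverse. Your explicit verification that corestricting a regular map to a subvariety containing its image remains regular is in fact slightly more careful than the paper's, which leaves that point implicit.
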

\begin{proof}
We generalise and expand the proof of \cite{MAG}, Corollary 4.26. By Corollary \ref{REGPROD'}, the map $(P,Q)\in V\xx W\mapsto (\phi(P),Q)\in W\xx W$ is regular as a product of the two regular maps $\phi$ and $Id_W$. As $\Gamma_\phi=(\phi\xx Id_W)^{-1} \Delta_W$, $\phi\xx Id_W$ is continuous and $\Delta_W$ is locally closed by Lemma \ref{MAG.4.25}, $\Gamma_\phi$ itself is locally closed and a subvariety by Proposition \ref{LOCLOS}. If $W$ is separated, by Proposition \ref{MAG.4.24} $\Delta_W$ is closed in $W\xx W$ and $\Gamma_\phi$ is closed.
As for the second assertion, let $i:V\to V\xx W$ be the product of $Id_V$ and $\phi$ : it is regular by Proposition \ref{MAG.4.21} and an embedding. Let $j:\Gamma_\phi\to V$ be the first projection : we have $i\circ j=Id$ and $j\circ i=Id$, and if $U\subs V$ is open, then $j^{-1} U=(U\xx W)\cap \Gamma_\phi$ is open in $\Gamma_\phi$, so $j$ is continuous; for every $s\in \O_V(U)$, we have $s\circ \pi\in \O_{V\xx W}(U\xx W)$ because $\pi:V\xx W\to V$ is regular and as for every $(P,Q)\in j^{-1} U$, we have $s\circ j(P,Q)=s(P)=s\circ \pi(P,Q)$, we get $s\circ j\in \O_{\Gamma_\phi}(j^{-1} U)$, $j$ itself is regular, therefore the corestriction of $i$ to $\Gamma_\phi$ is a regular isomorphism.
\end{proof}

\begin{prop}\label{MAG.4.27}
Let $V$ be an equivariety over $k$. The following are equivalent :\\
i) $V$ is separated\\
ii) for all affine open subvarieties $U,U'\subs V$, $U\cap U'$ is an affine open subvariety and the map $k\{U\} * k\{U'\}\to k\{U\cap U'\}$, $f*g\mapsto f|_{U\cap U'}.g|_{U\cap U'}$, is surjective\\
iii) there exists an affine open cover of $V$ such that property (ii) holds for all pairs of members of the cover (equivalently, this holds for any cover).
\end{prop}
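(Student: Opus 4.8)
The plan is to prove the cycle $(i)\Imp(ii)\Imp(iii)\Imp(i)$, the pivotal tools being Proposition~\ref{MAG.4.24} (separatedness of $V$ is equivalent to $\Delta_V$ being closed in $V\xx V$) and Lemma~\ref{MAG.3.22} (a regular morphism of concrete affine equivarieties is a closed immersion precisely when it is surjective on global sections). Both non-trivial implications hinge on the same observation: for affine open subvarieties $U,U'\subs V$ the product equivariety $U\xx U'$ is a concrete affine equivariety (Proposition~\ref{AFFPROD}) with $J(U\xx U')=k\{U\}*k\{U'\}$, and the diagonal map $\delta:U\cap U'\to U\xx U'$, $P\mapsto(P,P)$, is regular by Lemma~\ref{REGPROD} — its composites with the two projections are the regular open inclusions of $U\cap U'$ into $U$ and into $U'$ — with induced $k$-morphism $\delta^*:k\{U\}*k\{U'\}\to J(U\cap U')$ sending $f*g$ to the function $P\mapsto(f*g)(P,P)=f(P)g(P)$, i.e. to $f|_{U\cap U'}\cdot g|_{U\cap U'}$, where we use the description recalled in Section~\ref{OPAFF} of the elements of $k\{U\}*k\{U'\}$ as functions on $U\xx U'$. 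Thus $\delta^*$ is exactly the map occurring in $(ii)$.

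For $(i)\Imp(ii)$ I would argue as follows. Assuming $V$ separated, $\Delta_V$ is closed in $V\xx V$ by Proposition~\ref{MAG.4.24}. Fix affine opens $U,U'$ and an affine open cover $V=\bigcup_iV_i$; then $U\xx U'=\bigcup_{i,j}(U\cap V_i)\xx(U'\cap V_j)$ is open in $V\xx V$, and by Lemma~\ref{ZARIND} the Zariski topology of the product $U\xx U'$ coincides with the topology induced from $V\xx V$, so $\Delta:=\Delta_V\cap(U\xx U')=\{(P,P):P\in U\cap U'\}$ is closed in $U\xx U'$. By Lemma~\ref{CLSVAR}, $(\Delta,\O_{U\xx U'}|_\Delta)$ is a concrete affine equivariety and the inclusion $\Delta\into U\xx U'$ is a closed immersion. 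The map $\delta$ above corestricts to a regular map $U\cap U'\to\Delta$ whose inverse is the corestriction of the first projection $U\xx U'\to U$, so $U\cap U'\cong\Delta$ is affine. Finally, applying Lemma~\ref{MAG.3.22} to the closed immersion $\Delta\into U\xx U'$ yields a surjection $k\{U\}*k\{U'\}=J(U\xx U')\onto J(\Delta)\cong J(U\cap U')=k\{U\cap U'\}$, which by the computation of $\delta^*$ above is precisely the map in $(ii)$.

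$(ii)\Imp(iii)$ is immediate: $V$ has an affine open cover by Lemma~\ref{OPENBASIS}, and $(ii)$ applies verbatim to every pair of members. For $(iii)\Imp(i)$, let $V=\bigcup_iV_i$ be an affine open cover satisfying $(ii)$ on all pairs; then $V\xx V=\bigcup_{i,j}(V_i\xx V_j)$ is an open cover. By Proposition~\ref{MAG.4.24} it suffices that $\Delta_V$ be closed in $V\xx V$, and since closedness is local on this cover it suffices that $\Delta_V\cap(V_i\xx V_j)$ be closed in $V_i\xx V_j$ for each pair. For a fixed pair, $V_i\cap V_j$ is affine by hypothesis and $V_i\xx V_j$ is affine by Proposition~\ref{AFFPROD}; the diagonal map $\delta_{ij}:V_i\cap V_j\to V_i\xx V_j$ is regular with image $\Delta_V\cap(V_i\xx V_j)$, and $\delta_{ij}^*:k\{V_i\}*k\{V_j\}\to k\{V_i\cap V_j\}$ is the restriction-and-multiplication map, which is surjective by hypothesis; so by Lemma~\ref{MAG.3.22}, $\delta_{ij}$ is a closed immersion and hence its image is closed in $V_i\xx V_j$. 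This proves $\Delta_V$ closed, so $V$ is separated. The parenthetical in $(iii)$ then comes for free: we have shown that $(iii)$ for one cover implies $(i)$, which implies $(ii)$, which implies $(iii)$ for every affine open cover.

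The delicate step is $(i)\Imp(ii)$ — specifically, establishing that $U\cap U'$ is affine by realising it, with its induced structure, as the closed diagonal subvariety of $U\xx U'$ (which requires combining Lemmas~\ref{ZARIND} and \ref{CLSVAR} with the elementary fact that a regular map corestricts to a regular map onto any subvariety containing its image), and then checking that the surjection returned by Lemma~\ref{MAG.3.22} is literally the map $f*g\mapsto f|_{U\cap U'}\cdot g|_{U\cap U'}$ of the statement and not merely something isomorphic to it. By contrast $(iii)\Imp(i)$ is a routine localisation argument and $(ii)\Imp(iii)$ is trivial.
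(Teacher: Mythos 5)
Your proposal is correct and follows essentially the same route as the paper: both arguments reduce separatedness to closedness of the diagonal via Proposition \ref{MAG.4.24}, use Lemma \ref{ZARIND} to pass between the induced and Zariski topologies on $U\xx U'$, identify $U\cap U'$ with the closed subvariety $\Delta_V\cap(U\xx U')$ to get affineness (the paper routes this through the graph Lemma \ref{MAG.4.26}, you inline the same isomorphism directly), and invoke Lemma \ref{MAG.3.22} to translate closedness of that subvariety into surjectivity of the restriction-and-multiplication map. The differences are purely presentational, so nothing further is needed.
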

\begin{proof}
We again follow the lines of \cite{MAG}, Theorem 4.27, replacing tensor products by $*$-products. First let $U,U'\subs V$ be two affine open subvarieties and $\Delta=\Delta_V\subs V^2$ the diagonal of $V$.\\
a) Suppose that $V$ is separated, so that $\Delta$ is closed by Proposition \ref{MAG.4.24} : the graph $\Gamma_i$ of the inclusion map $i:U\cap U'\into V$ is the set $(U\xx U')\cap \Delta \subs (U\cap U')\xx V$, a Zariski closed subset of $U\xx U'$ by Lemma \ref{ZARIND}, so by Lemma \ref{CLSVAR}, $\Gamma_i$ is an affine algebraic equivariety, as well as $U\cap U'$ by Lemma \ref{MAG.4.26} : $U\cap U'$ is an affine open subset of $V$.\\
b) Suppose $U\cap U'$ is affine. The $k$-morphism $k\{U\}*k\{U'\}\to k\{U\cap U'\}$, $f*g\mapsto f|_{U\cap U'}.g|_{U\cap U'}$ induced by the regular embedding $i:U\cap U'\into U\xx U'$ is isomorphic to the $k$-morphism $J(i):k\{U\xx U'\}\to k\{U\cap U'\}$ because $k\{U\xx U'\}\cong k\{U\} * k\{U'\}$ by the duality between affine algebraic equivarieties and affine $*$-algebras of \cite{EQAG}, Theorem 4.15. It follows that the set $(U\xx U')\cap \Delta$, which is the image of $i:U\cap U'\to U\xx U'$, is Zariski closed if and only if $k\{U\}*k\{U'\}\to k\{U\cap U'\}$ is surjective, by Lemma \ref{MAG.3.22}.\\
Now assume (i) and let $U,U'$ be as before. As $V$ is separated, by (a) the open set $U\cap U'$ is affine, and as $\Delta$ is closed in $V^2$ by Proposition \ref{MAG.4.24}, $(U\xx U')\cap \Delta$ is relatively closed in $U\xx U'$, so Zariski closed by Lemma \ref{ZARIND}; by (b), $k\{U\}*k\{U'\}\to k\{U\cap U'\}$ is surjective. 
If (ii) holds, then by definition there exists an open cover $V=\bigcup_i U_i$ for some $U_i$ open affine subvarieties, and (iii) holds.
Assume (iii), and let $V=\bigcup_i U_i$ be an open cover by affine subvarieties with this property. 
As $U_i\cap U_j$ is affine for all $i,j$, by (b) every set $(U_i\xx U_j)\cap \Delta$ is closed in $U_i\xx U_j$. It follows as in Proposition \ref{MAG.4.24} that $\Delta=\bigcup_{i,j} (U_i\xx U_j)\cap \Delta$ is closed, because $V=\bigcup_{i,j} U_i\xx U_j$ is an open cover of $V\xx V$, and thus $V$ is separated by this same proposition.
\end{proof}

\subsection*{Quasi-affine algebraic equivarieties}
\noindent Our first examples of algebraic equivarieties which are not, in general, affine algebraic equivarieties, are the quasi-affine algebraic equivarieties, i.e. the open subvarieties of affine algebraic equivarieties.

\begin{prop}\label{QAFFALGVAR}
Every quasi-affine algebraic equivariety is an algebraic equivariety.
\end{prop}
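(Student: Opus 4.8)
The plan is to verify the two defining conditions --- separatedness and compactness --- one at a time for a quasi-affine algebraic equivariety $V$, that is, for an open subvariety $V$ of an affine algebraic equivariety $W$.

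\textbf{Separatedness.} Since $V$ is open in $W$ it is in particular locally closed, so by Proposition \ref{LOCLOS} the inclusion $i:V\into W$ is a regular map, and it is of course injective. Moreover $W$ is separated: being affine, its diagonal $\Delta_W=\{(P,Q)\in W\xx W:\forall f\in J(W),\,f(P)=f(Q)\}$ is Zariski closed in $W\xx W$, as recorded after Proposition \ref{AFFPROD}, so $W$ is separated by Proposition \ref{MAG.4.24}. Now given two regular maps $\phi_1,\phi_2:U\to V$ with $U$ an affine algebraic equivariety, the composites $i\circ\phi_1$ and $i\circ\phi_2$ are regular maps $U\to W$, and since $i$ is injective we have $\{P\in U:\phi_1(P)=\phi_2(P)\}=\{P\in U:i\circ\phi_1(P)=i\circ\phi_2(P)\}$, which is closed in $U$ because $W$ is separated. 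Hence $V$ is separated. (Alternatively, one could cover $V$ by affine open subsets of $W$ and apply condition (iii) of Proposition \ref{MAG.4.27}, the pairwise intersections being affine with the required surjectivity because $W$ is separated; the argument through $i$ is the shortest and avoids reproving anything about products.)

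\textbf{Compactness.} Here I would argue that the underlying space of $V$ is quasi-compact. Up to isomorphism, the affine algebraic equivariety $W$ is homeomorphic to an affine algebraic subvariety of some $k^n$, whose Zariski topology is Noetherian, $k[x_1,\dots,x_n]$ being a Noetherian ring; hence $W$ is a Noetherian topological space, and so is its open subset $V$. Since a Noetherian space is quasi-compact, $V$ is compact. Combined with the previous paragraph, $V$ is separated and compact, i.e. an algebraic equivariety.

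\textbf{The main obstacle.} The only genuinely substantive point is separatedness, and the key observation is that the definition of separatedness for $V$ involves only regular maps \emph{into} $V$ from affine algebraic equivarieties, so that it is inherited directly from the ambient affine equivariety $W$ through the injective regular inclusion $i$, with no need to construct or analyse a product of $V$ with itself. Compactness is routine once one recalls that affine algebraic equivarieties have Noetherian underlying spaces.
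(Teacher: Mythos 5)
Your proof is correct, and the separatedness argument is essentially identical to the paper's: both reduce to the ambient affine equivariety via the injective regular inclusion furnished by Proposition \ref{LOCLOS}, using the equality $\{P\in U:\phi_1(P)=\phi_2(P)\}=\{P\in U:i\phi_1(P)=i\phi_2(P)\}$ and the separatedness of the affine ambient space. The only divergence is in the compactness step: the paper writes the quasi-affine equivariety explicitly as a \emph{finite} union of basic open subsets $D_V(h_i)$, each an affine algebraic equivariety by \cite{EQAG}, Lemma 3.15, and invokes \cite{EQAG}, Lemma 3.16, whereas you deduce quasi-compactness from the Noetherianness of the underlying Zariski topology of an affine subvariety of $k^n$. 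Both routes are valid and rest on the same underlying fact (Noetherianness of $k[x_1,\dots,x_n]$ is what makes the paper's cover finite in the first place); the paper's version has the small advantage of directly exhibiting a finite \emph{affine} open cover, which matches the way compactness of equivarieties is handled via the cited lemmas, while yours is marginally more self-contained topologically.
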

\begin{proof}
Let $W$ be a quasi-affine algebraic variety : it is an open subvariety of an affine algebraic variety $V$. 
We may assume that $V$ is an affine algebraic subvariety of $k^n$ say, and by definition of the Zariski topology we may write $W=\bigcup_{i=1}^m W_i$ for some basic open subsets $W_i=D_V(h_i)$ of $V$, with $h_i\in k[V]$ : as the $W_i$'s are affine algebraic equivarieties by \cite{EQAG}, Lemma 3.15, open in $W$ and in finite number, $W$ is a compact equivariety over $k$ by \cite{EQAG}, Lemma 3.16. Now let $U$ 
be a concrete affine algebraic equivariety and $\phi_1,\phi_2:U\to W$ regular morphisms : the embedding $i:W\into V$ is regular by Proposition \ref{LOCLOS}, so that $i\phi_1,i\phi_2:U\to V$ are regular as well; now we have $\{P\in U : \phi_1(P)=\phi_2(P)\}=\{P\in U : i\phi_1(P)=i\phi_2(P)\}$ and this set is closed because $V$ is separated; it follows that $W$ itself is separated, and is therefore an algebraic variety. 
\end{proof}
\begin{rem}
If $k$ is not algebraically closed, every quasi-affine algebraic equivariety is in fact an affine algebraic equivariety. Indeed, with the notations of the preceding proof, let $N(X_i:i\in I)$ be an appropriate normic form over $k$ : we have $W=\bigcup_i D_V(h_i)=D_V(N(h_i:i\in I))$, so $W$ is a basic open subset of $V$, which is affine by (already cited) \cite{EQAG}, Lemma 3.15. The distinction thus only occurs over algebraically closed fields; we however develop a uniform theory which is valid over any field.
\end{rem}

\section{Locally closed subvarieties of algebraic equivarieties}\label{LOCVAR}
Our original motivation for the introduction of algebraic equivarieties was to provide an intrinsic notion of a geometric space over $k$, encompassing all the usual, i.e. quasi-projective, (equi)varieties. As these are the open subspaces of projective varieties, which are themselves essentially the closed subvarieties of projective spaces, we have to deal by Theorem \ref{PROJSP} with \emph{locally closed subvarieties} of certain algebraic equivarieties. We finish this article by focusing on these and applying our results to quasi-projective equivarieties, proving that every locally closed subvariety of an algebraic equivariety is naturally an algebraic equivariety for the induced structure (Theorem \ref{LOCSEP}), which will be a sophisticated version of Proposition \ref{LOCLOS}. We begin by the description of projective spaces as algebraic equivarieties. As before, we may restrict ourselves to concrete equivarieties.

\subsection*{Projective spaces as algebraic equivarieties}
If $k$ is any field, recall that $\mbb P^n(k)$, the $n$-th projective space over $k$, is covered by the subsets $U_i=\{[a_0:\ldots:a_n] : a_i\neq 0\}$, $i=0,\ldots,n$; the \emph{Zariski topology on $\mbb P^n(k)$} is defined as in the case where $k$ is algebraically closed, i.e. the basic open subsets have the form $D(F)=\{[a_0:\ldots:a_n]\in \mbb P^n(k) : F(a_0,\ldots,a_n)\neq 0\}$ for $F$ any \emph{homogeneous} polynomial of $k[X_0,\ldots,X_n]$, and we have the bijections $u_i:U_i=D(X_i)\to \mbb A^n(k)=k^n$, $[a_0:\ldots:a_n]\mapsto (a_0/a_i,\ldots,a_n/a_i)$ ($a_i/a_i$ omitted on the right) with inverse bijections $(a_1,\ldots,a_n)\mapsto [a_1:\ldots:1:\ldots:a_n]$ ($1$ in $i$-th position on the right).
In general we have the
\begin{prop}\label{MAG.6.4}
The bijections $u_i:U_i\to k^n$ are homeomorphisms for the Zariski topology, and the $U_i$'s are open.
\end{prop}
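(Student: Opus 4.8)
The plan is to dispose of the two assertions separately; the openness of the $U_i$ is immediate, while the homeomorphism property is the classical computation for projective space, carried out with a little care. Each $U_i=D(X_i)$ and $X_i\in k[X_0,\dots,X_n]$ is homogeneous, so $U_i$ is by definition a basic open subset for the Zariski topology on $\mbb P^n(k)$; this settles the second claim. For the first, since $u_i:U_i\to k^n$ is already known to be a bijection, it suffices to show that $u_i$ is continuous and closed, where $U_i$ carries the topology induced from $\mbb P^n(k)$ and $k^n$ its affine Zariski topology, whose closed sets are, as recalled in Section~\ref{OPAFF}, exactly the loci $\ms Z(S)$ of common zeros of sets $S$ of polynomials.

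The tool is the usual passage between homogeneous polynomials in $X_0,\dots,X_n$ and arbitrary polynomials in the coordinates $Y$ of $k^n$ (the index $i$ omitted): for homogeneous $F$ of degree $d$ the \emph{dehomogenisation} $F_\ast:=F(Y_1,\dots,1,\dots,Y_n)$ ($1$ in the $i$-th slot), and for $g$ of degree $e$ the \emph{homogenisation} $g^\ast$, obtained by multiplying each monomial of $g$ by the power of $X_i$ making $g^\ast$ homogeneous of degree $e$. The only facts I need are that for $[a_0:\dots:a_n]\in U_i$ one has $F(a_0,\dots,a_n)=a_i^{\,d}\,F_\ast(u_i([a]))$ and $g(u_i([a]))=a_i^{-e}\,g^\ast(a_0,\dots,a_n)$; since $a_i\neq 0$ on $U_i$ and $u_i$ is onto, vanishing on one side is equivalent to vanishing on the other, giving $u_i(U_i\cap\ms Z(F))=\ms Z(F_\ast)$ and $u_i^{-1}(\ms Z(g))=U_i\cap\ms Z(g^\ast)$.

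From here the two continuities are pure bookkeeping. For a closed set $\ms Z(S)\subseteq k^n$ we get $u_i^{-1}(\ms Z(S))=\bigcap_{g\in S}\bigl(U_i\cap\ms Z(g^\ast)\bigr)$, closed in $U_i$, so $u_i$ is continuous; and a closed subset of $U_i$, written as $U_i\cap\bigcap_{j}\ms Z(F_j)$ with the $F_j$ homogeneous, has image $\bigcap_j\ms Z\bigl((F_j)_\ast\bigr)$ under the bijection $u_i$, which is closed in $k^n$, so $u_i$ is a closed map. Hence $u_i$ is a homeomorphism.

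I do not anticipate a genuine obstacle: nothing in the argument uses algebraic closure or the equiresidual refinements, since the statement concerns only the underlying point-set topologies, whose closed sets are in both cases the common zero loci of families of polynomials. The only point deserving attention is keeping the homogenisation/dehomogenisation conventions consistent and noting that the factor $a_i^{\pm 1}$ is invertible precisely because we are on $U_i$.
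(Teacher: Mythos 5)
Your proposal is correct and is essentially the paper's argument: both rest on the dehomogenisation/homogenisation correspondence $F\mapsto F_*$, $g\mapsto g^*$ and the identities relating values via powers of the invertible coordinate $a_i$, the only cosmetic difference being that you phrase everything in terms of closed sets $\ms Z(\cdot)$ and "continuous and closed bijection," while the paper works with the basic open sets $D(F)$, $D(F_*)$, $D(F^*)$ and shows both $u_0$ and $u_0^{-1}$ are continuous.
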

\begin{proof}
As in the algebraically closed version (\cite{MAG}, Proposition 6.4), it suffices to prove this for $i=0$. We have $U_0=D(X_0)$, open by definition, and if $F\in k[X_0,\ldots,X_n]$ is homogeneous, $D(F)\cap U_0=\{[a_0:\ldots:a_n]\in \mbb P^n(k):F(\ov a)\neq 0\ \&\ a_0\neq 0\}=\{[\ov a]\in U_0 : F(1,a_1/a_0,\ldots,a_n/a_0)\neq 0\}$. 
Let $F_*(X_1,\ldots,X_n)=F(1,X_1,\ldots,X_n)$ : by what precedes we have $D(F)\cap U_0=u_0^{-1}(D(F_*))$, so $u_0(D(F)\cap U_0)=D(F_*)$, and $u_0^{-1}$ is continuous. As for $u_0$, if $F\in k[X_1,\ldots,X_n]$, $F^*(X_0,\ldots,X_n)=X_0^dF(X_1/X_0,\ldots,X_n/X_0)$ is the homogenisation of $F$ and $(a_1,\ldots,a_n)\in D(F)$, we have $F^*(1,a_1,\ldots,a_n)=F(\ov a)\neq 0$, so $u_0^{-1}(a_1,\ldots,a_n)=[1:a_1:\ldots:a_n]\in D(F^*)\cap U_0$; conversely, if $[\ov a]\in D(F^*)\cap U_0$, we have $[\ov a]=[\ov b]$ for $\ov b=(1,a_1/a_0,\ldots,a_n/a_0)$ and therefore $0\neq F^*(a_0,\ldots,a_n)=a_0^d F(b_1,\ldots,b_n)$, so $F(b_1,\ldots,b_n)\neq 0$ and $u_0([\ov a])=u_0([\ov b])=(b_1,\ldots,b_n)\in D(F)$, so $D(F)=u_0(U_0\cap D(F^*))$, $u_0^{-1}(D(F))=U_0\cap D(F^*)$ and $u_0$ is continuous. Finally, $u_0$ is a homeomorphism.
\end{proof}

\begin{defi}
If $F,G\in k[X_1,\ldots,X_n]$ have respective degrees $d$ and $e$, the \emph{homogenisation of $F/G$} is the rational fraction $(F/G)^*:=X_0^{e-d} F^*(X_0,\ldots,X_n)/G^*(X_0,\ldots,X_n)$, where $F^*$ and $G^*$ are the respective homogenisations of $F$ and $G$.
\end{defi}
\begin{rem}
The homogenisation of $F/G$ is well defined : if $F/G=H/L$ with $u=deg(H)$ and $v=deg(L)$, we have $e-d=v-u$ and $F^*L^*=H^*G^*$, whence $X_0^{e-d}F^*/G^*=X_0^{v-u} H^*/L^*$.
\end{rem}

\noindent Let $U\subs \mbb P^n(k)$ be open : we will say that a function $f:U\to k$ is \emph{regular at $P\in U$} if there exists an open neighbourhood $V\subs U$ of $P$ and $g,h\in k[X_0,\ldots,X_n]$ homogeneous of the same degree, such that for every $Q\in V$, $h(Q)\neq 0$ and $f(Q)=g(Q)/h(Q)$. We note $k[X_0,\ldots,X_n]_h$ the sub-$k$-algebra of $k(X_0,\ldots,X_n)$ consisting of elements of the form $f/g$, with $f$ and $g$ homogeneous of the same degree and $\O(U)$ the set of regular functions on $U$ (i.e. of functions $f:U\to k$ regular ar each $P\in U$), a sheaf on $\mbb P^n(k)$. Finally, we let $\O_i$ be the sheaf of regular functions on $U_i$, the restriction of $\O$ to $U_i$. A general version of Lemma 6.8 and Proposition 6.9 of \cite{MAG}, is the following

\begin{thm}\label{PROJSP}
The projective space $\mbb P^n(k)$, equiped with its sheaf $\O$ of regular functions, is an algebraic equivariety over $k$.
\end{thm}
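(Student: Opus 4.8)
The plan is to follow \cite{MAG}, Lemma 6.8 and Proposition 6.9, recast with canonical localisations. Recall that by definition $\mbb P^n(k)$ is \emph{algebraic} iff it is both separated and compact. The first step is to upgrade the homeomorphisms $u_i:U_i\to k^n$ of Proposition \ref{MAG.6.4} to isomorphisms of equivarieties $(U_i,\O_i)\cong(k^n,\O_{k^n})$; granting this, since $\O_i$ is by construction $\O|_{U_i}$, every point of $\mbb P^n(k)$ has an affine open neighbourhood, so $(\mbb P^n(k),\O)$ is an equivariety, and being the finite union of the affine open subvarieties $U_0,\ldots,U_n$ it is compact by \cite{EQAG}, Lemma 3.16 (exactly as in the proof of Proposition \ref{QAFFALGVAR}). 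It then remains only to prove separatedness, for which I would invoke the local criterion of Proposition \ref{MAG.4.27}, condition (iii), applied to the cover $(U_i)_{0\le i\le n}$.

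For the chart isomorphisms, fix $i$; since $u_i$ is already a homeomorphism it suffices to show that $f\mapsto f\circ u_i$ sends $\O_{k^n}(u_i(V))$ onto $\O_i(V)$ for every open $V\subseteq U_i$. Both sheaves being defined by the same local condition (``locally a quotient of regular functions''), this reduces to the pointwise dehomogenisation/homogenisation correspondence already in play in the excerpt: a pair $g,h\in k[X_0,\ldots,X_n]$ homogeneous of a common degree with $h$ nonvanishing on an open piece of $U_i$ corresponds, via the substitution $X_i=1$, to a pair of polynomials in $n$ variables representing $f\circ u_i^{-1}$, and conversely a quotient $p/q$ of polynomials in $n$ variables is recovered by homogenising as in the definition of the homogenisation of a rational fraction given above; this correspondence preserves ``regular at a point'' on each side, whence $u_i$ is a regular isomorphism. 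This is the analogue of \cite{MAG}, Lemma 6.8.

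For separatedness I would check condition (ii) of Proposition \ref{MAG.4.27} for every pair $U_i,U_j$. The case $i=j$ is trivial. For $i\ne j$, $U_i\cap U_j=D(X_i)\cap D(X_j)$ corresponds under $u_i$ to a basic open subset $D_{k^n}(x)$ of $k^n$, where $x$ is the coordinate function $a_j/a_i$; hence $U_i\cap U_j$ is affine by \cite{EQAG}, Lemma 3.15, so it is an affine open subvariety of $\mbb P^n(k)$. It then remains to prove that the $*$-morphism $k\{U_i\}*k\{U_j\}\to k\{U_i\cap U_j\}$, $f*g\mapsto f|_{U_i\cap U_j}\cdot g|_{U_i\cap U_j}$, is surjective. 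Identifying (via the chart isomorphisms, \cite{EQAG}, Lemma 3.15 and $J(\cdot)\cong k[\cdot]_M$) $k\{U_i\}\cong k\{k^n\}$ and $k\{U_i\cap U_j\}\cong k\{D_{k^n}(x)\}=(k[X_1,\ldots,X_n]_x)_M$, one observes that the image of this morphism contains all the coordinate functions of $k^n$ (restricted from the $U_i$-side) and the function $1/x$ (the restriction to $U_i\cap U_j$ of a suitable coordinate function of the $U_j$-chart, namely $a_i/a_j$); hence it contains the $k$-subalgebra generated by these, which is exactly the classical coordinate ring $k[X_1,\ldots,X_n]_x$ of $D_{k^n}(x)$. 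From this one deduces that the image is all of $k\{U_i\cap U_j\}$, and Proposition \ref{MAG.4.27} then gives that $\mbb P^n(k)$ is separated, hence algebraic.

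The main obstacle is precisely this last deduction. Over an algebraically closed field $k\{\cdot\}$ is the ordinary reduced coordinate ring, so ``the image contains $k[X_1,\ldots,X_n]_x$'' already yields surjectivity; over a general commutative field the global-section algebra is the strictly larger canonical localisation $(\cdot)_M$, and one must argue that once the classical coordinate ring lies in the image of the $*$-morphism, its entire canonical localisation does too --- equivalently, that the image of a morphism of affine $*$-algebras is again stable under the canonical localisation operation. This is the non-trivial use of canonical localisations announced in the introduction, and I expect it to require an argument in the spirit of the structure theory of \cite{EQAG} (for instance, that quotients and images of affine $*$-algebras are again affine $*$-algebras), rather than following formally from the affine duality alone.
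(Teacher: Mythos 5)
Your plan coincides with the paper's own proof in every structural respect: the chart maps $u_i$ are upgraded to regular isomorphisms $(U_i,\O_i)\cong (k^n,\O_{k^n})$ by the homogenisation/dehomogenisation correspondence, compactness follows from the finite affine cover $\mbb P^n(k)=\bigcup_i U_i$, and separatedness is obtained from the local criterion of Proposition \ref{MAG.4.27} applied to this cover, with $U_i\cap U_j$ identified under $u_i$ with a basic open subset $D_{k^n}(x)$, hence affine by \cite{EQAG}, Lemma 3.15. However, the step you explicitly leave open --- deducing surjectivity of $k\{U_i\}*k\{U_j\}\to k\{U_i\cap U_j\}$ from the fact that its image contains the classical coordinate ring $k[X_1,\ldots,X_n]_x$ --- is precisely where the content of the theorem lies, so as written the argument is incomplete. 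Moreover the reformulation you propose (that the image of a morphism of affine $*$-algebras is ``stable under canonical localisation'') is not what the paper establishes, and proving it for an arbitrary subalgebra of an affine $*$-algebra would be a genuinely stronger statement than what is needed here.

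The paper closes the gap differently: it arranges matters so that the $*$-morphism in question is, on the nose, the canonical localisation of a surjection of ordinary finitely generated $k$-algebras. Concretely, for $i=0$, $j=1$, part (i) of the proof together with \cite{EQAG}, Theorem 3.17 gives identifications $k\{U_0\}\cong k[\frac{X_1}{X_0},\ldots,\frac{X_n}{X_0}]_M$, $k\{U_1\}\cong k[\frac{X_0}{X_1},\ldots,\frac{X_n}{X_1}]_M$ and $k\{U_0\cap U_1\}\cong A_M$ with $A=k[\frac{X_1}{X_0},\ldots,\frac{X_n}{X_0},\frac{X_0}{X_1}]$, under which the two restriction morphisms become the canonical localisations of the inclusions $\phi_0,\psi_0$ of the \emph{un-localised} polynomial algebras into $A$. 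Since $im(\phi_0)\cup im(\psi_0)$ generates $A$ as a $k$-algebra, the morphism $\phi_0\o\psi_0$ of ordinary $k$-algebras is surjective onto $A$, and applying the functor $(\cdot)_M$ --- which sends surjections to surjections --- yields surjectivity of $(\phi_0)_M*(\psi_0)_M:k\{U_0\}*k\{U_1\}\to A_M=k\{U_0\cap U_1\}$. So the missing ingredient in your proposal is not a new piece of structure theory about images of $*$-algebras, but the careful identification of the restriction maps with canonical localisations of polynomial inclusions, after which only the functoriality of $(\cdot)_M$ on surjections is needed; you should supply this identification to complete the proof.
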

\begin{proof}
As $\mbb P^n(k)=\bigcup_{i=0}^n U_i$ and each $U_i$ is open, it suffices to show that $(\mbb P^n(k),\O)$ is an equivariety over $k$, and by Proposition \ref{MAG.4.27} that :\\
i) $(U_i,\O_i)$ is an affine algebraic variety over $k$ for each $i$ and $u_i:U_i\cong k^n$ is a regular isomorphism, for each $i$\\
ii) $U_i\cap U_j$ is an open affine subvariety for all $i,j$\\
iii) $\mbb P^n(k)$ is separated.\\
i) It suffices to work with $i=0$; if $U\subs k^n$ is open and $f\in \O(U)=\O_{k^n}(U)$ is regular, we consider the map $f^*=f\circ u_0:[\ov a]\in u_0^{-1} U\mapsto f(\ov b)$, with $[\ov a]=[1:\ov b]$, i.e. $\ov b=b_1,\ldots,b_n$, $b_i=a_i/a_0$ for $i=1,\ldots,n$. 
By regularity of $f$, for every $\ov b\in U$ there exist $G,H\in k[X_1,\ldots,X_n]$, of degrees $d$ and $e$ say, and an open neighbourhood $V\subs U$ of $\ov b$ such that for all $\ov c\in V$, $H(\ov c)\neq 0$ and $f(\ov c)=G(\ov c)/H(\ov c)$. If $[\ov a]=[1:\ov b]\in u_0^{-1}(V)$, we thus have $f^*([\ov a])=f(\ov b)=G(\ov b)/H(\ov b)=1^{e-d} G^*(1,\ov b)/H^*(1,\ov b)=(G/H)^*([1:\ov b])=(G/H)^*([\ov a])$, so the restriction of $f^*$ to $u_0^{-1}(V)\to k$ has a representation as a quotient of homogeneous elements of $k[X_0,\ldots,X_n]$ of the same degree, and therefore $f^*$ is regular on $u_0^{-1}(V)$, i.e. lies in $\O_0(u_0^{-1}(V))$ and as this is true for every $[\ov a]\in u_0^{-1}(U)$, $f^*$ is regular over $u_0^{-1}(U)$ and we have defined a map $\Phi_U:f\in\O(U)\mapsto f^*=f\circ u_0\in\O_0(u_0^{-1}(U))=(u_0)_*\O_0(U)$, which by construction is obviously a morphism of $k$-algebras. With the same notations, if $f^*=\Phi_U(f\in \O(U))\equiv 0$ and $\ov b\in U$, we have $[1:\ov b]\in u_0^{-1}(U)$, so $f(\ov b)=f^*(1,\ov b)=0$, $f\equiv 0$ and therefore $\Phi_U$ is injective. Now suppose $g\in \O_0(u_0^{-1}(U))$ is regular, and consider the map $g_*=g\circ u_0^{-1}:U\to k$, $\ov b\in U\mapsto g([1:\ov b])$; for every $\ov b\in U$, at the neighbourhood $V\subs u_0^{-1}(U)$ of $[1:\ov b]$ by definition there are $H,L\in k[X_0,\ldots,X_n]$, homogeneous of the same degree, such that $g|_V\equiv H/L$, whence $g_*|_{u_0(V)}\equiv H_*/L_*=H(1,\ov x)/L(1,\ov x)$, so $g_*$ is regular, and as obviously we have $\Phi_U(g_*)=g$, $\Phi_U$ is surjective and therefore an isomorphism. As $\Phi:\O_{k^n}\to (u_0)_*\O_0$ is obviously a natural transformation of sheaves, $(u_0,\Phi)$ is an isomorphism of (locally) ringed spaces in $k$-algebras, $(U_0,\O_0)$ is an affine algebraic equivariety, and we conclude that $(\mbb P^n(k),\O)$ is a compact equivariety over $k$.\\
ii) For all $i\neq j$, we have $U_i\cap U_j=\{[\ov a] \in \mbb P^n(k) : a_i\neq 0\ \&\ a_j\neq 0\}$. Under the homeomorphism $u_i:U_i\cong k^n$, we have $u_i(U_i\cap U_j)=\{\ov b=(b_1,\ldots,b_n)\in k^n : b_{j+1}\neq 0\}$ if $j<i$, or $u_i(U_i\cap U_j)=\{\ov b=(b_1,\ldots,b_n)\in k^n : b_j\neq 0\}$ if $j> i$; in both cases, $u_i(U_i\cap U_j)$ is an affine algebraic equivariety by \cite{EQAG}, Lemma 3.15, so $U_i\cap U_j$ is an open affine subvariety of $V$.\\
iii) We want to apply Proposition \ref{MAG.4.27} for the cover $\mbb P^n(k)=\bigcup_i U_i$ and we focus on $i=0$ and $i=1$. The elements $X_1/X_0,\ldots,X_n/X_0$ of $k(X_0,\ldots,X_n)$ are algebraically independent : if $F\in k[X_1,\ldots,X_n]$ and $F[X_1/X_0,\ldots,X_n/X_0]=0$, we have $F^*[X_0,X_1,\ldots,X_n]=X_0^d F[X_1/X_0,\ldots,X_n/X_0]=0$, and therefore $F=F^*[1,X_1,\ldots,X_n]=0$. Thus, the map $k[X_1,\ldots,X_n]\to k[\frac{X_1}{X_0},\ldots,\frac{X_n}{X_0}]\subs k(X_0,\ldots,X_n)$, $X_i\mapsto X_i/X_0$, is an isomorphism of $k$-algebras, and so is $k[X_1,\ldots,X_n]_M\to k[\frac{X_1}{X_0},\ldots,\frac{X_n}{X_0}]_M$, $X_i/1\mapsto (X_i/X_0)/1$, $i=1,\ldots,n$ : by (i) we get a $k$-isomorphism $$\phi:k\left[\frac{X_1}{X_0},\ldots,\frac{X_n}{X_0}\right]_M\cong \O_0(U_0),$$ 
and it is easy to see that $\phi((F/G)[X_1/X_0,\ldots,X_n/X_0]):[\ov a]\in U_0\mapsto (F/G)(a_1/a_0,\ldots,a_n/a_0)$; likewise, we have an isomorphism $$\psi:k\left[\frac{X_0}{X_1},\frac{X_2}{X_1},\ldots,\frac{X_n}{X_1}\right]_M\cong \O_1(U_1)$$ with $\psi((F/G)[X_0/X_1,\ldots,X_n/X_1]) :[\ov a]\in U_1\mapsto (F/G)(a_0/a_1,a_2/a_1,\ldots,a_n/a_1)$. Furthermore, for $U_{01}=U_0\cap U_1=\{[\ov a]\in \mbb P^n(k) : a_0\neq0\ \&\ a_1\neq 0\}$, as $u_0(U_{01})=V_1:=\{\ov b\in k^n : b_1\neq 0\}$ and $\O_{k^n}(V_1)\cong k[X_1,\ldots,X_n]_{\<X_1\>}\cong (k[X_1,\ldots,X_n]_{X_1})_M$ by \cite{EQAG}, Theorem 3.17, we have $$\ms O_0(U_{01})\cong k\left[\frac{X_1}{X_0},\ldots,\frac{X_n}{X_0}\right]_{\<X_1/X_0\>}\cong \left( k\left[\frac{X_1}{X_0},\ldots,\frac{X_n}{X_0}\right]_{X_1/X_0}\right)_M.$$
In general, we also have $k[X_1,\ldots,X_n]_{X_1}\cong k[X_1,\ldots,X_n,1/X_1]\subs k(X_1,\ldots,X_n)$, so $\newline k[\frac{X_1}{X_0},\ldots,\frac{X_n}{X_0}]_{\frac{X_1}{X_0}}\cong k[\frac{X_1}{X_0},\ldots,\frac{X_n}{X_0},\frac{X_0}{X_1}]\subs k(X_0,\ldots,X_n)$, and finally the restriction $\O_0(U_0)\to \O_0(U_{01})$ is isomorphic to the inclusion $$k\left[\frac{X_1}{X_0},\ldots,\frac{X_n}{X_0}\right]_M\subs k\left[\frac{X_1}{X_0},\ldots,\frac{X_n}{X_0},\frac{X_0}{X_1}\right]_M.$$ Likewise, the restriction $\O_1(U_1)\to \O_1(U_{01})$ is isomorphic to the inclusion $$k\left[\frac{X_0}{X_1},\frac{X_2}{X_1},\ldots,\frac{X_n}{X_1}\right]_M\subs k\left[\frac{X_0}{X_1},\frac{X_2}{X_1},\ldots,\frac{X_n}{X_1},\frac{X_1}{X_0}\right]_M$$ and as $\O_0(U_{01})=\O(U_{01})=\O_1(U_{01})$ and $k[\frac{X_1}{X_0},\ldots,\frac{X_n}{X_0},\frac{X_0}{X_1}]=k[\frac{X_0}{X_1},\frac{X_2}{X_1},\ldots,\frac{X_n}{X_1},\frac{X_1}{X_0}]=:A$, these two embeddings respectively correspond to the restrictions of $\O_0(U_0)$ and $\O_1(U_1)$ to $\O(U_{01})$. Now consider the restricted embeddings $\phi_0:k[\frac{X_1}{X_0},\ldots,\frac{X_n}{X_0}]\into A$ and $\psi_0:k[\frac{X_0}{X_1},\frac{X_2}{X_1},\ldots,\frac{X_n}{X_1}]\into A$ : the subset $im(\phi_0)\cup im(\psi_0)$ generates $A$ as a $k$-algebra, so the $k$-algebra morphism $\phi_0\o\psi_0:k[\frac{X_1}{X_0},\ldots,\frac{X_n}{X_0}]\o_k k[\frac{X_0}{X_1},\ldots,\frac{X_n}{X_1}]\to A$, $(f\o g)\mapsto \phi_0(f)\psi_0(g)$ is surjective; it follows that the induced morphism $(\phi_0)_M * (\psi_0)_M:k[\frac{X_1}{X_0},\ldots,\frac{X_n}{X_0}]_M * k[\frac{X_0}{X_1},\ldots,\frac{X_n}{X_1}]_M\cong (k[\frac{X_1}{X_0},\ldots,\frac{X_n}{X_0}]\o_k k[\frac{X_0}{X_1},\ldots,\frac{X_n}{X_1}])_M\to A_M$ of $*$-algebras over $k$ itself is surjective, i.e. $k\{U_0\} * k\{U_1\}\to k\{U_0\cap U_1\}$ is surjective, with the notations of Proposition \ref{MAG.4.27}. As this is true for the same reasons if we replace $0$ and $1$ by any $i,j=0,1,\ldots,n$ (for trivial reasons if $i=j$), by \ref{MAG.4.27}(iii) we conclude that $\mbb P^n(k)$ is separated, i.e. is an algebraic variety over $k$.
\end{proof}
\begin{rem}
i) We could not use, in the proof of (i), the notion of a regular map of concrete equivarieties, because we have to prove that $(U_i,\O_i)$ is indeed an equivariety. \emph{A posteriori}, the homeomorphisms $u_i$ are regular isomorphisms.\\
ii) This generalisation to any commutative field, of the description of the structure of an algebraic variety on the projective spaces over an algebraically closed field, is possible thanks to the theory of $*$-algebras and canonical localisations of \cite{EQAG} and their essential relation to the algebras of sections of regular functions over affine algebraic subvarieties : this confirms the fertility and the strength of the equiresidual point of view. 
\end{rem}

\subsection*{Locally closed subvarieties of separated and algebraic equivarieties}
Let $V=\bigcup_i U_i$ be a separated (resp. algebraic) equivariety, where the $U_i$ are open affine subvarieties, and $Z$ a closed subvariety of $V$ : we have $Z=\bigcup_i Z_i$ if $Z_i=Z\cap U_i$ for each $i$, and as $Z_i$ is open in $Z$, the restriction $\O_{Z_i}$ of $\O_Z$ to $Z_i$ is simply given by $\O_{Z_i}(U\subs Z_i):=\O_Z(U)$ for each $U\subs Z_i$ open (for the induced topology), and we want to show that this defines the structure of a separated (resp. algebraic) equivariety on $Z$. First, as for each $i$, $Z_i$ is closed in $U_i$, by Lemma \ref{CLSVAR} $Z_i$ is naturally an affine equivariety, so $Z$ is an equivariety, and we want to show that it is separated (resp. algebraic).  

\begin{lem}\label{INDCLS}
If $V$ and $W$ are two equivarieties and $X\subs V$, $Y\subs W$ closed subvarieties, then the topology induced on $X\xx Y$ by the Zariski topology on $V\xx W$ is the Zariski topology on $X\xx Y$.
\end{lem}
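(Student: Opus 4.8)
The plan is to reduce to the case where $V$ and $W$ are affine and then glue; throughout we may assume, by Proposition~\ref{CONCEQ}, that all equivarieties are concrete. One inclusion comes for free: by Proposition~\ref{LOCLOS} the inclusions $X\into V$ and $Y\into W$ are regular, so by Corollary~\ref{REGPROD'} the map $X\xx Y\to V\xx W$ is regular, hence continuous, and therefore the Zariski topology on $X\xx Y$ refines the topology it induces from $V\xx W$. It thus suffices to show that every Zariski-closed subset of $X\xx Y$ is closed in the induced topology, and for this I would first handle the affine case.

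Suppose $V$ and $W$ are affine. By Lemma~\ref{CLSVAR}, $X$ and $Y$ are concrete affine equivarieties and the inclusions $i_X\colon X\into V$ and $i_Y\colon Y\into W$ are closed immersions, so Lemma~\ref{MAG.3.22} makes the restriction maps $i_X^*\colon J(V)\to J(X)$ and $i_Y^*\colon J(W)\to J(Y)$ surjective. A Zariski-closed subset of $X\xx Y$ is a zero set $\ms Z_{X\xx Y}(S)$ of functions of the form $(P,Q)\mapsto\sum_i g_i(P)h_i(Q)$ with $g_i\in J(X)$, $h_i\in J(Y)$; lifting the $g_i,h_i$ through the surjections to $\widetilde{g_i}\in J(V)$, $\widetilde{h_i}\in J(W)$ and forming the corresponding functions on $V\xx W$, which restrict on $X\xx Y$ to the original ones, exhibits $\ms Z_{X\xx Y}(S)$ as the trace on $X\xx Y$ of a Zariski-closed subset of $V\xx W$. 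Conversely, restricting a defining function $\sum_i g_i h_i$ ($g_i\in J(V)$, $h_i\in J(W)$) of a Zariski-closed subset of $V\xx W$ to $X\xx Y$ gives $\sum_i (g_i|_X)(h_i|_Y)$, a function of the required form since $g_i|_X\in J(X)$ and $h_i|_Y\in J(Y)$; hence the trace on $X\xx Y$ of a Zariski-closed set is Zariski-closed. So the two topologies agree when $V$ and $W$ are affine.

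For the general case, write $V=\bigcup_i V_i$ and $W=\bigcup_j W_j$ with $V_i,W_j$ open affine subvarieties, and set $X_i=X\cap V_i$, $Y_j=Y\cap W_j$, which are closed in $V_i$ and $W_j$ and hence, by Lemma~\ref{CLSVAR}, open affine subvarieties of $X$ and $Y$. By Lemma~\ref{ZARIND} the Zariski topology of $V\xx W$ induces on $V_i\xx W_j$ its own Zariski topology, and that of $X\xx Y$ induces on $X_i\xx Y_j$ its own; moreover $(X\xx Y)\cap(V_i\xx W_j)=X_i\xx Y_j$ and $\{V_i\xx W_j\}$ is an open cover of $V\xx W$. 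Since the subspace topology relative to an open cover is detected on the members of the cover, a subset of $X\xx Y$ is Zariski-open (resp.\ open for the induced topology) exactly when its trace on each $X_i\xx Y_j$ is Zariski-open there (resp.\ open for the topology induced by $V_i\xx W_j$), and these two conditions coincide by the affine case applied to $X_i\subs V_i$ and $Y_j\subs W_j$. Hence the two topologies on $X\xx Y$ agree. The main obstacle is the affine case, and inside it the surjectivity of $J(V)\to J(X)$ and $J(W)\to J(Y)$: it is what allows one to transport the functions cutting out Zariski-closed sets from one product to the other, and it relies on identifying the inclusions of closed subvarieties as closed immersions so that Lemma~\ref{MAG.3.22} applies.
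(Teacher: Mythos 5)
Your proof is correct and follows essentially the same route as the paper's: settle the affine case by transporting defining functions along the surjective restrictions $J(V)\onto J(X)$, $J(W)\onto J(Y)$, then glue via the affine open cover $\{X_i\xx Y_j\}$ and Lemma \ref{ZARIND}. Your explicit appeal to Lemma \ref{MAG.3.22} for the surjectivity (which the paper merely asserts) and the ``free'' containment via Corollary \ref{REGPROD'} are harmless refinements, not a different argument.
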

\begin{proof}
First, we assume that $V$ and $W$ are affine : by Lemma \ref{CLSVAR}, $X$ and $Y$ are affine as well. Now if $F\subs V\xx W$ is a closed subset, there exists $S\subs J(V)*J(W)$ such that $F=\{(P,Q)\in V\xx W : \forall \sum_i f_i*g_i\in S,\sum_i f_i(P)g_i(Q)=0\}$, so that $F\cap (X\xx Y)=\{(P,Q)\in X\xx Y : \forall \sum_i f_i*g_i\in S,\sum_i f_i|_{X}(P)g_i|_Y(Q)=0\}$, which is clearly closed in $X\xx Y$ for the Zariski topology. Conversely, if $F\subs X\xx Y$ is Zariski closed, it has the form $F=\{(P,Q)\in X\xx Y : \forall \sum_i f_i*g_i\in J(X)*J(Y),\sum_i f_i(P)g_i(Q)=0\}$, and as all $f\in J(X),g\in J(Y)$ are restrictions of elements of $J(V),J(W)$, $F$ is in turn clearly closed for the induced topology.
In the general case, write $V=\bigcup_i V_i$ and $W=\bigcup_j W_j$ as open covers by affine subvarieties. Let $F\subs V\xx W$ be closed for the induced topology : we want to show that $F\cap (X\xx Y)$ is closed for the Zariski topology; as $X\xx Y=\bigcup_{i,j} X_i\xx Y_j$ with $X_i=X\cap V_i$ (open in $X$) and $Y_j=Y\cap W_j$ (open in $Y$) for all $i,j$, it suffices to show that $F\cap (X_i\xx Y_j)$ is Zariski closed for all $(i,j)$, because the subsets $X_i\xx Y_j$ are Zariski open by Lemma \ref{ZARIND} and cover $X\xx Y$. Now for any pair $(i,j)$, we have $F\cap (X_i\xx Y_j)=(X_i\xx Y_j)\cap (V_i\xx W_j)\cap F$, and $F\cap (V_i\xx W_j)$ is certainly closed for the Zariski topology on $V_i\xx W_j$, and by the affine case, we conclude that $F\cap (X_i\xx Y_j)$ is Zariski closed, so that $F\cap (X\xx Y)$ is Zariski closed. Conversely, assume $F\subs X\xx Y$ is Zariski closed; as $X$ and $Y$ are closed, $X\xx Y$ is closed in $V\xx W$ because $(X\xx Y)\cap (V_i\xx W_j)=X_i\xx Y_j$ is Zariski closed in $V_i\xx V_j$ for all $(i,j)$. Now for each pair $(i,j)$, let $F_{ij}=F\cap (X_i\xx X_j)$ : $F_{ij}$ is Zariski closed in $X_i\xx Y_j$, so by the affine case again, it is closed in $V_i\xx W_j$, whence $F$ is closed in $V\xx W$, so $F$ is closed in $X\xx Y$ for the induced topology, and the proof is complete.
\end{proof}

\noindent By Proposition \ref{MAG.4.24} and Lemma \ref{INDCLS}, if $V$ is separated the subset $\Delta_Z=\Delta_V\cap Z^2$ is closed in $Z^2$ for the Zariski topology, so $Z$ is a separated equivariety. We have proved the first part of the

\begin{prop}\label{CLOSALGVAR}
If $V$ is a separated (resp. algebraic) equivariety and $Z$ is a closed subset of $V$, then $(Z,\O_V|_Z)$ is a separated (resp. algebraic) equivariety as well.
\end{prop}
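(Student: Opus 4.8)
The plan is to handle the two cases separately. The separated case has in fact already been settled in the paragraph preceding the statement: from Proposition~\ref{MAG.4.24} (separatedness is equivalent to closedness of the diagonal) and Lemma~\ref{INDCLS} (the Zariski topology on $Z^2=Z\xx Z$ is the one induced from $V^2$), closedness of $\Delta_V$ in $V^2$ gives closedness of $\Delta_Z=\Delta_V\cap Z^2$ in $Z^2$, and Proposition~\ref{MAG.4.24} applied to $Z$ then yields that $Z$ is separated. That $(Z,\O_V|_Z)$ is an equivariety at all is Proposition~\ref{LOCLOS} (or directly: writing $V=\bigcup_i U_i$ with the $U_i$ affine open, each $Z_i=Z\cap U_i$ is closed in the affine $U_i$, hence affine by Lemma~\ref{CLSVAR}, and $Z=\bigcup_i Z_i$ is an open affine cover of $Z$).

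For the algebraic case, suppose $V$ is separated and compact. By the first part $Z$ is separated, so it remains only to show that $Z$ is compact, which is purely topological: given any cover $Z=\bigcup_\alpha W_\alpha$ by subsets open for the induced topology, write $W_\alpha=Z\cap O_\alpha$ with each $O_\alpha$ open in $V$; then $\{O_\alpha\}_\alpha\cup\{V\setminus Z\}$ is an open cover of $V$, which admits a finite subcover by compactness of $V$, and the members of that subcover lying among the $O_\alpha$ give a finite subfamily of the $W_\alpha$ covering $Z$. Hence $Z$ is compact, and being also separated, it is an algebraic equivariety.

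There is essentially no obstacle here: the only points demanding minor care are that the notion of ``compact'' in play (from \cite{EQAG}) is quasi-compactness, for which the closed-subspace argument above is valid with no separation hypothesis, and the preliminary observation, recalled above, that $(Z,\O_V|_Z)$ already carries an equivariety structure. Accordingly I would keep the write-up to a couple of lines, invoking the first part for separatedness and the standard fact that a closed subspace of a compact space is compact.
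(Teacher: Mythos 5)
Your proof is correct and, for the separated case, follows exactly the paper's route: the paragraph preceding the statement in the paper derives closedness of $\Delta_Z=\Delta_V\cap Z^2$ from Proposition \ref{MAG.4.24} and Lemma \ref{INDCLS}, after first noting (via Lemma \ref{CLSVAR} applied to each $Z_i=Z\cap U_i$) that $Z$ carries an equivariety structure. The only divergence is in how you obtain compactness in the algebraic case: the paper restricts a finite affine open cover $V=\bigcup_i V_i$ to a finite affine open cover $Z=\bigcup_i Z_i$ and concludes that $Z$ is compact as a finite union of compact (affine) open subsets, whereas you give the purely topological argument that a closed subspace of a quasi-compact space is quasi-compact by adjoining $V\setminus Z$ to a lifted cover. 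Both are valid; yours is marginally more self-contained (it does not rely on the affine structure of the cover at all), while the paper's version stays uniform with the bookkeeping of affine covers it has been carrying since the start of the subsection. Your closing remark that the relevant notion is quasi-compactness, so no separation hypothesis is needed for the closed-subspace argument, is the right point of care and is correct.
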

\begin{proof}
By what precedes, the separated case is established, so we assume that $V$ is algebraic, i.e. separated and compact. We know that $Z$ is separated, and keeping the same notations as before, we may assume by compactness that the affine open cover $V=\bigcup_i V_i$ is finite, and thus the affine open cover $Z=\bigcup_i Z_i$ is finite as well, so $Z$ is compact, therefore it is an algebraic equivariety.
\end{proof}

\begin{thm}\label{LOCSEP}
If $(V,\O_V)$ is a separated (resp. algebraic) equivariety and $S\subs V$ is a locally closed subset, then $(S,\O_V|_S)$ is a separated (resp. algebraic) equivariety as well.
\end{thm}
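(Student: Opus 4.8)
The plan is to reduce the statement to the open and closed cases, using that a locally closed $S$ sits as a relatively closed subset of an open subvariety. As usual (Proposition \ref{CONCEQ}) we may assume that $V$ is concrete. Write $S=O\cap X$ with $O\subs V$ open and $X\subs V$ closed; then $S$ is relatively closed in $O$, and by Lemma \ref{DBRES} we have $(S,\O_V|_S)=(S,(\O_V|_O)|_S)$, with $(O,\O_V|_O)$ and $(S,\O_V|_S)$ both equivarieties by Proposition \ref{LOCLOS}. So it remains to establish separatedness, and, in the algebraic case, compactness of $S$.

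For separatedness I would argue directly, exactly as in Proposition \ref{QAFFALGVAR}: the inclusion $i:S\into V$ is regular by Proposition \ref{LOCLOS} and is injective, so if $\phi_1,\phi_2:U\to S$ are regular maps with $U$ an affine algebraic equivariety, then $i\phi_1,i\phi_2:U\to V$ are regular and $\{P\in U:\phi_1(P)=\phi_2(P)\}=\{P\in U:i\phi_1(P)=i\phi_2(P)\}$, which is closed in $U$ because $V$ is separated. Hence $S$ is separated, which already settles the \og separated" half of the theorem.

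For the \og algebraic" half, the substantive point is compactness of $S$: an open subset of a compact space need not be compact, so we must exploit the fact that affine algebraic equivarieties are \og Noetherian". Assuming $V$ is algebraic, write $V=\bigcup_{j=1}^{n} V_j$ as a \emph{finite} union of affine open subvarieties (using compactness of $V$). Then $O=\bigcup_{j=1}^{n}(O\cap V_j)$, and each $O\cap V_j$ is an open subset of the affine algebraic equivariety $V_j$, hence a finite union of affine open subvarieties (this is the fact used in the proof of Lemma \ref{OPENBASIS}); since affine algebraic equivarieties are compact (\cite{EQAG}), each such open subset is compact, so each $O\cap V_j$ is compact, and therefore $O$ is compact as a finite union of compact subsets. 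Finally $S$ is relatively closed in $O$, and a closed subset of a quasi-compact space is quasi-compact, so $S$ is compact; combined with the separatedness established above, $(S,\O_V|_S)$ is an algebraic equivariety.

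The main obstacle I anticipate is precisely this compactness step: one has to be careful that passing to an open subset does not destroy compactness, which is what forces the appeal to the local Noetherian character of affine algebraic equivarieties (open subsets of affine subvarieties of $k^n$ being finite unions of basic, hence affine, open subsets). Everything else — identifying the restricted structural sheaf through Lemma \ref{DBRES}, knowing that $(S,\O_V|_S)$ is an equivariety with regular inclusion via Proposition \ref{LOCLOS}, and deducing separatedness from the injective regular map into $V$ — is routine given the results already proved.
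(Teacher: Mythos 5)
Your proof is correct, but it routes the reduction in the opposite direction from the paper. The paper decomposes $S=F\cap O$ \emph{closed-then-open}: it first passes to the closed subvariety $F$ via Proposition \ref{CLOSALGVAR} (whose proof rests on the diagonal criterion of Proposition \ref{MAG.4.24} and the topological comparison Lemma \ref{INDCLS}), and then treats $S$ as an open subset of $F$, re-running the quasi-affine argument of Proposition \ref{QAFFALGVAR} to get the equivariety structure, separatedness and compactness. You instead go \emph{open-then-closed}: you take the equivariety structure and the regular inclusion $i:S\into V$ directly from Proposition \ref{LOCLOS}, prove separatedness of $S$ in one stroke from the definition (the injective regular $i$ pulls the closedness of $\{i\phi_1=i\phi_2\}$ back to $\{\phi_1=\phi_2\}$ --- the same trick the paper uses in Proposition \ref{QAFFALGVAR}, but applied to a general locally closed $S$), and get compactness by showing $O$ is compact (finite affine cover of $V$ plus the fact, recorded in the proof of Lemma \ref{OPENBASIS}, that an open subset of an affine algebraic equivariety is a finite union of affine opens, each compact by \cite{EQAG}) and then using that $S$ is closed in the quasi-compact $O$. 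Your route bypasses Proposition \ref{CLOSALGVAR} and Lemma \ref{INDCLS} entirely and is the more economical one for this theorem; what the paper's route buys is Proposition \ref{CLOSALGVAR} as a standalone statement and a proof that stays within the diagonal/product framework that organises Section \ref{SEPALG}. Your separatedness argument in fact proves slightly more: any subvariety whose inclusion into a separated equivariety is regular and injective is itself separated.
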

\begin{proof}
As before we assume that $V$ is concrete, and we first assume that $S$ is open in $V$. Write $V=\bigcup_i U_i$ with $U_i$ an affine open subvariety for each $i$ and let $S_i=U_i\cap S$ for each $i$ : each $S_i$ is open in $S$ and a quasi-affine algebraic equivariety; by Proposition \ref{QAFFALGVAR}, $S_i$ is an algebraic equivariety, and therefore $S$ is a union of affine algebraic equivarieties, each open in $S$, so $S$ is an equivariety over $k$. As for separation, as in \ref{QAFFALGVAR} if $\phi_1,\phi_2:U\to S$ are regular maps with $U$ affine, and $i:S\into V$ is the inclusion, the set $\{u\in U : \phi_1(u)=\phi_2(u)\}=\{u\in U : i\phi_1(u)=i\phi_2(u)\}$ is closed because $V$ is separated, so $S$ itself is separated. Taking a finite affine open cover of $V$ if $V$ is algebraic, we see that $S$ is compact in this case, so itself an algebraic equivariety. 
In general, i.e. if $S$ is locally closed and $V$ is separated (resp. algebraic), there exist a closed $F\subs V$ and an open $O\subs V$ such that $S=F\cap O$; by Proposition \ref{CLOSALGVAR}, $(F,\O|_F)$ is a separated (resp. algebraic) equivariety, and by the open case, 
$(S,(\O|_F)|_S)$ is also a separated (resp. algebraic) equivariety, whence the result, because $(\O|_F)|_S=\O_V|_S$ by Lemma \ref{DBRES}. 
\end{proof}

\noindent As in \og absolute" algebraic geometry (i.e. over algebraically closed fields), we define a \emph{quasi-projective equivariety} as a locally ringed space in $k$-algebras which is isomorphic to a locally closed subvariety of a projective space over $k$ (we do \emph{not} restrict ourselves to irreducible equivarieties). All what precedes gives us the following

\begin{cor}
Every quasi-projective variety over $k$ is an algebraic equivariety. 
\end{cor}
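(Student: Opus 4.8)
The plan is to simply chain together the two main results established in the final two sections. By the definition given just above the statement, a quasi-projective equivariety $V$ over $k$ is a locally ringed space in $k$-algebras which is isomorphic, as such, to a locally closed subvariety $(S,\O_{\mbb P^n(k)}|_S)$ of some projective space $\mbb P^n(k)$ over $k$. So the first step is to invoke Theorem \ref{PROJSP}, which tells us that $\mbb P^n(k)$, equipped with its sheaf $\O$ of regular functions, is an \emph{algebraic} equivariety over $k$ (in particular separated and compact).

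Next I would apply Theorem \ref{LOCSEP} with the separated-and-algebraic equivariety $V:=\mbb P^n(k)$ and the locally closed subset $S$: this gives directly that $(S,\O|_S)$ is again an algebraic equivariety over $k$. Finally, since $V$ is by hypothesis isomorphic to $(S,\O|_S)$ in the category of locally ringed spaces in $k$-algebras — equivalently, by Proposition \ref{CONCEQ}, via a regular isomorphism of concrete equivarieties — and since both separatedness (Definition via pairs of morphisms from affine equivarieties, or equivalently via closedness of the diagonal, Proposition \ref{MAG.4.24}) and compactness are preserved under isomorphism of equivarieties, $V$ is itself an algebraic equivariety.

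There is essentially no obstacle here: the corollary is a pure assembly statement, and the only point worth making explicit is that ``being an algebraic equivariety'' is an intrinsic property, invariant under isomorphism of equivarieties, so that it transports from the concrete model $(S,\O|_S)$ to any $V$ isomorphic to it. The substantive work has already been done in Theorem \ref{PROJSP} (the structure-theoretic description of $\mbb P^n(k)$, resting on the duality of \cite{EQAG} and the use of canonical localisations) and in Theorem \ref{LOCSEP} (stability of the separated/algebraic properties under passage to locally closed subvarieties).
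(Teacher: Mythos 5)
Your proof is correct and follows exactly the route the paper intends (the paper gives no explicit argument beyond ``All what precedes gives us the following'', but the intended chain is precisely Theorem \ref{PROJSP} followed by Theorem \ref{LOCSEP}, plus invariance under isomorphism). Your added remark that the algebraic-equivariety property transports along isomorphisms is a reasonable explicit supplement to what the paper leaves implicit.
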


\section{Conclusions}
The basic theory developped in \cite{EQAG} proves to be robust at the geometric level : we have been able, quite straightforwardly, to generalise a theory of algebraic varieties over an algebraically closed field, as expounded in \cite{MAG}, Chapter 4, to a theory of algebraic equivarieties over an arbitrary (commutative) field. Several things are in fact naturally valid in the general context of equivarieties, and the basic theory of projective spaces and quasi-projective (equi)varieties fits neatly into this framework. This justifies to carry on the program evoked at the end of \cite{EQAG}, and for a start we will focus on the local study of algebraic equivarieties, the definition of an equiresidual version of étale regular morphisms and its connexion with the usual theory of étale ring morphisms and henselisation, and the possible extension of a notion of integral dependence in connexion with a possible notion of a normal algebraic equivariety.\\

The original theory stemed from considerations about fundamental connexions between positive model theory and algebraic geometry. The theory presented in \cite{EQAG} and here will allow us to fully develop in \cite{PAG1} the first stage of \emph{positive algebraic geometry}, an interpretation of equiresidual algebraic geometry in the context of positive logic, where we will generalise some \og definable" aspects of algebraic geometry over algebraically closed fields in certain first order theories of fields, using for instance the logical setting in order to give systematic axiomatisations of classes of algebraic objets - like the counterparts of $*$-algebras, coordinate rings or function fields - much in the spirit of universal algebra and coherent logic. On this quasi-axiomatisable version of (equi)algebraic geometry we we will hopefully build, thanks to the tools of coherent logic, a general theory of \og fields with an open definable structure", i.e. in which a certain analogue of Tarski-Chevalley-Macintyre's theorem, valid in algebraically closed fields, real closed fields and $p$-adically closed fields (see \cite{BEL2} for instance), holds in connexion with some interesting infinitesimal properties.

\vspace{0.5cm}
\textit{\footnotesize E-mail adress} : \texttt{\footnotesize jeb.math@gmail.com}
\end{document}